  \theoremstyle{plain}
    \newtheorem{theorem}{Theorem}[section]
  \newtheorem{lemma}[theorem]{Lemma}
\theoremstyle{definition}
  \newtheorem{definition}[theorem]{Definition}
\newcommand{\excise}[1]{}
\newcommand{\rdots}{\mathinner{%
  \mkern1mu\raise1pt\hbox{.}%
  \mkern2mu\raise4pt\hbox{.}%
  \mkern2mu\raise7pt\vbox{\kern7pt\hbox{.}}\mkern1mu}}
\numberwithin{equation}{section}
\def\ZZ{{\mathbb Z}}
\newcommand{\cellsize}{15}
\newlength{\cellsz} \setlength{\cellsz}{\cellsize\unitlength}
\newsavebox{\cell}
\sbox{\cell}{\begin{picture}(\cellsize,\cellsize)
\put(0,0){\line(1,0){\cellsize}}
\put(0,0){\line(0,1){\cellsize}}
\put(\cellsize,0){\line(0,1){\cellsize}}
\put(0,\cellsize){\line(1,0){\cellsize}}
\end{picture}}
\newcommand\cellify[1]{\def\thearg{#1}\def\nothing{}%
\ifx\thearg\nothing
\vrule width0pt height\cellsz depth0pt\else
\hbox to 0pt{\usebox{\cell} \hss}\fi%
\vbox to \cellsz{
\vss
\hbox to \cellsz{\hss$#1$\hss}
\vss}}
\newcommand\tableau[1]{\vtop{\let\\\cr
\baselineskip -16000pt \lineskiplimit 16000pt \lineskip 0pt
\ialign{&\cellify{##}\cr#1\crcr}}}
\newcommand{\ww}{t}
\newcommand{\tw}{r}
\newcommand{\kellsize}{18}
\newlength{\kellsz} \setlength{\kellsz}{\kellsize\unitlength}
\newsavebox{\kell}
\sbox{\kell}{\begin{picture}(\kellsize,\kellsize)
\put(0,0){\line(1,0){\kellsize}}
\put(0,0){\line(0,1){\kellsize}}
\put(\kellsize,0){\line(0,1){\kellsize}}
\put(0,\kellsize){\line(1,0){\kellsize}}
\end{picture}}
\newcommand\kellify[1]{\def\thearg{#1}\def\nothing{}%
\ifx\thearg\nothing
\vrule width0pt height\kellsz depth0pt\else
\hbox to 0pt{\usebox{\kell} \hss}\fi%
\vbox to \kellsz{
\vss
\hbox to \kellsz{\hss$#1$\hss}
\vss}}
\newcommand\ktableau[1]{\vtop{\let\\\cr
\baselineskip -16000pt \lineskiplimit 16000pt \lineskip 0pt
\ialign{&\kellify{##}\cr#1\crcr}}}
\font\co=lcircle10
\def\jr{\smash{\raise2pt\hbox{\co \rlap{\rlap{\char'005} \char'007}}
               \raise6pt\hbox{\rlap{\vrule height5pt}}
               \raise2pt\hbox{\rlap{\hskip4pt \vrule height0.4pt depth0pt
                width5.7pt}}
               \raise2pt\hbox{\rlap{\hskip-9.5pt \vrule height.4pt depth0pt
                width6.2pt}}
               \lower6pt\hbox{\rlap{\vrule height4.5pt}}}}
\def\rj{\smash{\raise2pt\hbox{\co \rlap{\rlap{\char'004} \char'006}}
               \raise6pt\hbox{\rlap{\vrule height5pt}}
               \raise2pt\hbox{\rlap{\hskip4pt \vrule height0.4pt depth0pt
                width5.7pt}}
               \raise2pt\hbox{\rlap{\hskip-9.5pt \vrule height.4pt depth0pt
                width6.2pt}}
               \lower6pt\hbox{\rlap{\vrule height4.5pt}}}}
\def\je{\smash{\raise2pt\hbox{\co \rlap{\rlap{\char'005}
                \phantom{\char'007}}}\raise6pt\hbox{\rlap{\vrule height5pt}}
               \raise2pt\hbox{\rlap{\hskip-9.5pt \vrule height.4pt depth0pt
                width6.2pt}}}}
\def\ej{\smash{\raise2pt\hbox{\co \rlap{\rlap{\char'004}\phantom{\char'006}}}
               \raise2pt\hbox{\rlap{\hskip-9.5pt \vrule height.4pt depth0pt
                width6.2pt}}
               \lower6pt\hbox{\rlap{\vrule height4.5pt}}}}
\def\er{\smash{\raise2pt\hbox{\co \rlap{\rlap{\phantom{\char'005}} \char'007}}
               \raise2pt\hbox{\rlap{\hskip4pt \vrule height0.4pt depth0pt
                width5.7pt}}
               \lower6pt\hbox{\rlap{\vrule height4.5pt}}}}
\def\re{\smash{\raise2pt\hbox{\co \rlap{\rlap{\phantom{\char'004}} \char'006}}
               \raise6pt\hbox{\rlap{\vrule height5pt}}
               \raise2pt\hbox{\rlap{\hskip4pt \vrule height0.4pt depth0pt
                width5.7pt}}}}
\def\+{\smash{\lower6pt\hbox{\rlap{\vrule height17pt}}
                \raise2pt
                \hbox{\rlap{\hskip-9pt \vrule height.4pt depth0pt
                width18.7pt}}}}
\def\hor{\smash{\raise2pt\hbox{\rlap{\hskip-9.5pt \vrule height.4pt depth0pt
                width19.2pt}}}}
\def\ver{\smash{\lower6pt\hbox{\rlap{\vrule height17pt}}}}
\def\ho{\smash{\hbox{\rlap{\vrule height5pt}}
                \raise2pt
                \hbox{\rlap{\hskip-9pt \vrule height.4pt depth0pt
                width18.7pt}}}}
\def\textcross{\ \smash{\lower4pt\hbox{\rlap{\hskip4.15pt\vrule height14pt}}
                \raise2.8pt\hbox{\rlap{\hskip-3pt \vrule height.4pt depth0pt
                width14.7pt}}}\hskip12.7pt}
\def\textelbow{\ \hskip.1pt\smash{\raise2.75pt%
                \hbox{\co \hskip 4.15pt\rlap{\rlap{\char'004} \char'006}
                \lower6.8pt\rlap{\vrule height3.5pt}
                \raise3.6pt\rlap{\vrule height3.5pt}}
                \raise2.8pt\hbox{%
                  \rlap{\hskip-7.15pt \vrule height.4pt depth0pt width3.5pt}%
                  \rlap{\hskip4.05pt \vrule height.4pt depth0pt width3.5pt}}}
                \hskip8.7pt}
\begin{document}
\pagestyle{plain}
\title{ On $(t,r)$ Broadcast Domination Numbers of Grids}
\author{David Blessing}
\address{Department of Mathematics\\
Florida Gulf Coast University \\ Fort Myers, FL 33965}
\email{dcblessi@eagle.fgcu.edu}
\author{Erik Insko}
\address{Department of Mathematics\\
Florida Gulf Coast University \\ Fort Myers, FL 33965}
\email{einsko@fgcu.edu}
\author{Katie Johnson}
\address{Department of Mathematics\\
Florida Gulf Coast University \\ Fort Myers, FL 33965}
\email{kjohnson@fgcu.edu}
\author{Christie Mauretour} 
\address{Department of Mathematics\\
Florida Gulf Coast University \\ Fort Myers, FL 33965}
\email{cjmauret@eagle.fgcu.edu}
\subjclass[2000]{05C69; 05C12;	05C30; 68R05; 68R10}
\keywords{Domination Number, Graph Theory, Distance Domination Number, (t,r) Broadcast Domination Number, Grid Graphs }

\date{\today}

\begin{abstract}
The domination number of a graph $G = (V,E)$ is the minimum cardinality of any subset $S \subset V$ such that  every vertex in $V$ is in $S$ or adjacent to an element of $S$.
Finding the domination numbers of $m$ by $n$ grids was an open problem for nearly 30 years and was finally solved in 2011 by Goncalves, Pinlou, Rao, and Thomass\'e.  
Many variants of domination number on graphs have been defined and studied, but exact values have not yet been obtained for grids.
We will define a family of domination theories parameterized by pairs of positive integers $(t,r)$ where $1 \leq r \leq t$ which generalize 
domination and distance domination theories for graphs.
We call these domination numbers the $(t,r)$ broadcast domination numbers.  
We give the exact values of $(t,r)$ broadcast domination numbers for small grids, and 
we identify upper bounds for the $(t,r)$ broadcast domination numbers for large grids and conjecture that these bounds are tight for sufficiently large grids.
\end{abstract}

\maketitle

\section{Introduction}
A \emph{dominating set} in a graph $G$ is a subset of vertices $S$ such that every vertex
in $G$ is either in $S$ or is adjacent to some vertex in $S$. 
The \emph{domination number} of $G$, denoted $\gamma(G)$, is
the minimum size of a dominating set of $G$.  
For a comprehensive study of domination and its variants on graphs see the two texts by Haynes, Hedetniemi and Slater \cite{HayHedSla98,HayHedSla98b}. 
In this paper we focus on generalizations of domination number for grid graphs.  

Finding the specific domination number for any $m \times n$ grid graph proved to be a challenging task. 
Indeed it was an open problem for over a quarter century.
In 1984, Jacobson and Kinch \cite{JacKin84} started the investigation by publishing the specific values of $\gamma(G_{2,n})$, $\gamma(G_{3,n}),$ and $\gamma(G_{4,n})$. 
In 1993, Chang, Clark, and Hare \cite{ChaCla93} extended these results by finding the exact values of $\gamma(G_{5,n})$ and $\gamma(G_{6,n})$. 
In his Ph.D. thesis, Chang \cite{Cha92} constructed efficient dominating sets proving that when $m$ and $n$ are greater than $8$, the domination number $\gamma(G_{m,n})$ is bounded by the formula
\begin{equation} \gamma(G_{m,n}) \leq \left \lfloor \frac{(n+2)(m+2)}{ 5}\right \rfloor - 4 . \end{equation} \label{chang} Chang also conjectured that 
equality holds in Equation (\ref{chang}) when $n \geq m \geq 16$.  
 
In an effort to confirm Chang's conjecture, a number of mathematicians and computer scientists began exhaustively computing the values of $\gamma(G_{m,n})$.
In 1995, Hare, Hare, and  Hedetniemi \cite{HarHarHed86} developed a polynomial time algorithm to compute $\gamma(G_{m,n} )$ when $m$ is fixed.  
Spalding's 1998 Ph.D. thesis \cite{Spa98} computed $\gamma(G_{m,n})$ for $m \leq 19$ and all $n$, and  Alanko, Crevals, Isopoussu, \"Ostergard, and Petterson \cite{Ala11} computed $\gamma(G_{m,n})$ for $m,n \leq 29$ in addition to $m \leq 27$ and $n \leq 1000$.

In 2004, Guichard \cite{Gui04} proved the following bound for $n \ge m \ge 16$: 
\[ \gamma(G_{m,n}) \geq \left \lfloor \frac{(n+2)(m+2)}{ 5}\right \rfloor - 9 . \]
Finally in 2011, Gon\c{c}alves, Pinlou, Rao, and Thomass\'{e} \cite{GonPinRaoTho11} were able to adapt Guichard's ideas to confirm Chang's conjecture for all $n$.  
Their proof uses a combination of analytic and computer aided techniques for the large cases $(n \geq m \geq 24)$ and exhaustive calculations for the smaller ones.

The concept of graph domination has been generalized in over 80 ways including distance domination, $R$-domination, double-domination and $(k,r)$-domination to name just a few 
\cite{Sla76, Hen98, HarHay00, JotPusSugSwa2011}.  Relatively little is known about these other domination theories in grid graphs, but in 2013 Fata, Smith, and Sundaram 
defined an efficient algorithm for constructing dominating sets
that give a loose upper bound on the distance domination number of grids \cite[Theorem V.10]{FatSmiSun13}.    
In this paper we define a graph invariant called the $(\ww,\tw)$ broadcast domination number that generalizes the theories of domination and distance domination.  
We compute the exact values of these $(\ww,\tw)$ broadcast domination numbers on grid graphs $G_{m,n}$ for small values of $\ww$, $\tw$, and $m$, and any value of $n$. 
Then we construct dominating sets that give upper bounds for large $m \times n$ grids and small values of $\ww$ and $ \tw$. We conjecture that these bounds are tight for sufficiently large grids.

The rest of this section contains an introduction to broadcast domination.   
In Section~\ref{smallcases} we prove formulas for $(2,2)$ broadcast domination numbers of $G_{m,n}$ when $m = 3,4,$ or $5$, and $(3,1)$, and $(3,2)$ broadcast domination numbers of $G_{m,n}$ when $m = 3$ or $4$.
In Section~\ref{section:theorems} we construct sets that give upper bounds on the $(2,2)$, $(3,1)$, $(3,2)$, and $(3,3)$  broadcast domination numbers of $m \times n$ grids. 
Finally, in Section~\ref{section:algorithm} we give a table of the $(2,2)$ and $(3,1)$ broadcast domination numbers for $G_{m,n}$ when $1 \leq m \leq n \leq 10$ and list some open problems in $(\ww,\tw)$ broadcast domination.

\subsection{$(\ww,\tw)$  Broadcast Domination} \label{subsection:broadcast_domination}
In this section we define the concept of $(\ww,\tw)$  broadcast domination. 
For two vertices $u$ and $v$ in $G$, let $d(u,v)$ denote the distance, or fewest number of edges, between $u$ and $v$ in $G$.  
We say a vertex $v \in G$ is a \emph{broadcasting vertex of transmission strength} $\ww$
if it transmits a signal of strength $ \ww - d(u,v)$ to every vertex $u$ with $d(u,v) < \ww$.  
Given a broadcasting vertex $v$ of transmission strength $\ww$, we call all vertices with $d(u,v) < \ww$ the \emph{broadcast neighborhood} of $v$, denoted $N_t(v)$.  Note that a broadcasting vertex also broadcasts a signal of strength $t$ to itself, and that we are including $v$ in $N_t(v)$.

We will call a set $S$ of broadcasting vertices of strength $t$ a \emph{broadcasting set}.
We define the \emph{reception strength} $\tw(u)$ at a vertex $u \in G$ to be the sum of the transmission strengths from all surrounding broadcasting vertices, i.e.
\[ \tw(u) = \sum_{\substack{v \in S \\ u\in N_t(v)}} \left (  \ww-d(u,v) \right ) .\]

\begin{definition}
A set $S \subset V$ is called a \emph{$(\ww,\tw)$  broadcast dominating set} if every vertex $v \in V$ has a reception strength $\tw(v)$ satisfying $\tw(v) \geq \tw$.
\end{definition}

\begin{definition} 
The \emph{$(\ww,\tw)$  broadcast domination number} of a graph is the minimum size of any $(\ww,\tw)$  broadcast dominating set.
We denote this number by $\gamma_{\ww,\tw}(G)$.
\end{definition}

Figure \ref{G55} shows the reception strengths (recorded in red) for two different broadcasting sets (highlighted in blue) in the grid $G_{5,5}$ when $(\ww, \tw) = (3,2)$.  
On the right, it also shows a minimum (3,2) broadcast dominating set for $G_{5,5}$.

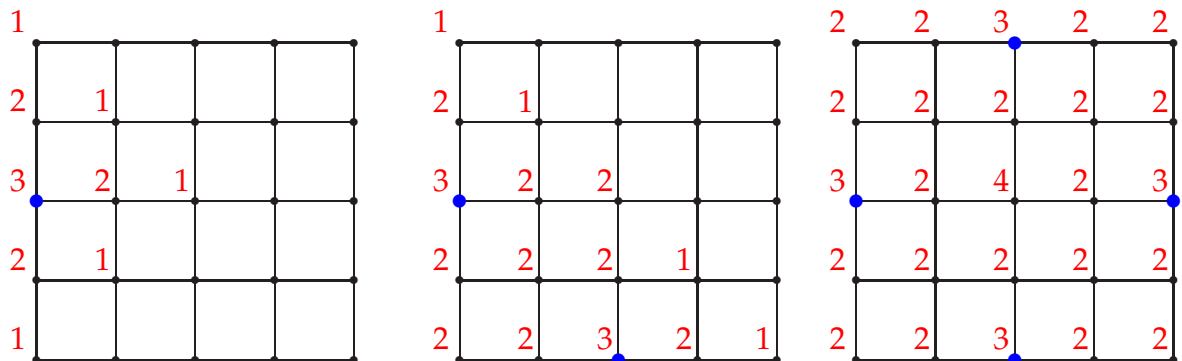
\begin{figure}[H]
\centering
\begin{picture}(450,125)(0,0)
	\multiput(10,0)(30,0){5}{\circle*{3}}
	\multiput(10,30)(30,0){5}{\circle*{3}}
	\multiput(10,60)(30,0){5}{\circle*{3}}
	\multiput(10,90)(30,0){5}{\circle*{3}}
	\multiput(10,120)(30,0){5}{\circle*{3}}
	\multiput(10,0)(0,30){5}{\line(1,0){120}}
	\multiput(10,0)(30,0){5}{\line(0,1){120}}
	\put(10,60){\color{blue}{\circle*{5}}}
	\put(0,64){\color{red}{3}}
	\put(0,94){\color{red}{2}}
	\put(0,34){\color{red}{2}}
	\put(32,64){\color{red}{2}}
	\put(62,64){\color{red}{1}}
	\put(0,4){\color{red}{1}}
	\put(32,34){\color{red}{1}}
	\put(0,124){\color{red}{1}}
	\put(32,94){\color{red}{1}}
	\multiput(170,0)(30,0){5}{\circle*{3}}
	\multiput(170,30)(30,0){5}{\circle*{3}}
	\multiput(170,60)(30,0){5}{\circle*{3}}
	\multiput(170,90)(30,0){5}{\circle*{3}}
	\multiput(170,120)(30,0){5}{\circle*{3}}
	\multiput(170,0)(0,30){5}{\line(1,0){120}}
	\multiput(170,0)(30,0){5}{\line(0,1){120}}
	\put(170,60){\color{blue}{\circle*{5}}}
	\put(230,0){\color{blue}{\circle*{5}}}
	\put(222,4){\color{red}{3}}
	\put(222,34){\color{red}{2}}
	\put(252,4){\color{red}{2}}
	\put(282,4){\color{red}{1}}
	\put(252,34){\color{red}{1}}
	\put(192,4){\color{red}{2}}
	\put(160,64){\color{red}{3}}
	\put(160,94){\color{red}{2}}
	\put(160,34){\color{red}{2}}
	\put(192,64){\color{red}{2}}
	\put(222,64){\color{red}{2}}
	\put(160,4){\color{red}{2}}
	\put(192,34){\color{red}{2}}
	\put(160,124){\color{red}{1}}
	\put(192,94){\color{red}{1}}
	\multiput(320,0)(30,0){5}{\circle*{3}}
	\multiput(320,30)(30,0){5}{\circle*{3}}
	\multiput(320,60)(30,0){5}{\circle*{3}}
	\multiput(320,90)(30,0){5}{\circle*{3}}
	\multiput(320,120)(30,0){5}{\circle*{3}}
	\multiput(320,0)(0,30){5}{\line(1,0){120}}
	\multiput(320,0)(30,0){5}{\line(0,1){120}}
	\put(320,60){\color{blue}{\circle*{5}}}
	\put(380,0){\color{blue}{\circle*{5}}}
	\put(380,120){\color{blue}{\circle*{5}}}
	\put(440,60){\color{blue}{\circle*{5}}}
 	\put(372,4){\color{red}{3}}
	\put(372,34){\color{red}{2}}
	\put(402,4){\color{red}{2}}
	\put(432,4){\color{red}{2}}
	\put(402,34){\color{red}{2}}
	\put(342,4){\color{red}{2}}
	\put(310,64){\color{red}{3}}
	\put(310,94){\color{red}{2}}
	\put(310,34){\color{red}{2}}
	\put(342,64){\color{red}{2}}
	\put(372,64){\color{red}{4}}
	\put(310,4){\color{red}{2}}
	\put(342,34){\color{red}{2}}
	\put(310,124){\color{red}{2}}
	\put(342,94){\color{red}{2}}
        \put(342,124){\color{red}{2}}
        \put(372,124){\color{red}{3}}
        \put(372,94){\color{red}{2}} 
        \put(402,124){\color{red}{2}}
        \put(432,124){\color{red}{2}}
        \put(432,94){\color{red}{2}}
	\put(432,64){\color{red}{3}}
	\put(432,34){\color{red}{2}}
        \put(402,64){\color{red}{2}}
        \put(402,94){\color{red}{2}}
\end{picture}
	\caption{Reception strengths and a $(3,2)$  broadcast dominating set of $G_{5,5}$} \label{G55}
\end{figure}

The family of $(\ww,\tw)$ broadcast domination theories generalize several well-known domination theories of interest.  For instance,
when $(\ww,\tw)=(2,1)$ every element of $G$ is either in a broadcast dominating set $S$ or is adjacent to $S$.  
So the $(2,1)$  broadcast domination number is precisely the regular domination number, i.e. $\gamma_{2,1}(G_{m,n})=\gamma(G_{m,n})$.
In $(\ww,1)$ broadcast domination theory every vertex of $G$ must be within
distance $\ww-1$ of an element of the dominating set.  Hence $(\ww,1)$ broadcast domination is equivalent to $(\ww-1)$-distance domination theory introduced by Slater \cite{Sla76} 
and studied in grids by Fata, Smith, and Sundaram \cite{FatSmiSun13}. 
Thus the family of $(\ww,\tw)$ broadcast domination theories provides a general framework for studying several domination theories of interest, and in this paper we develop techniques
that can be effectively applied in the study of any $(t,r)$ broadcast domination theory.

\section{ $(\ww,\tw)$ Broadcast Domination in Small Grids} \label{smallcases}

In this section we will find the $(2,2)$ broadcast domination numbers of $G_{m,n}$ when $m$ is $ 3,4,$ or $5$ and $(3,1)$ and $(3,2)$ broadcast domination numbers of $G_{m,n}$ when $m$ is $ 3$ or $4$. 
 In each instance, we give a construction of a ($t,r$) broadcast dominating set, and then we prove that each construction is optimal by showing all smaller sets fail to dominate $G_{m,n}$.

In describing the dominating sets of small grids, we will use the following notation.  
We call a string of integers having the form $c_1$-$c_2$-$\cdots$-$c_k$ a \emph{length-$k$ pattern}.
We say that a subset of vertices in $G_{m,n}$ satisfies the pattern $c_1$-$c_2$-$\cdots$-$c_k$ if it contains $c_1$ 
vertices in the first column, $c_2$ vertices in the second, and $c_j$ vertices in the $j$th column for $1 \leq j \leq k$.  

For instance, the first two sets shown in Figure \ref{patterns13n} are (2,2) broadcast dominating sets for $G_{3,5}$ and $G_{3,6}$, and they satisfy the patterns 
1-2-1-2-1 and 1-2-1-1-2-1 respectively; accordingly, we call these dominating patterns.
It is important to note that not every set satisfying a dominating pattern necessarily dominates $G_{m,n}$.  
For instance, the third set in Figure \ref{patterns13n} also satisfies the length-$6$ pattern 1-2-1-1-2-1 but it does not dominate $G_{3,6}$.
\begin{figure}[H]
\centering
\begin{picture}(390,30)(0,0)

	\multiput(100,0)(0,10){3}{\line(1,0){40}}
	\multiput(100,0)(10,0){5}{\line(0,1){20}}
	\put(100,10){\color{blue}{\circle*{5}}}
	\put(110,0){\color{blue}{\circle*{5}}}
	\put(110,20){\color{blue}{\circle*{5}}}
	\put(120,10){\color{blue}{\circle*{5}}}
	\put(130,20){\color{blue}{\circle*{5}}}
	\put(130,0){\color{blue}{\circle*{5}}}
	\put(140,10){\color{blue}{\circle*{5}}}
	\multiput(160,0)(0,10){3}{\line(1,0){50}}
	\multiput(160,0)(10,0){6}{\line(0,1){20}}
	\put(160,10){\color{blue}{\circle*{5}}}
	\put(170,0){\color{blue}{\circle*{5}}}
	\put(170,20){\color{blue}{\circle*{5}}}
	\put(180,10){\color{blue}{\circle*{5}}}
	\put(190,10){\color{blue}{\circle*{5}}}
	\put(200,0){\color{blue}{\circle*{5}}}
	\put(200,20){\color{blue}{\circle*{5}}}
	\put(210,10){\color{blue}{\circle*{5}}}

	\multiput(230,0)(0,10){3}{\line(1,0){50}}
	\multiput(230,0)(10,0){6}{\line(0,1){20}}
	\put(230,10){\color{blue}{\circle*{5}}}
	\put(240,10){\color{blue}{\circle*{5}}}
	\put(240,0){\color{blue}{\circle*{5}}}
	\put(250,20){\color{blue}{\circle*{5}}}
	\put(260,20){\color{blue}{\circle*{5}}}
	\put(270,0){\color{blue}{\circle*{5}}}
	\put(270,10){\color{blue}{\circle*{5}}}
	\put(280,10){\color{blue}{\circle*{5}}}

\end{picture}
\caption{Patterns for  $G_{3,n}$} \label{patterns13n}
\end{figure}
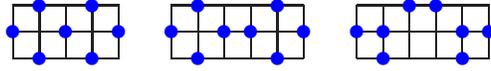

We will often focus on constructing a dominating set a few columns at a time. If a string of integers $d_1$-$d_2$-$\cdots$-$d_l$ appears as a contiguous substring of the integers $c_1$-$c_2$-$\cdots$-$c_k$ 
so that $d_1 = c_{i}, d_2 = c_{i+1}, \ldots,$ and $d_l = c_{i+l-1}$ 
for some $1 \leq i \leq k- l+1$
then we say that $d_1$-$d_2$-$\ldots$-$d_l$ is a subpattern of $c_1$-$c_2$-$\cdots$-$c_k$. 
We call a pattern a \emph{dominating subpattern} if there exists a dominating set that contains that subpattern, or equivalently,  
a dominating subpattern is a pattern that can be extended to a dominating pattern by only adding vertices to the left or right of the subpattern. 
For instance in Figure \ref{fig:extending} the pattern 3-1-2-2-2 gets extended to the (2,2) dominating pattern 2-2-1-3-1-2-2-2-2-2  of $G_{4,10} $ by adding 2-2-1- to the left and -2-2 
on the right.  
 
\begin{figure}[H]
\centering
\begin{picture}(380,30)(0,0)
	\linethickness{.1 mm}
	\multiput(10,0)(0,10){4}{\line(1,0){90}}
	\multiput(10,0)(10,0){10}{\line(0,1){30}}
	\put(40,30){\color{blue}{\circle*{5}}}
	\put(40,20){\color{blue}{\circle*{5}}}
	\put(40,0){\color{blue}{\circle*{5}}}
	\put(50,10){\color{blue}{\circle*{5}}}
	\put(60,30){\color{blue}{\circle*{5}}}
	\put(60,0){\color{blue}{\circle*{5}}}
	\put(70,20){\color{blue}{\circle*{5}}}
	\put(70,0){\color{blue}{\circle*{5}}}
	\put(80,30){\color{blue}{\circle*{5}}}
	\put(80,10){\color{blue}{\circle*{5}}}

	\linethickness{.1 mm}
	\multiput(220,0)(0,10){4}{\line(1,0){90}}
	\multiput(220,0)(10,0){10}{\line(0,1){30}}
	\put(220,20){\color{blue}{\circle*{5}}}
	\put(220,0){\color{blue}{\circle*{5}}}
	\put(230,30){\color{blue}{\circle*{5}}}
	\put(230,0){\color{blue}{\circle*{5}}}
	\put(240,10){\color{blue}{\circle*{5}}}
	\put(250,30){\color{blue}{\circle*{5}}}
	\put(250,20){\color{blue}{\circle*{5}}}
	\put(250,0){\color{blue}{\circle*{5}}}
	\put(260,10){\color{blue}{\circle*{5}}}
	\put(270,30){\color{blue}{\circle*{5}}}
	\put(270,0){\color{blue}{\circle*{5}}}
	\put(280,20){\color{blue}{\circle*{5}}}
	\put(280,0){\color{blue}{\circle*{5}}}
	\put(290,30){\color{blue}{\circle*{5}}}
	\put(290,10){\color{blue}{\circle*{5}}}
	\put(300,30){\color{blue}{\circle*{5}}}
	\put(300,0){\color{blue}{\circle*{5}}}
	\put(310,20){\color{blue}{\circle*{5}}}
	\put(310,0){\color{blue}{\circle*{5}}}

\end{picture}
\caption{A Dominating Subpattern Extended to a Dominating Pattern} \label{fig:extending}
\end{figure}
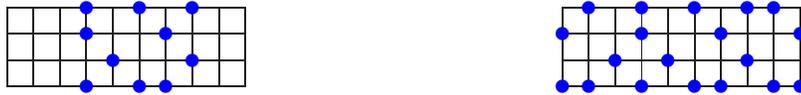

We now identify two key properties of any ($t,r$) dominating subpattern.

\vspace{12pt}
\noindent\textbf{Necessary properties of a ($t,r$) dominating subpattern $c_1$-$c_2$-$\cdots$-$c_k$:} 
\begin{enumerate}
 \item Each vertex in the interior columns of the subpattern corresponding to $c_t,$ $c_{t+1},$ $c_{t+2},$ $\ldots,$ $c_{k-t}$ has reception strength $r(v) \geq r$.
 \item Every vertex $v$ in the columns corresponding to $c_{t-\ell}$ and $c_{k-t+\ell}$ with $0 \leq \ell \leq r-1 $ must have reception strength $r(v) \geq r-\ell$.
\end{enumerate}

The first property is necessary for the pattern to dominate its interior columns, and the second property is necessary for the pattern to be extended to a dominating pattern for the larger grid.
The fact that these properties are necessary for any subpattern to be extended to a dominating pattern follows immediately from the fact that a broadcasting 
vertex of strength $t$ cannot effect the reception strength of a vertex that is more than $t$ columns away from it.
Figure \ref{keyprops} shows a (2,2) and a (3,2) dominating subpattern.  Notice that in the first column and last column of the (2,2) dominating subpattern, each vertex $v$ must
 have reception strength $r(v) \geq 2-1 =1$,
and each vertex $v$ from second through penultimate column of the pattern must have reception strength $r(v) \geq 2$ in order for the pattern to be extended to a dominating pattern.  
Similarly, each vertex $v$ in the third and third to last column in the (3,2) dominating subpattern example 
must have reception strength $r(v) \geq 2$, while each vertex in the second and second to last column must have reception strength $r(v) \geq 2-1 =1$.   
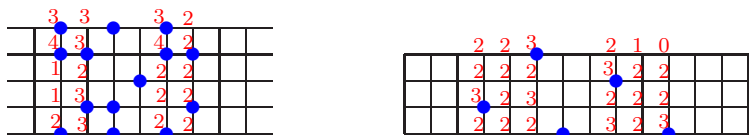
\begin{figure}[H]
\centering
\begin{picture}(330,40)(-50,0)


	\multiput(0,0)(0,10){5}{\line(1,0){100}}
	\multiput(10,0)(10,0){9}{\line(0,1){40}}
        \put(20,0){\color{blue}{\circle*{5}}}
        \put(20,30){\color{blue}{\circle*{5}}}
	\put(20,40){\color{blue}{\circle*{5}}}
        \put(25,12){\color{red}{\tiny{$3$}}}
        \put(26,21){\color{red}{\tiny{$2$}}}
        \put(26,1){\color{red}{\tiny{$3$}}}
        \put(16,2){\color{red}{\tiny{$2$}}}
        \put(16,12){\color{red}{\tiny{$1$}}}
        \put(16,22){\color{red}{\tiny{$1$}}}
        \put(15,32){\color{red}{\tiny{$4$}}}
        \put(15,42){\color{red}{\tiny{$3$}}}
        \put(25,32){\color{red}{\tiny{$3$}}}
        \put(27,42){\color{red}{\tiny{$3$}}}
        \put(30,30){\color{blue}{\circle*{5}}}
	\put(30,10){\color{blue}{\circle*{5}}}
	\put(40,40){\color{blue}{\circle*{5}}}
	\put(40,0){\color{blue}{\circle*{5}}}
	\put(40,10){\color{blue}{\circle*{5}}}
	\put(50,20){\color{blue}{\circle*{5}}}
	\put(60,30){\color{blue}{\circle*{5}}}
	\put(60,40){\color{blue}{\circle*{5}}}
	\put(60,0){\color{blue}{\circle*{5}}}
	\put(70,30){\color{blue}{\circle*{5}}}
	\put(70,10){\color{blue}{\circle*{5}}}
        \put(66,1){\color{red}{\tiny{$2$}}}
        \put(66,12){\color{red}{\tiny{$2$}}}
        \put(66,21){\color{red}{\tiny{$2$}}}
        \put(66,32){\color{red}{\tiny{$2$}}}
        \put(66,41){\color{red}{\tiny{$2$}}}
        \put(55,2){\color{red}{\tiny{$2$}}}
        \put(56,12){\color{red}{\tiny{$2$}}}
        \put(56,21){\color{red}{\tiny{$2$}}}
        \put(55,32){\color{red}{\tiny{$4$}}}
        \put(55,42){\color{red}{\tiny{$3$}}}

	\multiput(150,0)(0,10){4}{\line(1,0){130}}
	\multiput(150,0)(10,0){14}{\line(0,1){30}}
        \put(175,12){\color{red}{\tiny{$3$}}}
        \put(176,1){\color{red}{\tiny{$2$}}}
        \put(176,21){\color{red}{\tiny{$2$}}}
	\put(176,31){\color{red}{\tiny{$2$}}}
        \put(186,11){\color{red}{\tiny{$2$}}}
        \put(186,1){\color{red}{\tiny{$2$}}}
        \put(186,21){\color{red}{\tiny{$2$}}}
	\put(186,31){\color{red}{\tiny{$2$}}}

        \put(196,11){\color{red}{\tiny{$3$}}}
        \put(196,1){\color{red}{\tiny{$2$}}}
        \put(196,21){\color{red}{\tiny{$2$}}}
	\put(196,32){\color{red}{\tiny{$3$}}}
	\put(180,10){\color{blue}{\circle*{5}}}
	\put(200,30){\color{blue}{\circle*{5}}}
	\put(210,0){\color{blue}{\circle*{5}}}
	\put(230,20){\color{blue}{\circle*{5}}}
      \put(226,1){\color{red}{\tiny{$3$}}}
     \put(226,11){\color{red}{\tiny{$2$}}}
     \put(225,22){\color{red}{\tiny{$3$}}}
     \put(226,31){\color{red}{\tiny{$2$}}}
      \put(236,1){\color{red}{\tiny{$2$}}}
     \put(236,11){\color{red}{\tiny{$2$}}}
     \put(236,21){\color{red}{\tiny{$2$}}}
     \put(236,31){\color{red}{\tiny{$1$}}}

    \put(250,0){\color{blue}{\circle*{5}}}
     \put(246,2){\color{red}{\tiny{$3$}}}
     \put(246,11){\color{red}{\tiny{$2$}}}
     \put(246,21){\color{red}{\tiny{$2$}}}
     \put(246,31){\color{red}{\tiny{$0$}}}

\end{picture} 
\caption{ (2,2) dominating subpattern and (3,2) dominating subpattern} \label{keyprops}
\end{figure}

\subsection{The Minimal Pattern Search (MPS) Algorithm}
We describe minimal $(\ww,\tw)$  dominating sets in the following subsections. While it is easy to verify that these sets are dominating sets, 
it is not a priori obvious that these sets are  the smallest possible dominating sets for each $G_{m,n}$. 
To prove that a subpattern is minimal, we have written an algorithm in SAGE, which we call the Minimal Pattern Search (MPS) algorithm.  This MPS algorithm constructs all subsets of $G_{m,n}$ that satisfy a given pattern $c_1$-$c_2$-$c_3$-$\cdots$-$c_k$. 
That is, it constructs all ${m \choose c_1}{m \choose c_2} \cdots {m \choose c_k}$ sets containing $c_1$ vertices in 
the first column, $c_2$ vertices in the second, and so on. 
The program reports any subsets that satisfy the two necessary properties of a $(\ww,\tw)$ dominating subpattern.
If the algorithm does not return any subset, then we conclude that all subsets of $G_{m,n}$ satisfying $c_1$-$c_2$-$c_3$-$\cdots$-$c_k$ are not dominating sets of $G_{m,n}$.
Hence given a possibly minimal pattern  $c_1$-$c_2$-$c_3$-$\cdots$-$c_k$, we simply feed every pattern $d_1$-$d_2$-$d_3$-$\cdots$-$d_k$ with $d_1+d_2+\cdots+d_k < c_1+c_2+ \cdots +c_k$ 
to the MPS algorithm to make sure that no subset of $G_{m,n}$ satisfying the pattern $d_1$-$d_2$-$d_3$-$\cdots$-$d_k$ is a dominating set. 
Both the SAGE code and the calculations are available at {\small \tt https://cloud.sagemath.com/projects/26b983ae-a894-47c0-bd54-c73b071e555c/files/}.

For instance, Figure \ref{patterns3n} shows a (2,2) broadcast dominating set for $G_{3,n}$ that follows the pattern 1-2-1-1-2-1-1-2-1-1-$\cdots$-2-1.  
To prove that this pattern is a minimal dominating pattern for $3 \times n$ grids, we feed the MPS program the pattern $2$-$1$-$1$-$1$.  
It runs through all ${3 \choose 2}{3 \choose 1}{3 \choose 1}{3 \choose 1}$ sets, and shows that no set containing 
four columns with the pattern 2-1-1-1 can satisfy the first property of a dominating subpattern.  
Hence the pattern 2-1-1-1 cannot be part of a dominating set for $G_{m,n}$. 
 While this example was small enough to check by hand, the patterns in $4 \times n$ and $5 \times n$ grids get out of hand very quickly.

\subsection{(2,2) domination of 3 by n grids}
We begin by describing a dominating set $D_n$ for the grid $G_{3,n}$.  
As depicted in Figure \ref{patterns3n}, each set $D_n$ starts with the pattern 1-2 and ends with 2-1.  Then $D_n$ repeats the subpattern -2-1-1- as many times as possible in the middle columns.
Figure \ref{patterns29} shows a larger example.

\begin{figure}[H]
\centering
\begin{picture}(390,30)(0,0)
	\linethickness{.1 mm}
	\multiput(10,0)(0,10){3}{\line(1,0){20}}
	\multiput(10,0)(10,0){3}{\line(0,1){20}}
	\put(10,10){\color{blue}{\circle*{5}}}
	\put(20,0){\color{blue}{\circle*{5}}}
	\put(20,20){\color{blue}{\circle*{5}}}
	\put(30,10){\color{blue}{\circle*{5}}}
	\multiput(50,0)(0,10){3}{\line(1,0){30}}
	\multiput(50,0)(10,0){4}{\line(0,1){20}}
	\put(50,10){\color{blue}{\circle*{5}}}
	\put(60,0){\color{blue}{\circle*{5}}}
	\put(60,20){\color{blue}{\circle*{5}}}
	\put(70,20){\color{blue}{\circle*{5}}}
	\put(70,0){\color{blue}{\circle*{5}}}
	\put(80,10){\color{blue}{\circle*{5}}}
	\multiput(100,0)(0,10){3}{\line(1,0){40}}
	\multiput(100,0)(10,0){5}{\line(0,1){20}}
	\put(100,10){\color{blue}{\circle*{5}}}
	\put(110,0){\color{blue}{\circle*{5}}}
	\put(110,20){\color{blue}{\circle*{5}}}
	\put(120,10){\color{blue}{\circle*{5}}}
	\put(130,20){\color{blue}{\circle*{5}}}
	\put(130,0){\color{blue}{\circle*{5}}}
	\put(140,10){\color{blue}{\circle*{5}}}
	\multiput(160,0)(0,10){3}{\line(1,0){50}}
	\multiput(160,0)(10,0){6}{\line(0,1){20}}
	\put(160,10){\color{blue}{\circle*{5}}}
	\put(170,0){\color{blue}{\circle*{5}}}
	\put(170,20){\color{blue}{\circle*{5}}}
	\put(180,10){\color{blue}{\circle*{5}}}
	\put(190,10){\color{blue}{\circle*{5}}}
	\put(200,0){\color{blue}{\circle*{5}}}
	\put(200,20){\color{blue}{\circle*{5}}}
	\put(210,10){\color{blue}{\circle*{5}}}
	\multiput(230,0)(0,10){3}{\line(1,0){60}}
	\multiput(230,0)(10,0){7}{\line(0,1){20}}
	\put(230,10){\color{blue}{\circle*{5}}}
	\put(240,0){\color{blue}{\circle*{5}}}
	\put(240,20){\color{blue}{\circle*{5}}}
	\put(250,10){\color{blue}{\circle*{5}}}
	\put(260,10){\color{blue}{\circle*{5}}}
	\put(290,10){\color{blue}{\circle*{5}}}
	\put(280,0){\color{blue}{\circle*{5}}}
	\put(270,0){\color{blue}{\circle*{5}}}
	\put(270,20){\color{blue}{\circle*{5}}}
	\put(280,20){\color{blue}{\circle*{5}}}
	\multiput(310,0)(0,10){3}{\line(1,0){70}}
	\multiput(310,0)(10,0){8}{\line(0,1){20}}
	\put(310,10){\color{blue}{\circle*{5}}}
	\put(320,0){\color{blue}{\circle*{5}}}
	\put(320,20){\color{blue}{\circle*{5}}}
	\put(330,10){\color{blue}{\circle*{5}}}
	\put(340,10){\color{blue}{\circle*{5}}}
	\put(370,20){\color{blue}{\circle*{5}}}
	\put(380,10){\color{blue}{\circle*{5}}}
	\put(350,0){\color{blue}{\circle*{5}}}
	\put(350,20){\color{blue}{\circle*{5}}}
	\put(360,10){\color{blue}{\circle*{5}}}
	\put(370,0){\color{blue}{\circle*{5}}}

\end{picture}
\caption{Dominating sets $D_n$ for $G_{3,n}$} \label{patterns3n}
\end{figure}
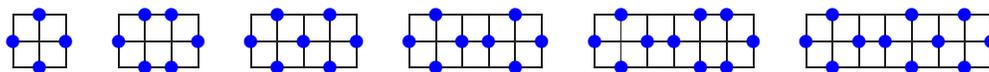

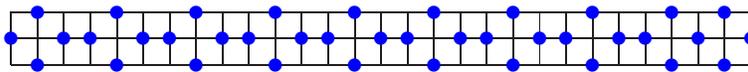
\begin{figure}[H]
\centering
\begin{picture}(300,20)(0,0)
	\linethickness{.1 mm}
	\multiput(10,0)(0,10){3}{\line(1,0){280}}
	\multiput(10,0)(10,0){29}{\line(0,1){20}}
	\put(10,10){\color{blue}{\circle*{5}}}
	\put(20,0){\color{blue}{\circle*{5}}}
	\put(20,20){\color{blue}{\circle*{5}}}
	\put(30,10){\color{blue}{\circle*{5}}}
	\put(40,10){\color{blue}{\circle*{5}}}
	\put(50,0){\color{blue}{\circle*{5}}}
	\put(50,20){\color{blue}{\circle*{5}}}
	\put(60,10){\color{blue}{\circle*{5}}}
	\put(70,10){\color{blue}{\circle*{5}}}
	\put(80,0){\color{blue}{\circle*{5}}}
	\put(80,20){\color{blue}{\circle*{5}}}
	\put(90,10){\color{blue}{\circle*{5}}}
	\put(100,10){\color{blue}{\circle*{5}}}
	\put(110,0){\color{blue}{\circle*{5}}}
	\put(110,20){\color{blue}{\circle*{5}}}
	\put(120,10){\color{blue}{\circle*{5}}}
	\put(130,10){\color{blue}{\circle*{5}}}
	\put(140,0){\color{blue}{\circle*{5}}}
	\put(140,20){\color{blue}{\circle*{5}}}
	\put(150,10){\color{blue}{\circle*{5}}}
	\put(160,10){\color{blue}{\circle*{5}}}
	\put(170,0){\color{blue}{\circle*{5}}}
	\put(170,20){\color{blue}{\circle*{5}}}
	\put(180,10){\color{blue}{\circle*{5}}}
	\put(190,10){\color{blue}{\circle*{5}}}
	\put(200,0){\color{blue}{\circle*{5}}}
	\put(200,20){\color{blue}{\circle*{5}}}
	\put(210,10){\color{blue}{\circle*{5}}}
	\put(220,10){\color{blue}{\circle*{5}}}
	\put(230,0){\color{blue}{\circle*{5}}}
	\put(230,20){\color{blue}{\circle*{5}}}
	\put(240,10){\color{blue}{\circle*{5}}}
	\put(250,10){\color{blue}{\circle*{5}}}
	\put(260,0){\color{blue}{\circle*{5}}}
	\put(260,20){\color{blue}{\circle*{5}}}
	\put(270,10){\color{blue}{\circle*{5}}}
	\put(280,0){\color{blue}{\circle*{5}}}
	\put(280,20){\color{blue}{\circle*{5}}}
	\put(290,10){\color{blue}{\circle*{5}}}

\end{picture}
\caption{Dominating set $D_{29}$ for $G_{3,29}$} \label{patterns29}
\end{figure}

The sets described in Figure \ref{patterns3n} dominate $G_{3,n}$.
The following result proves they are the smallest dominating sets of $G_{3,n}$ and determines the cardinality of each. 

\begin{theorem}
Let $n\ge 3$.  The $(2,2)$ broadcast domination number of the $3\times n$ grid is 
\[ \gamma_{2,2}(G_{3,n}) =  \left \lceil \frac{4n}{3} \right \rceil.\]
\end{theorem}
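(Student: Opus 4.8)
The plan is to prove the two inequalities $\gamma_{2,2}(G_{3,n}) \le \lceil 4n/3\rceil$ and $\gamma_{2,2}(G_{3,n}) \ge \lceil 4n/3\rceil$ separately, the second being the real content. Throughout I would use the following reformulation of $(2,2)$-domination, which is immediate from the definitions since a strength-$2$ vertex broadcasts $2$ to itself and $1$ to each neighbor: a set $S$ is a $(2,2)$-broadcast dominating set if and only if every vertex of $G_{3,n}$ either lies in $S$ or has at least two neighbors in $S$. This converts ``reception strength $\ge 2$'' into a purely combinatorial adjacency condition that is easy to verify.

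For the upper bound I would check directly, using this reformulation, that the sets $D_n$ of Figure \ref{patterns3n} are dominating: the interior $2$-$1$-$1$ blocks place the singleton columns in the top and bottom rows staggered against the middle-row doubletons so that every vertex not in $S$ sees two neighbors in $S$, while the end blocks $1$-$2$ and $2$-$1$ handle the boundary columns, whose vertices have fewer neighbors. Counting $|D_n|$ is then a routine arithmetic check in the three residue classes $n \equiv 0,1,2 \pmod 3$, each giving $|D_n| = \lceil 4n/3\rceil$.

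For the lower bound, write $a_j$ for the number of elements of a dominating set $S$ in column $j$, so that $|S| = \sum_{j=1}^{n} a_j$, and aim to show $\sum a_j \ge \lceil 4n/3\rceil$. I would first record the structural constraints forced by the reformulation. An empty interior column is extremely costly: if $a_j = 0$, then each vertex $(i,j)$ has its only possible suppliers at $(i,j-1)$ and $(i,j+1)$, so both must lie in $S$ for every row $i$, forcing $a_{j-1} = a_{j+1} = 3$; moreover a boundary column can never be empty. Thus $0$-columns are isolated, flanked by full columns, and contribute well above the target density. The crux is therefore to bound runs of ``light'' columns ($a_j = 1$). Here I would invoke the two necessary properties of a $(t,r)$ dominating subpattern together with the Minimal Pattern Search verification: the key finite fact is that the subpattern $2$-$1$-$1$-$1$ cannot be completed to a dominating set, since its two interior singleton columns cannot simultaneously reach reception strength $2$ for any placement of the marked vertices. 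The same kind of finite check rules out any run of three consecutive singleton columns, so that runs of $1$-columns have length at most two, and since a $1$-column can never be adjacent to a $0$-column, each such light column is adjacent to a ``heavy'' column ($a_j \ge 2$).

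Finally I would convert these local facts into the global bound by an amortized (discharging) count over columns: give each column its charge $a_j$ and let each heavy column donate $\frac13$ to each adjacent light column, so that every interior column retains charge at least $\frac43$, the bounded-run facts guaranteeing that each light column receives enough to reach $\frac43$. Summing yields $\sum a_j \ge \frac43 n$, and a separate, slightly stronger accounting at the two boundary columns, which are never empty and whose corner vertices are especially starved for suppliers, recovers the exact ceiling rather than just $\frac43 n$; a handful of small base cases ($n = 3,4,5$) can be cleared by the MPS algorithm directly. The main obstacle, and where essentially all the difficulty lies, is this lower bound: the naive supply--demand inequality only gives $|S| \ge n$, and even a row-weighted refinement falls short of $\lceil 4n/3\rceil$, so the fractional relaxation has a genuine gap. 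The argument must therefore exploit integrality through the exhaustive MPS verification of the forbidden short subpatterns, and the bookkeeping that upgrades the clean $\frac43 n$ amortized bound to the exact ceiling while correctly handling the interaction of $0$-, $2$-, and $3$-columns near the boundary is the delicate part.
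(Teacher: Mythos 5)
Your proposal is correct in substance and, at its core, rests on the same two pillars as the paper's proof: the identical construction $D_n$ for the upper bound, and exhaustively verified forbidden short subpatterns (1-1-1 and 2-1-1-1, via the MPS computation) for the lower bound. Where you genuinely diverge is in how the local exclusions are converted into the global bound. The paper stops after the MPS checks (it also checks 2-1-0) and simply asserts that since $D_n$ ``uses 2-1-1 as many times as possible,'' no smaller set exists; the averaging step is left implicit. You make it explicit: your reformulation of $(2,2)$-domination (every vertex is in $S$ or has at least two neighbors in $S$) is correct, your direct lemma that an empty interior column forces $a_{j-1}=a_{j+1}=3$ is cleaner and stronger than the paper's MPS check of 2-1-0, and the column-charge discharging argument rigorously yields $\sum a_j \ge \tfrac{4}{3}n$. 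This buys a genuinely complete proof of minimality, whereas the paper's version buys brevity at the cost of a gap between ``these subpatterns are forbidden'' and ``hence $D_n$ is minimal.''

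Two bookkeeping points in your plan need repair, both within reach of your own lemmas. First, as literally stated your discharging rule (heavy columns donate $\tfrac13$ to each adjacent light column) leaves an empty column with final charge $0$, so the sentence ``summing yields $\sum a_j \ge \tfrac43 n$'' fails whenever a $0$-column is present; you must also route charge across the 3-0-3 configuration, e.g.\ each flanking $3$-column donates $\tfrac23$ to the empty column, after which every donor still retains at least $\tfrac43$ (worst case 3-0-3-0-3 works out). Second, your worry about upgrading $\tfrac43 n$ to the exact ceiling is unfounded: $|S|$ is an integer, so $|S|\ge \tfrac43 n$ already gives $|S|\ge \left\lceil \tfrac{4n}{3}\right\rceil$ with no extra boundary accounting. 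What the boundary analysis \emph{is} needed for is structural: ruling out a light run touching the edge, which your observation that $a_1=1$ forces the middle vertex of column $1$ into $S$ and hence $a_2\ge 2$ accomplishes.
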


\begin{proof}

Let $D_n$ denote a dominating set for $G_{3,n}$ given by our construction.  
First we will show that $D_n$ is the smallest dominating set for $G_n$, proving $|D_n| = \gamma_{2,2}(G_{3,n})$.  Then we will show that the formula given above counts the cardinality of $D_n$.

Since each $D_n$ uses the pattern 2-1-1 as many times as possible, the only way a dominating set for $G_{3,n}$ could be more
efficient is if it used the pattern 2-1-1-1. Figure \ref{111} shows an example of a pattern starting with 2-1-1 that dominates the first two columns.
The two vertices in the third column that are circled in red only have a reception strength of 1.  Thus they require two vertices from the fourth column to be selected 
to dominate the third column.  In fact, our MPS program shows that every pattern that starts with 2-1-1 requires us to select two vertices from the fourth column 
\footnote{The calculation is available at \\
 \tt https://cloud.sagemath.com/projects/26b983ae-a894-47c0-bd54-c73b071e555c/files/(2,2)BDPC.sagews}.
Hence, no dominating set of $G_{3,n}$ can contain the pattern 2-1-1-1.  Similarly, the program also shows that the patterns 2-1-0 and 1-1-1 are not dominating subpatterns.  
This proves our construction for $D_n$ is the smallest dominating set for $G_{3,n}$.  

\begin{figure}[H]
\centering
\begin{picture}(150,20)(0,0)
	\linethickness{.1 mm}
	\multiput(5,0)(0,10){3}{\line(1,0){40}}
	\multiput(10,0)(10,0){4}{\line(0,1){20}}
	\put(10,20){\color{blue}{\circle*{5}}}
	\put(10,0){\color{blue}{\circle*{5}}}
	\put(20,10){\color{blue}{\circle*{5}}}
	\put(30,10){\color{blue}{\circle*{5}}}
	\put(30,20){\color{red}{\circle{5}}}
	\put(30,0){\color{red}{\circle{5}}}
	\multiput(105,0)(0,10){3}{\line(1,0){40}}
	\multiput(110,0)(10,0){4}{\line(0,1){20}}
	\put(110,20){\color{blue}{\circle*{5}}}
	\put(110,0){\color{blue}{\circle*{5}}}
	\put(120,10){\color{blue}{\circle*{5}}}
	\put(130,10){\color{blue}{\circle*{5}}}
	\put(140,20){\color{blue}{\circle*{5}}}
	\put(140,0){\color{blue}{\circle*{5}}}
\end{picture}
\caption{Pattern containing 2-1-1} \label{111}
\end{figure}
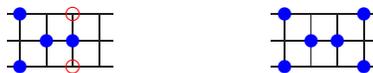

Next, we use the principle of strong mathematical induction to show that $|D_n| = \left \lceil \frac{4n}{3} \right \rceil$.
We start by verifying this formula for three base cases by inspecting $D_3,D_4,$ and $D_5$ in Figure \ref{patterns3n}.  
We thus confirm that \[ |D_3| = 4 = \left \lceil \frac{4(3)}{3} \right \rceil   \hspace{1cm} |D_4| = 6 = \left \lceil \frac{4(4)}{3} \right \rceil \hspace{.5cm}  \text{ and }   \hspace{.5cm} |D_5| = 7 = \left \lceil \frac{4(5)}{3} \right \rceil \]
Suppose by the principle of strong mathematical induction that for all $k \leq n $ the cardinality of the set $D_k$ is given by the formula
 $| D_k | =  \left \lceil \frac{4 k}{3} \right \rceil $.
Consider the set $D_{n+1}$.  It can be constructed from the set $D_{n-2}$ by using $D_{n-2}$ to dominate columns $4$ through $n+1$ of $G_{3,n+1}$
 and using the pattern for $D_3$ to dominate the first three columns of $G_{3,n+1}$. 
Thus the cardinality of $D_n$ is  \[ | D_{n+1}  | = |D_{n-2}|+|D_3| = \left \lceil \frac{4(n-2)}{3} \right \rceil + 4 = \left \lceil \frac{4(n+1)}{3} \right \rceil . \qedhere\]
\end{proof}

\subsection{(2,2) Domination of 4 by n Grids}

We now describe a pattern for constructing an efficient dominating set $D_n$ for any $4 \times n $ grid $G_{4,n}$.
By examining Figure \ref{22}, one can verify that any two adjacent columns can be dominated by a pattern with two vertices in each column.  
Thus the pattern with two vertices in each column dominates any $G_{4,n}$.  
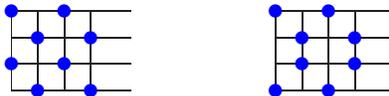
\begin{figure}[H]
\centering
\begin{picture}(165,30)(0,0)
\linethickness{.1 mm}
	\multiput(10,0)(0,10){4}{\line(1,0){45}}
	\multiput(10,0)(10,0){4}{\line(0,1){30}}
	\put(10,30){\color{blue}{\circle*{5}}}
	\put(10,10){\color{blue}{\circle*{5}}}
	\put(20,0){\color{blue}{\circle*{5}}}
	\put(20,20){\color{blue}{\circle*{5}}}
	\put(30,10){\color{blue}{\circle*{5}}}
	\put(30,30){\color{blue}{\circle*{5}}}
	\put(40,0){\color{blue}{\circle*{5}}}
	\put(40,20){\color{blue}{\circle*{5}}}
	\multiput(110,0)(0,10){4}{\line(1,0){45}}
	\multiput(110,0)(10,0){4}{\line(0,1){30}}
	\put(110,30){\color{blue}{\circle*{5}}}
	\put(110,0){\color{blue}{\circle*{5}}}
	\put(120,10){\color{blue}{\circle*{5}}}
	\put(120,20){\color{blue}{\circle*{5}}}
	\put(130,30){\color{blue}{\circle*{5}}}
	\put(130,0){\color{blue}{\circle*{5}}}
	\put(140,10){\color{blue}{\circle*{5}}}
	\put(140,20){\color{blue}{\circle*{5}}}
\end{picture}
\caption{Some dominating subpatterns.} \label{22}
\end{figure}

To obtain a more efficient construction, we start with a similar pattern that has two vertices in each column. 
Then, for every five consecutive columns between columns $2$ and $n-1$, replace the pattern 2-2-2-2-2 with the pattern 2-1-3-1-2.  For instance, the underlined portion of 
2-\underline{2-2-2-2-2}-2 is swapped for 2-\underline{2-1-3-1-2}-2, see Figure \ref{swap}.  

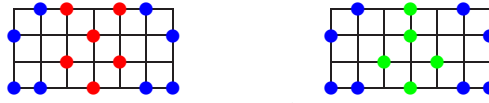
\begin{figure}[H]
\centering
\begin{picture}(200,30)(0,0)
	\linethickness{.1 mm}
	\multiput(10,0)(0,10){4}{\line(1,0){60}}
	\multiput(10,0)(10,0){7}{\line(0,1){30}}
	\put(10,20){\color{blue}{\circle*{5}}}
	\put(10,0){\color{blue}{\circle*{5}}}
	\put(20,30){\color{blue}{\circle*{5}}}
	\put(20,0){\color{blue}{\circle*{5}}}
	\put(30,10){\color{red}{\circle*{5}}}
	\put(30,30){\color{red}{\circle*{5}}}
	\put(40,20){\color{red}{\circle*{5}}}
	\put(40,0){\color{red}{\circle*{5}}}
	\put(50,10){\color{red}{\circle*{5}}}
	\put(50,30){\color{red}{\circle*{5}}}
	\put(60,30){\color{blue}{\circle*{5}}}
	\put(60,0){\color{blue}{\circle*{5}}}
	\put(70,20){\color{blue}{\circle*{5}}}
	\put(70,0){\color{blue}{\circle*{5}}}
	\multiput(130,0)(0,10){4}{\line(1,0){60}}
	\multiput(130,0)(10,0){7}{\line(0,1){30}}
	\put(130,20){\color{blue}{\circle*{5}}}
	\put(130,0){\color{blue}{\circle*{5}}}
	\put(140,30){\color{blue}{\circle*{5}}}
	\put(140,0){\color{blue}{\circle*{5}}}
	\put(150,10){\color{green}{\circle*{5}}}
	\put(160,30){\color{green}{\circle*{5}}}
	\put(160,20){\color{green}{\circle*{5}}}
	\put(160,0){\color{green}{\circle*{5}}}
	\put(170,10){\color{green}{\circle*{5}}}
	\put(180,30){\color{blue}{\circle*{5}}}
	\put(180,0){\color{blue}{\circle*{5}}}
	\put(190,20){\color{blue}{\circle*{5}}}
	\put(190,0){\color{blue}{\circle*{5}}}
\end{picture}
\caption{4 x 7 Grid: The interior allows for a swap to a more efficient pattern.} \label{swap}
\end{figure}

When doing more than one swap, it is necessary flip the orientation of the subset satisfying the pattern 2-1-3-1-2 upside down as shown in Figure \ref{flip}
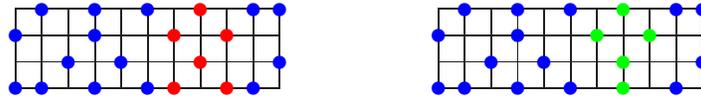
\begin{figure}[H]
\centering
\begin{picture}(280,30)(0,0)
	\linethickness{.1 mm}
	\multiput(10,0)(0,10){4}{\line(1,0){100}}
	\multiput(10,0)(10,0){11}{\line(0,1){30}}
	\put(10,20){\color{blue}{\circle*{5}}}
	\put(10,0){\color{blue}{\circle*{5}}}
	\put(20,30){\color{blue}{\circle*{5}}}
	\put(20,0){\color{blue}{\circle*{5}}}
	\put(30,10){\color{blue}{\circle*{5}}}
	\put(40,30){\color{blue}{\circle*{5}}}
	\put(40,20){\color{blue}{\circle*{5}}}
	\put(40,0){\color{blue}{\circle*{5}}}
	\put(50,10){\color{blue}{\circle*{5}}}
	\put(60,30){\color{blue}{\circle*{5}}}
	\put(60,0){\color{blue}{\circle*{5}}}
	\put(70,20){\color{red}{\circle*{5}}}
	\put(70,0){\color{red}{\circle*{5}}}
	\put(80,30){\color{red}{\circle*{5}}}
	\put(80,10){\color{red}{\circle*{5}}}
	\put(90,20){\color{red}{\circle*{5}}}
	\put(90,0){\color{red}{\circle*{5}}}
	\put(100,30){\color{blue}{\circle*{5}}}
	\put(100,0){\color{blue}{\circle*{5}}}
	\put(110,30){\color{blue}{\circle*{5}}}
	\put(110,10){\color{blue}{\circle*{5}}}
	\multiput(170,0)(0,10){4}{\line(1,0){100}}
	\multiput(170,0)(10,0){11}{\line(0,1){30}}
	\put(170,20){\color{blue}{\circle*{5}}}
	\put(170,0){\color{blue}{\circle*{5}}}
	\put(180,30){\color{blue}{\circle*{5}}}
	\put(180,0){\color{blue}{\circle*{5}}}
	\put(190,10){\color{blue}{\circle*{5}}}
	\put(200,30){\color{blue}{\circle*{5}}}
	\put(200,20){\color{blue}{\circle*{5}}}
	\put(200,0){\color{blue}{\circle*{5}}}
	\put(210,10){\color{blue}{\circle*{5}}}
	\put(220,30){\color{blue}{\circle*{5}}}
	\put(220,0){\color{blue}{\circle*{5}}}
	\put(230,20){\color{green}{\circle*{5}}}
	\put(240,30){\color{green}{\circle*{5}}}
	\put(240,10){\color{green}{\circle*{5}}}
	\put(240,0){\color{green}{\circle*{5}}}
	\put(250,20){\color{green}{\circle*{5}}}
	\put(260,30){\color{blue}{\circle*{5}}}
	\put(260,0){\color{blue}{\circle*{5}}}
	\put(270,30){\color{blue}{\circle*{5}}}
	\put(270,10){\color{blue}{\circle*{5}}}
\end{picture}
\caption{$4 \times 11$ Grid: Another swap to a more efficient pattern.} \label{flip}
\end{figure}

The resulting recursive constructions of $D_n$ are depicted in Figure \ref{fig:4xn}.
We note that the reason we start and end our pattern with 2-2 is that it is impossible for a dominating set to start off with the pattern 2-1-3, as the vertices in the first column MPS not all have reception strengths of at least 2.  Moreover, starting with 
the pattern 3-1-2 gives the same dominating number as 2-2-2, which is less efficient than our pattern of 2-2-1 once $n\ge 7$.

\begin{figure}[H]
\centering
\begin{picture}(450,30)(0,0)
	\linethickness{.1 mm}
	\multiput(10,0)(0,10){4}{\line(1,0){30}}
	\multiput(10,0)(10,0){4}{\line(0,1){30}}
	\put(10,20){\color{blue}{\circle*{5}}}
	\put(10,0){\color{blue}{\circle*{5}}}
	\put(20,30){\color{blue}{\circle*{5}}}
	\put(20,0){\color{blue}{\circle*{5}}}
	\put(30,30){\color{blue}{\circle*{5}}}
	\put(30,10){\color{blue}{\circle*{5}}}
	\put(40,20){\color{blue}{\circle*{5}}}
	\put(40,0){\color{blue}{\circle*{5}}}
	\multiput(60,0)(0,10){4}{\line(1,0){40}}
	\multiput(60,0)(10,0){5}{\line(0,1){30}}
 	\put(60,20){\color{blue}{\circle*{5}}}
 	\put(60,0){\color{blue}{\circle*{5}}}
	\put(70,30){\color{blue}{\circle*{5}}}
 	\put(70,0){\color{blue}{\circle*{5}}}
	\put(80,30){\color{blue}{\circle*{5}}}
 	\put(80,10){\color{blue}{\circle*{5}}}
 	\put(90,30){\color{blue}{\circle*{5}}}
 	\put(90,0){\color{blue}{\circle*{5}}}
	\put(100,20){\color{blue}{\circle*{5}}}
 	\put(100,0){\color{blue}{\circle*{5}}}
	\multiput(120,0)(0,10){4}{\line(1,0){50}}
 	\multiput(120,0)(10,0){6}{\line(0,1){30}}
 	\put(120,20){\color{blue}{\circle*{5}}}
 	\put(120,0){\color{blue}{\circle*{5}}}
	\put(130,30){\color{blue}{\circle*{5}}}
 	\put(130,0){\color{blue}{\circle*{5}}}
	\put(140,30){\color{blue}{\circle*{5}}}
 	\put(140,10){\color{blue}{\circle*{5}}}
	\put(150,30){\color{blue}{\circle*{5}}}
	\put(150,10){\color{blue}{\circle*{5}}}
	\put(160,30){\color{blue}{\circle*{5}}}
 	\put(160,0){\color{blue}{\circle*{5}}}
	\put(170,20){\color{blue}{\circle*{5}}}
 	\put(170,0){\color{blue}{\circle*{5}}}
	\multiput(190,0)(0,10){4}{\line(1,0){60}}
	\multiput(190,0)(10,0){7}{\line(0,1){30}}
	\put(190,20){\color{blue}{\circle*{5}}}
	\put(190,0){\color{blue}{\circle*{5}}}
	\put(200,30){\color{blue}{\circle*{5}}}
	\put(200,0){\color{blue}{\circle*{5}}}
	\put(210,10){\color{blue}{\circle*{5}}}
	\put(220,30){\color{blue}{\circle*{5}}}
	\put(220,20){\color{blue}{\circle*{5}}}
	\put(220,0){\color{blue}{\circle*{5}}}
	\put(230,10){\color{blue}{\circle*{5}}}
	\put(240,30){\color{blue}{\circle*{5}}}
	\put(240,0){\color{blue}{\circle*{5}}}
	\put(250,20){\color{blue}{\circle*{5}}}
	\put(250,0){\color{blue}{\circle*{5}}}
	\multiput(270,0)(0,10){4}{\line(1,0){70}}
	\multiput(270,0)(10,0){8}{\line(0,1){30}}
	\put(270,20){\color{blue}{\circle*{5}}}
	\put(270,0){\color{blue}{\circle*{5}}}
	\put(280,30){\color{blue}{\circle*{5}}}
	\put(280,0){\color{blue}{\circle*{5}}}
	\put(290,10){\color{blue}{\circle*{5}}}
	\put(300,30){\color{blue}{\circle*{5}}}
	\put(300,20){\color{blue}{\circle*{5}}}
	\put(300,0){\color{blue}{\circle*{5}}}
	\put(310,10){\color{blue}{\circle*{5}}}
	\put(320,30){\color{blue}{\circle*{5}}}
	\put(320,0){\color{blue}{\circle*{5}}}
	\put(330,20){\color{blue}{\circle*{5}}}
	\put(330,0){\color{blue}{\circle*{5}}}
	\put(340,30){\color{blue}{\circle*{5}}}
	\put(340,10){\color{blue}{\circle*{5}}}
	\multiput(360,0)(0,10){4}{\line(1,0){80}}
	\multiput(360,0)(10,0){9}{\line(0,1){30}}
	\put(360,20){\color{blue}{\circle*{5}}}
	\put(360,0){\color{blue}{\circle*{5}}}
	\put(370,30){\color{blue}{\circle*{5}}}
	\put(370,0){\color{blue}{\circle*{5}}}
	\put(380,10){\color{blue}{\circle*{5}}}
	\put(390,30){\color{blue}{\circle*{5}}}
	\put(390,20){\color{blue}{\circle*{5}}}
	\put(390,0){\color{blue}{\circle*{5}}}
	\put(400,10){\color{blue}{\circle*{5}}}
	\put(410,30){\color{blue}{\circle*{5}}}
	\put(410,0){\color{blue}{\circle*{5}}}
	\put(420,20){\color{blue}{\circle*{5}}}
	\put(420,0){\color{blue}{\circle*{5}}}
	\put(430,30){\color{blue}{\circle*{5}}}
	\put(430,10){\color{blue}{\circle*{5}}}
	\put(440,20){\color{blue}{\circle*{5}}}
	\put(440,0){\color{blue}{\circle*{5}}}
\end{picture}
\end{figure}

\begin{figure}[H]
\centering
\begin{picture}(380,30)(0,0)
	\linethickness{.1 mm}
	\multiput(10,0)(0,10){4}{\line(1,0){90}}
	\multiput(10,0)(10,0){10}{\line(0,1){30}}
	\put(10,20){\color{blue}{\circle*{5}}}
	\put(10,0){\color{blue}{\circle*{5}}}
	\put(20,30){\color{blue}{\circle*{5}}}
	\put(20,0){\color{blue}{\circle*{5}}}
	\put(30,10){\color{blue}{\circle*{5}}}
	\put(40,30){\color{blue}{\circle*{5}}}
	\put(40,20){\color{blue}{\circle*{5}}}
	\put(40,0){\color{blue}{\circle*{5}}}
	\put(50,10){\color{blue}{\circle*{5}}}
	\put(60,30){\color{blue}{\circle*{5}}}
	\put(60,0){\color{blue}{\circle*{5}}}
	\put(70,20){\color{blue}{\circle*{5}}}
	\put(70,0){\color{blue}{\circle*{5}}}
	\put(80,30){\color{blue}{\circle*{5}}}
	\put(80,10){\color{blue}{\circle*{5}}}
	\put(90,30){\color{blue}{\circle*{5}}}
	\put(90,0){\color{blue}{\circle*{5}}}
	\put(100,20){\color{blue}{\circle*{5}}}
	\put(100,0){\color{blue}{\circle*{5}}}
	\multiput(120,0)(0,10){4}{\line(1,0){100}}
	\multiput(120,0)(10,0){11}{\line(0,1){30}}
	\put(120,20){\color{blue}{\circle*{5}}}
	\put(120,0){\color{blue}{\circle*{5}}}
	\put(130,30){\color{blue}{\circle*{5}}}
	\put(130,0){\color{blue}{\circle*{5}}}
	\put(140,10){\color{blue}{\circle*{5}}}
	\put(150,30){\color{blue}{\circle*{5}}}
	\put(150,20){\color{blue}{\circle*{5}}}
	\put(150,0){\color{blue}{\circle*{5}}}
	\put(160,10){\color{blue}{\circle*{5}}}
	\put(170,30){\color{blue}{\circle*{5}}}
	\put(170,0){\color{blue}{\circle*{5}}}
	\put(180,20){\color{blue}{\circle*{5}}}
	\put(190,30){\color{blue}{\circle*{5}}}
	\put(190,10){\color{blue}{\circle*{5}}}
	\put(190,0){\color{blue}{\circle*{5}}}
	\put(200,20){\color{blue}{\circle*{5}}}
	\put(210,30){\color{blue}{\circle*{5}}}
	\put(210,0){\color{blue}{\circle*{5}}}
	\put(220,30){\color{blue}{\circle*{5}}}
	\put(220,10){\color{blue}{\circle*{5}}}
	\multiput(240,0)(0,10){4}{\line(1,0){110}}
	\multiput(240,0)(10,0){12}{\line(0,1){30}}
	\put(240,20){\color{blue}{\circle*{5}}}
	\put(240,0){\color{blue}{\circle*{5}}}
	\put(250,30){\color{blue}{\circle*{5}}}
	\put(250,0){\color{blue}{\circle*{5}}}
	\put(260,10){\color{blue}{\circle*{5}}}
	\put(270,30){\color{blue}{\circle*{5}}}
	\put(270,20){\color{blue}{\circle*{5}}}
	\put(270,0){\color{blue}{\circle*{5}}}
	\put(280,10){\color{blue}{\circle*{5}}}
	\put(290,30){\color{blue}{\circle*{5}}}
	\put(290,0){\color{blue}{\circle*{5}}}
	\put(300,20){\color{blue}{\circle*{5}}}
	\put(310,30){\color{blue}{\circle*{5}}}
	\put(310,10){\color{blue}{\circle*{5}}}
	\put(310,0){\color{blue}{\circle*{5}}}
	\put(320,20){\color{blue}{\circle*{5}}}
	\put(330,30){\color{blue}{\circle*{5}}}
	\put(330,0){\color{blue}{\circle*{5}}}
	\put(340,30){\color{blue}{\circle*{5}}}
	\put(340,10){\color{blue}{\circle*{5}}}
	\put(350,20){\color{blue}{\circle*{5}}}
	\put(350,0){\color{blue}{\circle*{5}}}

\end{picture}
\caption{Dominating Sets for $4 \times 4$ to $4 \times 12$} \label{fig:4xn}
\end{figure}
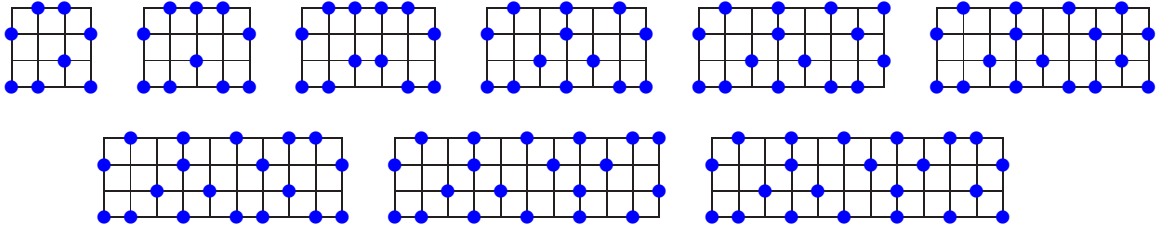

We now prove that the dominating sets $D_n$ we just described are the smallest dominating sets of $G_{4,n}$ for any $n$.

\begin{theorem}
Let $n\ge 4$.  The $(2,2)$ broadcast domination number of the $4\times n$ grid is
\[ \gamma_{2,2}(G_{4,n}) =  2n - \left \lceil\frac{n-6}{4} \right \rceil.\]
\end{theorem}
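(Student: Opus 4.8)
The plan is to prove the asserted value by establishing matching upper and lower bounds, just as in the preceding $3\times n$ theorem: the sets $D_n$ give the upper bound, and the substantive direction is ruling out any smaller dominating set.

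For the upper bound I would first confirm that $D_n$ is a $(2,2)$ broadcast dominating set. Since for $(\ww,\tw)=(2,2)$ a broadcaster contributes strength $2$ to itself and $1$ to each neighbor, a vertex $v$ has $\tw(v)\ge 2$ precisely when $v\in S$ or $v$ has at least two neighbors in $S$; domination can therefore be checked locally. It suffices to verify that any two adjacent all-$2$ columns dominate each other (as in Figure~\ref{22}) and that each inserted block $2$-$1$-$3$-$1$-$2$ dominates its five columns, after which the fact that consecutive blocks meet only in shared flanking $2$-columns shows the entire grid is covered. To count $|D_n|$, note that the baseline all-$2$'s set has $2n$ vertices and each swap replaces $2$-$2$-$2$-$2$-$2$ (ten vertices) by $2$-$1$-$3$-$1$-$2$ (nine vertices), so each of the $\lceil (n-6)/4\rceil$ swaps lowers the count by exactly one. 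Writing $f(n)=2n-\lceil (n-6)/4\rceil$, one checks the identity $f(n+4)=f(n)+7$, which matches the observation that each additional block costs four columns and seven vertices; the count then follows by strong induction with step $4$ after verifying the base cases $n=4,5,6,7$ against Figure~\ref{fig:4xn}.

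For the lower bound I would work with the column counts $c_1,\dots,c_n$ of an arbitrary $(2,2)$ dominating set $S$ and track the total \emph{savings} $\sum_{j=1}^n (2-c_j)$; the desired inequality $|S|\ge 2n-\lceil (n-6)/4\rceil$ is equivalent to $\sum_{j=1}^n (2-c_j)\le \lceil (n-6)/4\rceil$. The engine is the MPS algorithm, which I would use to certify a finite list of forbidden subpatterns, namely short strings of column counts that violate one of the two necessary properties of a dominating subpattern and hence cannot occur in any dominating set. The two facts I would extract are an interior density bound (no dominating set can accumulate net savings faster than one per four columns, so suitably phrased short windows of columns must contain enough vertices) and a boundary rigidity statement (a dominating set cannot begin with $2$-$1$-$3$, and the most efficient legal start and end is $2$-$2$, as already noted in the text). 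Combining the interior bound with the boundary rigidity, the latter accounting for the $-6$, i.e.\ the boundary columns at the two ends that cannot be part of a savings block, would give $\sum_{j=1}^n (2-c_j)\le \lceil (n-6)/4\rceil$, and I would organize the combination as an induction with step $4$ mirroring $f(n+4)=f(n)+7$.

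The main obstacle is this lower bound, and in particular pinning down the exact additive constant rather than merely the asymptotic density $7/4$. A naive tiling of the columns into four-column windows yields only $|S|\gtrsim 7n/4$, because overlapping windows and the untreated ends lose a bounded amount; the real difficulty is proving that the two boundaries cannot be exploited for extra savings, which is exactly what the MPS-certified forbidden patterns and a finite boundary case check must supply. I expect the proof to reduce, after the density bound, to verifying finitely many boundary configurations with MPS and then matching the recursion $f(n+4)=f(n)+7$ from below.
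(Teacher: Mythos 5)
Your proposal is correct in outline and runs on the same engine as the paper's proof: the same construction and count for the upper bound (your step-$4$ identity $f(n+4)=f(n)+7$ is equivalent to the paper's step-$1$ recursion $|D_n|=|D_{n-1}|+1$ when $n\equiv 3 \bmod 4$ and $n\ge 7$, else $|D_{n-1}|+2$), and MPS-certified exclusion of short column patterns for the lower bound. Where you differ is the bookkeeping. The paper runs no savings/window induction; it makes the reduction concrete by naming the only two subpatterns whose presence could beat density $7/4$, namely 2-1-2 and 3-1-1, and has MPS refute them, together with the forced continuations (2-1 must continue as 2-1-3, and 3-1 as 3-1-2) and the observation that a dominating set cannot begin 2-1-3. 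Your amortized formulation is a legitimate, arguably more explicit, version of this, but one warning if you execute it: the literal sliding-window claim that every four consecutive columns carry at most one unit of savings does not follow from window arithmetic alone, since the count string 3-1-2-2-1-3 is consistent with the forced continuations yet its middle window 1-2-2-1 has savings $2$; the correct amortization charges each 1-column to the 3-column that the MPS facts force beside it (each 3 absorbing at most two 1's, with blocks 2-1-3-1-2 overlapping only in their flanking 2's), which is exactly the content of the paper's two named patterns. You should also dispose of 0-columns explicitly (a vertex in an empty column needs both horizontal neighbors in $S$, forcing the savings-negative configuration 4-0-4), a case you and, strictly speaking, the paper both leave tacit. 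With those repairs your plan closes along the paper's lines: the approaches are essentially the same, yours making explicit the density-plus-boundary accounting that the paper compresses into the single assertion that any more efficient pattern must contain 2-1-2 or 3-1-1.
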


\begin{proof}
We will show that the dominating set $D_n$ described above and depicted in Figure \ref{fig:4xn} has the minimum cardinality of any (2,2) broadcast dominating set of $G_{4,n}.$
Thus its cardinality is the (2,2) broadcast domination number of $G_{4,n}$.  We will begin by showing this cardinality is given by the above formula.

It is easy to verify that for $n < 7$ the number of vertices in the dominating sets $D_n$ is $2n - \left \lceil\frac{n-6}{4} \right \rceil $.
When $n \geq 7$ the construction of $D_{n}$ from $D_{n-1}$ implies that their cardinalities satisfy the formula
\[ |D_n | = \begin{cases} |D_{n-1}| +1 &  \text{ if  } n \equiv 3\text{ mod } 4 \text{ and } n\ge 7 \\ |D_{n-1}|+2 & \text{ otherwise } \end{cases}.\] 
 Furthermore, for $n \geq 7$ the numbers $2n - \left \lceil\frac{n-6}{4} \right \rceil$ satisfy the same equations.
 Hence, a simple induction argument shows that the given formula counts the number of vertices in this dominating set $D_n$.

Next we will prove that this pattern is the most efficient way to dominate a $4 \times n$ grid $G_{4,n}$ by showing that no pattern with fewer vertices can dominate $G_{4,n}$.  
For a pattern to be more efficient than the one given, it must contain one of the subpatterns 2-1-2 or 3-1-1.  
However, similar to $3\times n$ domination, we used the MPS algorithm to examine all possible patterns that start with a 2-1 and showed that 2-1-3 is the smallest pattern that dominates three consecutive rows.  
The program also returned that any pattern starting with 3-1 must proceed as 3-1-2.    

In particular, Figure \ref{21} shows the only subpattern starting 2-1 that satisfies Property 1 of a dominating subpattern.
The vertices in the second column that are circled in red all have weight $1$ and the one in the third column has weight 0. 
(Hence this set fails to satisfy Property 2 of a dominating subpattern.)
These circled vertices require three vertices in column 3 to be selected.

\begin{figure}[H]
\centering
\begin{picture}(155,30)(0,0)
	\linethickness{.1 mm}
	\multiput(5,0)(0,10){4}{\line(1,0){45}}
	\multiput(10,0)(10,0){4}{\line(0,1){30}} 
	\put(10,30){\color{blue}{\circle*{5}}}
	\put(10,0){\color{blue}{\circle*{5}}}
	\put(20,10){\color{blue}{\circle*{5}}}
	\put(20,30){\color{red}{\circle{5}}}
	\put(20,20){\color{red}{\circle{5}}}
	\put(30,0){\color{red}{\circle{5}}}
	\multiput(105,0)(0,10){4}{\line(1,0){45}}
	\multiput(110,0)(10,0){4}{\line(0,1){30}} 
	\put(110,30){\color{blue}{\circle*{5}}}
	\put(110,0){\color{blue}{\circle*{5}}}
	\put(120,10){\color{blue}{\circle*{5}}}
	\put(130,30){\color{blue}{\circle*{5}}}
	\put(130,20){\color{blue}{\circle*{5}}}
	\put(130,0){\color{blue}{\circle*{5}}}
\end{picture}
\caption{Example of 2-1 pattern}  \label{21}
\end{figure}
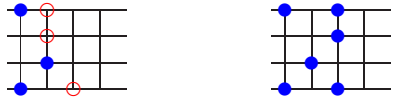
Any other 2-1 configuration does not dominate the first two columns. 
The MPS program also verifies that this is the case for all ${ 4 \choose 2}{4 \choose 1}{4 \choose 2}$ subsets starting with the pattern 2-1-2.  
Hence, a 2-1 configuration must lead to a 2-1-3 configuration. 
Thus, the smallest dominating subpattern that starts with 2-1 is 2-1-3.

Now consider the case where the graph is dominated by a set containing a column with 3 vertices and then a column with 1 vertex. 
Then the vertices circled in red will have a reception strength of $1$ or $0$ depending on whether they are in the second or third column respectively.  (Without loss of generality, these two vertices exist, although their locations may change.)
They each require a vertex in the third column to be selected to dominate them.  

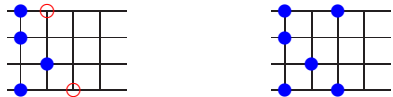
\begin{figure}[H]
\centering
\begin{picture}(155,30)(0,0)
	\linethickness{.1 mm}
	\multiput(5,0)(0,10){4}{\line(1,0){45}}
	\multiput(10,0)(10,0){4}{\line(0,1){30}}
	\put(10,30){\color{blue}{\circle*{5}}}
	\put(10,20){\color{blue}{\circle*{5}}}
	\put(10,0){\color{blue}{\circle*{5}}}
	\put(20,10){\color{blue}{\circle*{5}}}
	\put(30,0){\color{red}{\circle{5}}}
    \put(20,30){\color{red}{\circle{5}}}
	\multiput(105,0)(0,10){4}{\line(1,0){45}}
	\multiput(110,0)(10,0){4}{\line(0,1){30}} 
	\put(110,30){\color{blue}{\circle*{5}}}
	\put(110,0){\color{blue}{\circle*{5}}}
	\put(110,20){\color{blue}{\circle*{5}}}
	\put(120,10){\color{blue}{\circle*{5}}}
	\put(130,30){\color{blue}{\circle*{5}}}
	\put(130,0){\color{blue}{\circle*{5}}}
\end{picture}
\caption{Example of a 3-1 pattern}
\end{figure}
The MPS program verifies that no pattern containing 3-1-1 or 2-1-2 can dominate $G_{4,n}$.  
Hence the dominating sets $D_n$ given above, which use the subpattern 2-1-3-1 as much as possible, are the smallest (2,2) broadcast dominating sets of $G_{4,n}$. 
\end{proof}

\subsection{(2,2) Domination of 5 by n Grids}
In this section, we provide a pattern that determines a dominating set for any $5 \times n$ grid. 
Then we prove that it is the smallest dominating set of $G_{5,n}$, and thereby determine the (2,2) broadcast domination number for each grid.

For small cases with $n \leq 10$ we provide a pattern in Figure \ref{5n}.

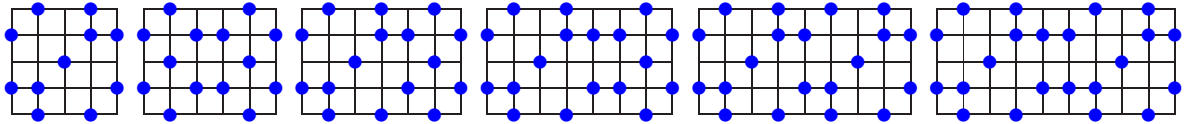
\begin{figure}[H]
\centering
\begin{picture}(450,40)(0,0)
\linethickness{.1 mm}
	\multiput(10,0)(0,10){5}{\line(1,0){40}}
	\multiput(10,0)(10,0){5}{\line(0,1){40}}
	\put(10,30){\color{blue}{\circle*{5}}}
	\put(10,10){\color{blue}{\circle*{5}}}
	\put(20,40){\color{blue}{\circle*{5}}}
	\put(20,0){\color{blue}{\circle*{5}}}
	\put(20,10){\color{blue}{\circle*{5}}}
	\put(30,20){\color{blue}{\circle*{5}}}
	\put(40,30){\color{blue}{\circle*{5}}}
	\put(40,40){\color{blue}{\circle*{5}}}
	\put(40,0){\color{blue}{\circle*{5}}}
	\put(50,30){\color{blue}{\circle*{5}}}
	\put(50,10){\color{blue}{\circle*{5}}}
	\multiput(60,0)(0,10){5}{\line(1,0){50}}
	\multiput(60,0)(10,0){6}{\line(0,1){40}}
	\put(60,30){\color{blue}{\circle*{5}}}
	\put(60,10){\color{blue}{\circle*{5}}}
	\put(70,40){\color{blue}{\circle*{5}}}
	\put(70,0){\color{blue}{\circle*{5}}}
	\put(70,20){\color{blue}{\circle*{5}}}
	\put(80,10){\color{blue}{\circle*{5}}}
	\put(80,30){\color{blue}{\circle*{5}}}
	\put(90,10){\color{blue}{\circle*{5}}}
	\put(90,30){\color{blue}{\circle*{5}}}
	\put(100,0){\color{blue}{\circle*{5}}}
	\put(100,20){\color{blue}{\circle*{5}}}
	\put(100,40){\color{blue}{\circle*{5}}}
	\put(110,10){\color{blue}{\circle*{5}}}
	\put(110,30){\color{blue}{\circle*{5}}}
	\multiput(120,0)(0,10){5}{\line(1,0){60}}
	\multiput(120,0)(10,0){7}{\line(0,1){40}}
	\put(120,30){\color{blue}{\circle*{5}}}
	\put(120,10){\color{blue}{\circle*{5}}}
	\put(130,40){\color{blue}{\circle*{5}}}
	\put(130,0){\color{blue}{\circle*{5}}}
	\put(130,10){\color{blue}{\circle*{5}}}
	\put(140,20){\color{blue}{\circle*{5}}}
	\put(150,30){\color{blue}{\circle*{5}}}
	\put(150,40){\color{blue}{\circle*{5}}}
	\put(150,0){\color{blue}{\circle*{5}}}
	\put(160,30){\color{blue}{\circle*{5}}}
	\put(160,10){\color{blue}{\circle*{5}}}
	\put(170,40){\color{blue}{\circle*{5}}}
	\put(170,20){\color{blue}{\circle*{5}}}
	\put(170,0){\color{blue}{\circle*{5}}}
	\put(180,30){\color{blue}{\circle*{5}}}
	\put(180,10){\color{blue}{\circle*{5}}}
	\multiput(190,0)(0,10){5}{\line(1,0){70}}
	\multiput(190,0)(10,0){8}{\line(0,1){40}}
	\put(190,30){\color{blue}{\circle*{5}}}
	\put(190,10){\color{blue}{\circle*{5}}}
	\put(200,40){\color{blue}{\circle*{5}}}
	\put(200,0){\color{blue}{\circle*{5}}}
	\put(200,10){\color{blue}{\circle*{5}}}
	\put(210,20){\color{blue}{\circle*{5}}}
	\put(220,30){\color{blue}{\circle*{5}}}
	\put(220,40){\color{blue}{\circle*{5}}}
	\put(220,0){\color{blue}{\circle*{5}}}
	\put(230,30){\color{blue}{\circle*{5}}}
	\put(230,10){\color{blue}{\circle*{5}}}
	\put(240,30){\color{blue}{\circle*{5}}}
	\put(240,10){\color{blue}{\circle*{5}}}
	\put(250,40){\color{blue}{\circle*{5}}}
	\put(250,20){\color{blue}{\circle*{5}}}
	\put(250,0){\color{blue}{\circle*{5}}}
	\put(260,30){\color{blue}{\circle*{5}}}
	\put(260,10){\color{blue}{\circle*{5}}}
	\multiput(270,0)(0,10){5}{\line(1,0){80}}
	\multiput(270,0)(10,0){9}{\line(0,1){40}}
	\put(270,30){\color{blue}{\circle*{5}}}
	\put(270,10){\color{blue}{\circle*{5}}}
	\put(280,40){\color{blue}{\circle*{5}}}
	\put(280,0){\color{blue}{\circle*{5}}}
	\put(280,10){\color{blue}{\circle*{5}}}
	\put(290,20){\color{blue}{\circle*{5}}}
	\put(300,30){\color{blue}{\circle*{5}}}
	\put(300,40){\color{blue}{\circle*{5}}}
	\put(300,0){\color{blue}{\circle*{5}}}
	\put(310,30){\color{blue}{\circle*{5}}}
	\put(310,10){\color{blue}{\circle*{5}}}
	\put(320,0){\color{blue}{\circle*{5}}}
	\put(320,10){\color{blue}{\circle*{5}}}
	\put(320,40){\color{blue}{\circle*{5}}}
	\put(330,20){\color{blue}{\circle*{5}}}
	\put(340,40){\color{blue}{\circle*{5}}}
	\put(340,30){\color{blue}{\circle*{5}}}
	\put(340,0){\color{blue}{\circle*{5}}}
	\put(350,30){\color{blue}{\circle*{5}}}
	\put(350,10){\color{blue}{\circle*{5}}}
	\multiput(360,0)(0,10){5}{\line(1,0){90}}
	\multiput(360,0)(10,0){10}{\line(0,1){40}}
	\put(360,30){\color{blue}{\circle*{5}}}
	\put(360,10){\color{blue}{\circle*{5}}}
	\put(370,40){\color{blue}{\circle*{5}}}
	\put(370,0){\color{blue}{\circle*{5}}}
	\put(370,10){\color{blue}{\circle*{5}}}
	\put(380,20){\color{blue}{\circle*{5}}}
	\put(390,30){\color{blue}{\circle*{5}}}
	\put(390,40){\color{blue}{\circle*{5}}}
	\put(390,0){\color{blue}{\circle*{5}}}
	\put(400,30){\color{blue}{\circle*{5}}}
	\put(400,10){\color{blue}{\circle*{5}}}
	\put(410,30){\color{blue}{\circle*{5}}}
	\put(410,10){\color{blue}{\circle*{5}}}
	\put(420,40){\color{blue}{\circle*{5}}}
	\put(420,0){\color{blue}{\circle*{5}}}
	\put(420,10){\color{blue}{\circle*{5}}}
	\put(430,20){\color{blue}{\circle*{5}}}
	\put(440,30){\color{blue}{\circle*{5}}}
	\put(440,40){\color{blue}{\circle*{5}}}
	\put(440,0){\color{blue}{\circle*{5}}}
	\put(450,30){\color{blue}{\circle*{5}}}
	\put(450,10){\color{blue}{\circle*{5}}}
\end{picture} 
\caption{Dominating sets for small cases $G_{5,n}$} \label{5n}
\end{figure}
Next we describe how to construct a dominating set for $G_{5,n}$ when $n > 10$.
We start and end the graph with 2-3-1-3 and 3-1-3-2 respectively as shown in Figure \ref{ends}.

\begin{figure}[H]
\centering
\begin{picture}(210,40)(0,0)
	\multiput(0,0)(0,10){5}{\line(1,0){190}}
	\multiput(0,0)(10,0){20}{\line(0,1){40}}
	\put(0,10){\color{blue}{\circle*{5}}}
	\put(0,30){\color{blue}{\circle*{5}}}
	\put(10,0){\color{blue}{\circle*{5}}}
	\put(10,10){\color{blue}{\circle*{5}}}
	\put(10,40){\color{blue}{\circle*{5}}}
	\put(20,20){\color{blue}{\circle*{5}}}
	\put(30,0){\color{blue}{\circle*{5}}} 
	\put(30,30){\color{blue}{\circle*{5}}}
	\put(30,40){\color{blue}{\circle*{5}}}
	\put(160,0){\color{blue}{\circle*{5}}}
	\put(160,10){\color{blue}{\circle*{5}}}
	\put(160,40){\color{blue}{\circle*{5}}}
	\put(170,20){\color{blue}{\circle*{5}}}
	\put(180,0){\color{blue}{\circle*{5}}}
	\put(180,30){\color{blue}{\circle*{5}}}
	\put(180,40){\color{blue}{\circle*{5}}}
	\put(190,10){\color{blue}{\circle*{5}}}
	\put(190,30){\color{blue}{\circle*{5}}}
\end{picture}
\caption{The two end patterns}  \label{ends}
\end{figure}
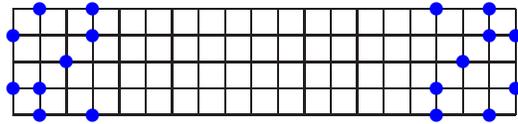
Then moving from left to right we fill in the open columns with the pattern
 2-2-2-2-3-1-3 until it no longer fits, as shown in Figure \ref{first}. Unlike the $4\times n$ case, this pattern does not require an orientation change when it is inserted multiple times.

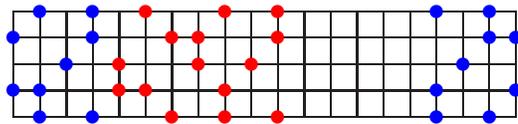
\begin{figure}[H]
\centering
\begin{picture}(210,40)(0,0)
	\multiput(0,0)(0,10){5}{\line(1,0){190}}
	\multiput(0,0)(10,0){20}{\line(0,1){40}}
	\put(0,10){\color{blue}{\circle*{5}}}
	\put(0,30){\color{blue}{\circle*{5}}}
	\put(10,0){\color{blue}{\circle*{5}}}
	\put(10,10){\color{blue}{\circle*{5}}}
	\put(10,40){\color{blue}{\circle*{5}}}
	\put(20,20){\color{blue}{\circle*{5}}}
	\put(30,0){\color{blue}{\circle*{5}}}
	\put(30,30){\color{blue}{\circle*{5}}}
	\put(30,40){\color{blue}{\circle*{5}}}
	\put(40,20){\color{red}{\circle*{5}}}
	\put(40,10){\color{red}{\circle*{5}}}
	\put(50,40){\color{red}{\circle*{5}}}
	\put(50,10){\color{red}{\circle*{5}}}
	\put(60,30){\color{red}{\circle*{5}}}
	\put(60,0){\color{red}{\circle*{5}}}
	\put(70,30){\color{red}{\circle*{5}}}
	\put(70,20){\color{red}{\circle*{5}}}
	\put(80,0){\color{red}{\circle*{5}}}
	\put(80,10){\color{red}{\circle*{5}}}
	\put(80,40){\color{red}{\circle*{5}}}
	\put(90,20){\color{red}{\circle*{5}}}	
	\put(100,0){\color{red}{\circle*{5}}}	
	\put(100,30){\color{red}{\circle*{5}}}	
	\put(100,40){\color{red}{\circle*{5}}}	
	\put(160,0){\color{blue}{\circle*{5}}}
	\put(160,10){\color{blue}{\circle*{5}}}
	\put(160,40){\color{blue}{\circle*{5}}}
	\put(170,20){\color{blue}{\circle*{5}}}
	\put(180,0){\color{blue}{\circle*{5}}}
	\put(180,30){\color{blue}{\circle*{5}}}
	\put(180,40){\color{blue}{\circle*{5}}}
	\put(190,10){\color{blue}{\circle*{5}}}
	\put(190,30){\color{blue}{\circle*{5}}}
\end{picture}
\caption{Filling the empty columns with 2-2-2-2-3-1-3} \label{first}
\end{figure}

When less than seven columns remain undominated, we fill in the remaining columns with the patterns depicted in Figures \ref{2-patterns} and \ref{fillpattern}. 

\begin{figure}[H]
\centering
\begin{picture}(330,40)(0,0)
	\multiput(0,0)(0,10){5}{\line(1,0){60}}
	\multiput(10,0)(10,0){5}{\line(0,1){40}}
	\put(10,20){\color{blue}{\circle*{5}}}
	\put(20,0){\color{blue}{\circle*{5}}}
	\put(20,30){\color{blue}{\circle*{5}}}
	\put(20,40){\color{blue}{\circle*{5}}}
	\put(30,10){\color{red}{\circle*{5}}}
	\put(30,30){\color{red}{\circle*{5}}}
	\put(40,0){\color{blue}{\circle*{5}}}
	\put(40,10){\color{blue}{\circle*{5}}}
	\put(40,40){\color{blue}{\circle*{5}}}
	\put(50,20){\color{blue}{\circle*{5}}}
	\multiput(70,0)(0,10){5}{\line(1,0){70}}
	\multiput(80,0)(10,0){6}{\line(0,1){40}}
	\put(80,20){\color{blue}{\circle*{5}}}
	\put(90,0){\color{blue}{\circle*{5}}}
	\put(90,30){\color{blue}{\circle*{5}}}
	\put(90,40){\color{blue}{\circle*{5}}}
	\put(100,10){\color{red}{\circle*{5}}}
	\put(100,30){\color{red}{\circle*{5}}}
	\put(110,10){\color{red}{\circle*{5}}}
	\put(110,30){\color{red}{\circle*{5}}}
	\put(120,0){\color{blue}{\circle*{5}}}
	\put(120,10){\color{blue}{\circle*{5}}}
	\put(120,40){\color{blue}{\circle*{5}}}
	\put(130,20){\color{blue}{\circle*{5}}}
	\multiput(150,0)(0,10){5}{\line(1,0){80}}
	\multiput(160,0)(10,0){7}{\line(0,1){40}}
	\put(160,20){\color{blue}{\circle*{5}}}
	\put(170,0){\color{blue}{\circle*{5}}}
	\put(170,30){\color{blue}{\circle*{5}}}
	\put(170,40){\color{blue}{\circle*{5}}}
	\put(180,20){\color{red}{\circle*{5}}}
	\put(180,30){\color{red}{\circle*{5}}}
	\put(190,0){\color{red}{\circle*{5}}}
	\put(190,40){\color{red}{\circle*{5}}}
	\put(200,10){\color{red}{\circle*{5}}}
	\put(200,20){\color{red}{\circle*{5}}}
	\put(210,0){\color{blue}{\circle*{5}}}
	\put(210,10){\color{blue}{\circle*{5}}}
	\put(210,40){\color{blue}{\circle*{5}}}
	\put(220,20){\color{blue}{\circle*{5}}}
	\multiput(240,0)(0,10){5}{\line(1,0){90}}
	\multiput(250,0)(10,0){8}{\line(0,1){40}}
	\put(250,20){\color{blue}{\circle*{5}}}
	\put(260,0){\color{blue}{\circle*{5}}}
	\put(260,30){\color{blue}{\circle*{5}}}
	\put(260,40){\color{blue}{\circle*{5}}}
	\put(270,20){\color{red}{\circle*{5}}}
	\put(270,30){\color{red}{\circle*{5}}}
	\put(280,0){\color{red}{\circle*{5}}}
	\put(280,30){\color{red}{\circle*{5}}}
	\put(290,10){\color{red}{\circle*{5}}}
	\put(290,40){\color{red}{\circle*{5}}}
	\put(300,10){\color{red}{\circle*{5}}}
	\put(300,20){\color{red}{\circle*{5}}}
	\put(310,0){\color{blue}{\circle*{5}}}
	\put(310,10){\color{blue}{\circle*{5}}}
	\put(310,40){\color{blue}{\circle*{5}}}
	\put(320,20){\color{blue}{\circle*{5}}}
\end{picture}
\caption{Patterns for filling 1, 2, 3, and 4 columns} \label{2-patterns}
\end{figure}
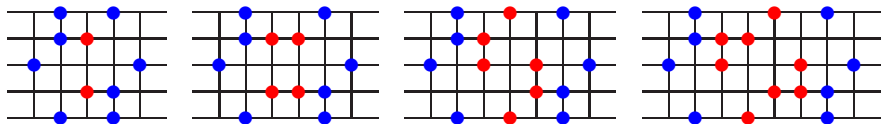

\begin{figure}[H]
\centering
\begin{picture}(330,40)(-50,0)

	\multiput(120,0)(0,10){5}{\line(1,0){110}}
	\multiput(130,0)(10,0){10}{\line(0,1){40}}
        \put(130,20){\color{blue}{\circle*{5}}}
        \put(140,0){\color{blue}{\circle*{5}}}
        \put(140,30){\color{blue}{\circle*{5}}}
	\put(140,40){\color{blue}{\circle*{5}}}
        \put(150,30){\color{red}{\circle*{5}}}
	\put(150,10){\color{red}{\circle*{5}}}
	\put(160,30){\color{red}{\circle*{5}}}
	\put(160,10){\color{red}{\circle*{5}}}
	\put(170,40){\color{red}{\circle*{5}}}
	\put(170,0){\color{red}{\circle*{5}}}
	\put(170,10){\color{red}{\circle*{5}}}
	\put(180,20){\color{red}{\circle*{5}}}
	\put(190,30){\color{red}{\circle*{5}}}
	\put(190,40){\color{red}{\circle*{5}}}
	\put(190,0){\color{red}{\circle*{5}}}
	\put(200,30){\color{red}{\circle*{5}}}
	\put(200,10){\color{red}{\circle*{5}}}
	\put(210,0){\color{blue}{\circle*{5}}}
	\put(210,10){\color{blue}{\circle*{5}}}
	\put(210,40){\color{blue}{\circle*{5}}}
	\put(220,20){\color{blue}{\circle*{5}}}

	\multiput(0,0)(0,10){5}{\line(1,0){100}}
	\multiput(10,0)(10,0){9}{\line(0,1){40}}
        \put(10,20){\color{blue}{\circle*{5}}}
        \put(20,0){\color{blue}{\circle*{5}}}
        \put(20,30){\color{blue}{\circle*{5}}}
	\put(20,40){\color{blue}{\circle*{5}}}
        \put(30,30){\color{red}{\circle*{5}}}
	\put(30,10){\color{red}{\circle*{5}}}
	\put(40,40){\color{red}{\circle*{5}}}
	\put(40,0){\color{red}{\circle*{5}}}
	\put(40,10){\color{red}{\circle*{5}}}
	\put(50,20){\color{red}{\circle*{5}}}
	\put(60,30){\color{red}{\circle*{5}}}
	\put(60,40){\color{red}{\circle*{5}}}
	\put(60,0){\color{red}{\circle*{5}}}
	\put(70,30){\color{red}{\circle*{5}}}
	\put(70,10){\color{red}{\circle*{5}}}
	\put(80,0){\color{blue}{\circle*{5}}}
	\put(80,10){\color{blue}{\circle*{5}}}
	\put(80,40){\color{blue}{\circle*{5}}}
	\put(90,20){\color{blue}{\circle*{5}}}

\end{picture} 
\caption{Patterns for filling 5 and 6 columns} \label{fillpattern}
\end{figure}
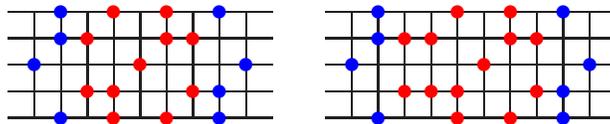

For instance, to complete the dominating set appearing in Figure \ref{first}, 
we observe that there are 5 columns to be dominated.  Hence 2-3-1-3-2 is the appropriate pattern to complete the dominating set, seen in Figure \ref{again}.

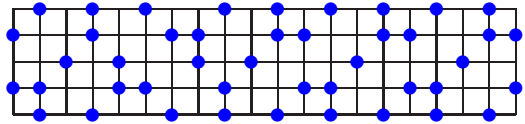
\begin{figure}[H]
\centering
\begin{picture}(210,40)(0,0)
	\multiput(0,0)(0,10){5}{\line(1,0){190}}
	\multiput(0,0)(10,0){20}{\line(0,1){40}}
	\put(0,10){\color{blue}{\circle*{5}}}
	\put(0,30){\color{blue}{\circle*{5}}}
	\put(10,0){\color{blue}{\circle*{5}}}
	\put(10,10){\color{blue}{\circle*{5}}}
	\put(10,40){\color{blue}{\circle*{5}}}
	\put(20,20){\color{blue}{\circle*{5}}}
	\put(30,0){\color{blue}{\circle*{5}}}
	\put(30,30){\color{blue}{\circle*{5}}}
	\put(30,40){\color{blue}{\circle*{5}}}
	\put(40,20){\color{blue}{\circle*{5}}}
	\put(40,10){\color{blue}{\circle*{5}}}
	\put(50,40){\color{blue}{\circle*{5}}}
	\put(50,10){\color{blue}{\circle*{5}}}
	\put(60,30){\color{blue}{\circle*{5}}}
	\put(60,0){\color{blue}{\circle*{5}}}
	\put(70,30){\color{blue}{\circle*{5}}}
	\put(70,20){\color{blue}{\circle*{5}}}
	\put(80,0){\color{blue}{\circle*{5}}}
	\put(80,10){\color{blue}{\circle*{5}}}
	\put(80,40){\color{blue}{\circle*{5}}}
	\put(90,20){\color{blue}{\circle*{5}}}	
	\put(100,0){\color{blue}{\circle*{5}}}	
	\put(100,30){\color{blue}{\circle*{5}}}	
	\put(100,40){\color{blue}{\circle*{5}}}	
	\put(110,30){\color{blue}{\circle*{5}}}
	\put(110,10){\color{blue}{\circle*{5}}}
	\put(120,40){\color{blue}{\circle*{5}}}
	\put(120,0){\color{blue}{\circle*{5}}}
	\put(120,10){\color{blue}{\circle*{5}}}
	\put(130,20){\color{blue}{\circle*{5}}}
	\put(140,30){\color{blue}{\circle*{5}}}
	\put(140,40){\color{blue}{\circle*{5}}}
	\put(140,0){\color{blue}{\circle*{5}}}
	\put(150,30){\color{blue}{\circle*{5}}}
	\put(150,10){\color{blue}{\circle*{5}}}	
	\put(160,0){\color{blue}{\circle*{5}}}
	\put(160,10){\color{blue}{\circle*{5}}}
	\put(160,40){\color{blue}{\circle*{5}}}
	\put(170,20){\color{blue}{\circle*{5}}}
	\put(180,0){\color{blue}{\circle*{5}}}
	\put(180,30){\color{blue}{\circle*{5}}}
	\put(180,40){\color{blue}{\circle*{5}}}
	\put(190,10){\color{blue}{\circle*{5}}}
	\put(190,30){\color{blue}{\circle*{5}}}
\end{picture}
\caption{ Completing the dominating set of $G_{5,20}$ } \label{again}
\end{figure}


Following this process will generate a dominating set for any $5 \times n$ grid.
The next result proves that this set is a minimal dominating set for $G_{m,n}$.  
\begin{theorem}
Let $n\ge 5$.  The $(2,2)$  broadcast domination number of the $5 \times n$ grid is 
\[ \gamma_{2,2} (G_{5,n} ) = 2 n + \left \lceil  \frac{n+2}{7} \right \rceil.\]
\end{theorem}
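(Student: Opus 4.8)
The plan is to follow the same two-part template used for the $3\times n$ and $4\times n$ grids: first verify that the constructed sets $D_n$ have cardinality $2n+\lceil (n+2)/7\rceil$, and then show that no $(2,2)$ broadcast dominating set of $G_{5,n}$ can be smaller. The cardinality count is a routine strong induction. Reading the small cases off Figures~\ref{5n}, \ref{2-patterns}, and~\ref{fillpattern} verifies the formula for $5\le n\le 11$, which serve as base cases. The recursive construction inserts one copy of the interior block $2$-$2$-$2$-$2$-$3$-$1$-$3$, which occupies $7$ columns and contributes $2+2+2+2+3+1+3=15$ vertices, so $|D_{n+7}|=|D_n|+15$. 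Since $2(n+7)+\lceil (n+9)/7\rceil = 2n+\lceil(n+2)/7\rceil+15$, the closed form obeys the same recursion, and induction closes the count.

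For the minimality half I would first record a simplifying observation that follows directly from the definitions: for $(t,r)=(2,2)$ a broadcasting vertex transmits strength $1$ only to its grid-neighbours and strength $2$ to itself, so a vertex $v\notin S$ satisfies $r(v)\ge 2$ exactly when at least two of its neighbours lie in $S$. Thus $(2,2)$ broadcast domination coincides with ordinary $2$-domination, and in particular a column containing only one broadcasting vertex (a \emph{$1$-column}) leaves most of its cells to be covered entirely by the two adjacent columns. Writing $c_1,\dots,c_n$ for the column counts and setting the surplus $s_j=c_j-2$, the desired bound $\gamma_{2,2}(G_{5,n})\ge 2n+\lceil(n+2)/7\rceil$ is equivalent to $\sum_j s_j\ge \lceil (n+2)/7\rceil$. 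One checks that the construction attains equality: the two end caps $2$-$3$-$1$-$3$ and $3$-$1$-$3$-$2$ each have surplus $0+1-1+1=+1$ (total $+2$), while every interior block $2$-$2$-$2$-$2$-$3$-$1$-$3$ has surplus exactly $+1$ over $7$ columns, and indeed $2+k=\lceil(10+7k)/7\rceil$ when $n=8+7k$.

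The core is to show this rate cannot be beaten. I would use the MPS algorithm to enumerate the $(2,2)$ dominating subpatterns on windows of a fixed small length and certify that no admissible interior window realises average surplus below $1/7$; concretely, feeding MPS the column profiles whose total lies below the threshold and confirming that none satisfies the two necessary properties of a dominating subpattern rules out every cheaper repeating block. A discharging/sliding-window argument on the sequence $(s_j)$ then converts this local density certificate into the global inequality: each $1$-column's deficit is transported onto the heavy ($3$-)columns that MPS shows must flank it, so that every unit of saving is paid for by at least seven columns of width. The boundary behaviour, which fixes the exact constant inside the ceiling, is handled separately: MPS confirms that the cheapest admissible left-hand profiles are exactly the end patterns forced by Property~2 (the first few columns cannot be lightened because they must carry enough reception to be extendable), contributing the extra $+2$ in $\lceil (n+2)/7\rceil$.

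I expect the minimality half, and within it the passage from the per-window density bound to the precise additive constant, to be the main obstacle. Local certification by MPS is mechanical, but because the necessary properties only fully constrain the \emph{interior} columns of a subpattern, one must work with windows slightly longer than the period and amortise overlapping windows with care; in particular one must rule out that savings from two well-separated $1$-columns could combine to undercut the periodic rate, and one must show the end caps are genuinely \emph{forced} rather than merely sufficient. Organising the amortisation as a discharging scheme that pins each $1$-column inside a local ``$\cdots 3\,1\,3\cdots$'' configuration, together with the MPS-verified boundary analysis, is what I anticipate will require the most delicate bookkeeping.
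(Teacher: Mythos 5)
Your plan matches the paper's proof in its skeleton --- the same construction, the same induction for the cardinality count (the paper phrases the recursion as $|D_n|=|D_{n-1}|+3$ when $n\equiv 6 \bmod 7$ and $|D_n|=|D_{n-1}|+2$ otherwise, which is equivalent to your $|D_{n+7}|=|D_n|+15$), and the same reliance on MPS-certified local facts for minimality --- but you organize the minimality half genuinely differently. The paper argues by a direct chain of subpattern exclusions: any dominating set must place five vertices in the first two columns, and the only viable end is $2$-$3$-$1$-$3$ (since $0$-$5$ and $1$-$4$ fail outright, while $5$-$0$ and $4$-$1$ dominate less of the third column); MPS then certifies that any $1$-column must be flanked by columns with at least three vertices, that $3$-$1$-$3$ is the unique such configuration up to a vertical flip, that at most four consecutive $2$-columns can sit between $3$-$1$-$3$ blocks (the pattern $3$-$1$-$3$-$2$-$2$-$2$-$2$-$2$-$3$-$1$-$3$ is not dominating), and that $2$-$1$ is not a dominating subpattern; from these the paper simply concludes that repeating $2$-$2$-$2$-$2$-$3$-$1$-$3$ is most efficient, with no explicit global amortization. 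Your reformulation via the surplus $s_j=c_j-2$, the equivalence of the lower bound with $\sum_j s_j \geq \left\lceil \frac{n+2}{7}\right\rceil$, and a discharging scheme makes that final, implicit step explicit; your opening observation that $(2,2)$ broadcast domination coincides with ordinary $2$-domination (a broadcaster sends strength $2$ to itself and $1$ to each neighbour) is correct, though nowhere stated in the paper. What your route buys is rigor at exactly the point the paper glosses --- ruling out that savings from well-separated $1$-columns combine to undercut the $1/7$ rate, and showing the end caps are forced rather than merely sufficient; what the paper's route buys is brevity, at the cost of asserting global optimality directly from local certificates. The amortisation you flag as the main obstacle is real work, but the local MPS facts your discharging scheme needs are precisely the ones the paper already certifies, so your plan closes along the same computational backbone.
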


\begin{proof}
Note that for every $n \equiv 6 \mod 7$,  the set $D_{n}$ contains three more vertices than $D_{n-1}$.
Otherwise, for all other integers $n \geq 5$ the set $D_n$ contains just two more vertices than $D_{n-1}$.
A simple induction argument shows that the sum $2\cdot n + \left \lceil  \frac{n+2}{7} \right \rceil$ follows the same pattern, and hence they count the number elements in each set $D_n$. 

Next we show that the sets $D_n$ dominate $G_{5,n}$ and that no smaller set will dominate $G_{5,n}$.
First we justify our choice of the patterns 2-3-1-3 and 3-1-3-2 to dominate the ends of the grid $G_{5,n}$.  
Note that any dominating set must start and end with 5 vertices in the first two columns, so 2-3, 3-2, 4-1, 1-4, 5-0, or 0-5 are the only possible ends for a dominating set. 
However, 0-5 and 1-4 will not dominate the first two columns, and while the patterns 5-0 and 4-1 dominate the first two columns, they do not dominate as much of the third column
as 2-3.  Hence any minimal dominating set should begin and end with 2-3 and 3-2.
The pattern must continue 2-3-1 and 1-3-2 because 2-3-0 does not satisfy the properties of a dominating pattern. 
Furthermore we have determined, using the MPS algorithm, that any column with only one vertex must have 3 or more vertices in each adjacent column.  
Hence the most efficient pattern containing a column with just one vertex is the pattern 3-1-3 that is depicted in Figure \ref{3-1-3}. 
In fact the MPS algorithm affirms that, up to a vertical flip across the third row, this is the only
dominating subpattern of the form 3-1-3.

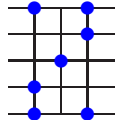
\begin{figure}[H]
\centering
\begin{picture}(40,40)(0,0)
	\multiput(0,0)(0,10){5}{\line(1,0){40}}
	\multiput(10,0)(10,0){3}{\line(0,1){40}}
	\put(10,0){\color{blue}{\circle*{5}}}
	\put(10,10){\color{blue}{\circle*{5}}}
	\put(10,40){\color{blue}{\circle*{5}}}
	\put(20,20){\color{blue}{\circle*{5}}}
	\put(30,0){\color{blue}{\circle*{5}}}
	\put(30,30){\color{blue}{\circle*{5}}}
	\put(30,40){\color{blue}{\circle*{5}}}
\end{picture}
\caption{The 3-1-3 Pattern}\label{3-1-3}
\end{figure}
Similarly, the program returns that the columns adjacent to the 3-1-3 pattern must each contain at least two vertices, so the pattern 2-3-1-3-2 is 
the smallest five-column subpattern that contains a column with just one vertex.

Next we justify our choice of the filling subpattern 2-2-2-2-3-1-3.  Note the pattern  2-2-2-2-2 uses one less vertex than 2-3-1-3-2. 
Thus it is preferable to use a pattern with as many columns containing 2 vertices as possible.
Calculations in the MPS program show that if we have four or fewer adjacent columns with just two vertices, they can be surrounded by 3-1-3 patterns.
This is the pattern used in our construction of $D_n$ in Figure \ref{2-patterns}.
Moreover, the MPS program shows that the pattern 3-1-3-2-2-2-2-2-3-1-3 with five adjacent columns with two vertices in each is not a dominating pattern.
It also shows that 2-1 is not a dominating pattern.  
Hence repeating the pattern 2-2-2-2-3-1-3 is the most efficient way to dominate the interior of the grid. 
\end{proof}

\subsection{(3,1) broadcast domination numbers of 3 by n grids}

The minimal $(3,1)$ broadcast dominating set $D_n$ follows a very simple pattern.  
As shown in Figure \ref{31patterns3n}, it contains every third vertex in the middle row of $G_{3,n}$ starting in the second column.
The cardinality of this set is described by $|D_n| = \left \lceil \frac{n}{3} \right \rceil$.
\begin{figure}[H]
\centering
\begin{picture}(405,20)(0,0)
	\linethickness{.1 mm}
	\multiput(10,0)(0,10){3}{\line(1,0){20}}
	\multiput(10,0)(10,0){3}{\line(0,1){20}}
	\put(20,10){\color{blue}{\circle*{5}}}

	\multiput(50,0)(0,10){3}{\line(1,0){30}}
	\multiput(50,0)(10,0){4}{\line(0,1){20}}
	
	\put(60,10){\color{blue}{\circle*{5}}}

	\put(80,10){\color{blue}{\circle*{5}}}
	\multiput(100,0)(0,10){3}{\line(1,0){40}}
	\multiput(100,0)(10,0){5}{\line(0,1){20}}
	\put(110,10){\color{blue}{\circle*{5}}}
	\put(130,10){\color{blue}{\circle*{5}}}
	\multiput(160,0)(0,10){3}{\line(1,0){50}}
	\multiput(160,0)(10,0){6}{\line(0,1){20}}
        \put(170,10){\color{blue}{\circle*{5}}}
	\put(200,10){\color{blue}{\circle*{5}}}
	
	\multiput(230,0)(0,10){3}{\line(1,0){60}}
	\multiput(230,0)(10,0){7}{\line(0,1){20}}
	\put(240,10){\color{blue}{\circle*{5}}}
	\put(270,10){\color{blue}{\circle*{5}}}
	\put(290,10){\color{blue}{\circle*{5}}}
	\multiput(310,0)(0,10){3}{\line(1,0){70}}
	\multiput(310,0)(10,0){8}{\line(0,1){20}}
	\put(320,10){\color{blue}{\circle*{5}}}
	\put(350,10){\color{blue}{\circle*{5}}}
	\put(380,10){\color{blue}{\circle*{5}}}

\end{picture}
\caption{(3,1) dominating sets $D_n$ for $G_{3,n}$} \label{31patterns3n}
\end{figure}

\begin{theorem} $Let n \geq 3$. 
The $(3,1)$  broadcast domination number of $G_{3,n}$ is 
\[
 \gamma_{3,1}(G_{3,n}) = \left \lceil \frac{n}{3} \right \rceil .
\]
\end{theorem}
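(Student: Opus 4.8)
The plan is to prove the two inequalities $\gamma_{3,1}(G_{3,n}) \le \lceil n/3\rceil$ and $\gamma_{3,1}(G_{3,n}) \ge \lceil n/3\rceil$ separately. Throughout I will use that, since $t=3$, a broadcasting vertex transmits a nonzero signal exactly to the vertices within distance $2$ of it, and since $r=1$ a set $S$ is a $(3,1)$ broadcast dominating set if and only if every vertex lies within distance $2$ of some element of $S$. In other words, $(3,1)$ broadcast domination of $G_{3,n}$ is precisely distance-$2$ domination, which keeps the combinatorics transparent.

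For the upper bound I would verify that the construction $D_n$ of Figure \ref{31patterns3n} is such a set and that $|D_n| = \lceil n/3\rceil$. The key observation is that a single middle-row vertex $(2,j)$ dominates every vertex of columns $j-1$, $j$, and $j+1$, since each such vertex is within distance $2$; thus one middle-row vertex covers three consecutive columns completely. Placing middle-row vertices at columns $2,5,8,\dots$ therefore tiles the grid three columns at a time, and one checks directly that $\lceil n/3\rceil$ of them suffice, with the rightmost vertex shifted to column $n$ when $n\equiv 1 \pmod 3$ so that it stays inside the grid. Counting the cardinality is then a routine induction, or a direct count split by the residue of $n$ modulo $3$.

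The heart of the argument is the matching lower bound, and here I would use a counting argument on the extremal rows rather than the MPS algorithm. Let $T = \{(1,c) : 1 \le c \le n\} \cup \{(3,c) : 1 \le c \le n\}$ be the $2n$ vertices of the top and bottom rows. I claim that any single broadcasting vertex dominates at most $6$ elements of $T$. Indeed, enumerating by the row of the broadcasting vertex $v=(i,j)$ and counting which $(1,c)$ and $(3,c)$ satisfy $d(v,\cdot)\le 2$: a middle-row vertex reaches $(1,c)$ and $(3,c)$ only for $c \in \{j-1,j,j+1\}$, giving at most $3+3=6$; a top-row vertex reaches $(1,c)$ for $c\in\{j-2,\dots,j+2\}$ but $(3,c)$ only for $c=j$, giving at most $5+1=6$; and a bottom-row vertex is symmetric. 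Near the boundary these counts only drop. Since every $(3,1)$ broadcast dominating set $S$ must dominate all of $T$, we obtain $6\,|S| \ge |T| = 2n$, hence $|S| \ge n/3$, and since $|S|$ is an integer, $|S| \ge \lceil n/3\rceil$.

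Combining the two bounds yields $\gamma_{3,1}(G_{3,n}) = \lceil n/3\rceil$. I expect the only real obstacle to be finding the right lower-bound argument: the naive bound from the total vertex count, $|S|\ge 3n/11$ (since a middle-row vertex dominates at most $11$ of the $3n$ vertices), is too weak. The decisive idea is to restrict attention to the $2n$ top-and-bottom vertices of $T$, for which the per-vertex coverage of $6$ matches the target density $2n/6 = n/3$ exactly; the remaining verifications are routine.
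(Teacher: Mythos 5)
Your proof is correct, and its lower bound is a genuinely different argument from the paper's. The paper proves minimality computationally: its MPS algorithm checks that the subpattern 1-0-0-0-1 forces a middle-column vertex of reception strength $0$, and it then concludes that the pattern 0-1-0-0-1-0-$\cdots$-0-1-0 is the most efficient; this fits the column-pattern framework used throughout the paper but leans on computer verification and, as stated, on the implicit claim that all competing column patterns with smaller total weight have been excluded. You instead prove the bound $|S| \geq \left\lceil \frac{n}{3} \right\rceil$ by a clean double count on the $2n$ vertices of the top and bottom rows: your case check that each broadcasting vertex covers at most $6$ of them is right (a middle-row vertex reaches only $c \in \{j-1,j,j+1\}$ in each outer row, giving $3+3$; an outer-row vertex reaches five positions in its own row but only one in the opposite row, giving $5+1$), and $6|S| \geq 2n$ plus integrality gives exactly $\left\lceil \frac{n}{3}\right\rceil$, matching the upper bound from the same middle-row construction (with the rightmost vertex shifted when $n \equiv 1 \bmod 3$, as in the paper's figure). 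Your identification of $(3,1)$ broadcast domination with distance-$2$ domination is also exactly the equivalence the paper notes in its introduction. What each approach buys: the paper's MPS method is uniform across all the $(t,r)$ and $m$ cases it treats, where ad hoc counting arguments would be painful; your argument is elementary, fully self-contained, and avoids any computation -- indeed it is arguably tighter than the paper's own justification here, since it rules out \emph{all} small sets at once, including those stacking several vertices in one column, without any pattern-by-pattern exclusion.
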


\begin{proof}
One can easily verify that the pattern 0-1-0-0-1-0-0-1-0-$\cdots$-0-1-0  with all of the vertices in $D_n$ selected from the middle row is a dominating set.
 Since every third column contains one vertex in $D_n$, we see that $|D_n| = \left \lceil \frac{n}{3} \right \rceil$.   
 The MPS algorithm verifies that any subset of vertices satisfying the subpattern 1-0-0-0-1 results in one of the vertices in the middle column having reception strength 0. 
 Thus 1-0-0-0-1 is not a dominating subpattern, and  0-1-0-0-1-0-0-1-0-$\cdots$-0-1-0 is the most efficient pattern to dominate $G_{3,n}$. 
\end{proof}

\subsection{(3,1) broadcast domination numbers of 4 by n grids}

In this section, we find a closed formula for the (3,1) broadcast domination number of a $4 \times  n$ grid. Consider the following dominating sets for some small $n$.

\begin{figure}[H]
\centering
\begin{picture}(450,30)(0,0)
	\linethickness{.1 mm}
	\multiput(10,0)(0,10){4}{\line(1,0){30}}
	\multiput(10,0)(10,0){4}{\line(0,1){30}}
	\put(10,10){\color{blue}{\circle*{5}}}
 	\put(30,30){\color{blue}{\circle*{5}}}
 	\put(40,0){\color{blue}{\circle*{5}}}
	\multiput(60,0)(0,10){4}{\line(1,0){40}}
	\multiput(60,0)(10,0){5}{\line(0,1){30}}
 	\put(60,10){\color{blue}{\circle*{5}}}
        \put(90,30){\color{blue}{\circle*{5}}}
        \put(100,0){\color{blue}{\circle*{5}}}

	\multiput(120,0)(0,10){4}{\line(1,0){50}}
 	\multiput(120,0)(10,0){6}{\line(0,1){30}}
 	\put(120,10){\color{blue}{\circle*{5}}}
 	\put(160,0){\color{blue}{\circle*{5}}}
	\put(150,30){\color{blue}{\circle*{5}}}
 	\put(170,20){\color{blue}{\circle*{5}}}
 	
	\multiput(190,0)(0,10){4}{\line(1,0){60}}
	\multiput(190,0)(10,0){7}{\line(0,1){30}}
	\put(190,10){\color{blue}{\circle*{5}}}
	\put(220,30){\color{blue}{\circle*{5}}}
	\put(230,0){\color{blue}{\circle*{5}}}
	\put(250,20){\color{blue}{\circle*{5}}}
	
	\multiput(270,0)(0,10){4}{\line(1,0){70}}
	\multiput(270,0)(10,0){8}{\line(0,1){30}}
	\put(270,10){\color{blue}{\circle*{5}}}
	\put(300,30){\color{blue}{\circle*{5}}}
	\put(310,0){\color{blue}{\circle*{5}}}
	\put(340,20){\color{blue}{\circle*{5}}}
	\multiput(360,0)(0,10){4}{\line(1,0){80}}
	\multiput(360,0)(10,0){9}{\line(0,1){30}}
	\put(360,10){\color{blue}{\circle*{5}}}
	\put(390,30){\color{blue}{\circle*{5}}}
	\put(400,0){\color{blue}{\circle*{5}}}
	\put(430,20){\color{blue}{\circle*{5}}}
	\put(440,10){\color{blue}{\circle*{5}}}
\end{picture}
\end{figure}

\begin{figure}[H]
\centering
\begin{picture}(380,30)(0,0)
	\linethickness{.1 mm}
	\multiput(10,0)(0,10){4}{\line(1,0){90}}
	\multiput(10,0)(10,0){10}{\line(0,1){30}}
	\put(10,10){\color{blue}{\circle*{5}}}
	\put(40,30){\color{blue}{\circle*{5}}}
	\put(50,0){\color{blue}{\circle*{5}}}
	\put(80,20){\color{blue}{\circle*{5}}}
	\put(100,10){\color{blue}{\circle*{5}}}
	\multiput(120,0)(0,10){4}{\line(1,0){100}}
	\multiput(120,0)(10,0){11}{\line(0,1){30}}
	\put(120,10){\color{blue}{\circle*{5}}}
	\put(150,30){\color{blue}{\circle*{5}}}
	\put(160,0){\color{blue}{\circle*{5}}}
	\put(190,20){\color{blue}{\circle*{5}}}
	\put(210,0){\color{blue}{\circle*{5}}}
        \put(220,30){\color{blue}{\circle*{5}}}

	\multiput(240,0)(0,10){4}{\line(1,0){110}}
	\multiput(240,0)(10,0){12}{\line(0,1){30}}
	\put(240,10){\color{blue}{\circle*{5}}}
	\put(270,30){\color{blue}{\circle*{5}}}
	\put(280,0){\color{blue}{\circle*{5}}}
	\put(310,20){\color{blue}{\circle*{5}}}
	\put(340,0){\color{blue}{\circle*{5}}}
	\put(350,30){\color{blue}{\circle*{5}}}
	

\end{picture}
\end{figure}

\begin{figure}[H]
\centering
\begin{picture}(380,30)(40,0)
	\multiput(10,0)(0,10){4}{\line(1,0){120}}
	\multiput(10,0)(10,0){13}{\line(0,1){30}}
	\put(10,10){\color{blue}{\circle*{5}}}
	\put(40,30){\color{blue}{\circle*{5}}}
	\put(50,0){\color{blue}{\circle*{5}}}
	\put(80,20){\color{blue}{\circle*{5}}}
	\put(110,0){\color{blue}{\circle*{5}}}
	\put(120,30){\color{blue}{\circle*{5}}}
        \put(130,10){\color{blue}{\circle*{5}}}
	\multiput(150,0)(0,10){4}{\line(1,0){130}}
	\multiput(150,0)(10,0){14}{\line(0,1){30}}
	\put(150,10){\color{blue}{\circle*{5}}}
	\put(180,30){\color{blue}{\circle*{5}}}
	\put(190,0){\color{blue}{\circle*{5}}}
	\put(220,20){\color{blue}{\circle*{5}}}
	\put(250,0){\color{blue}{\circle*{5}}}
	\put(260,30){\color{blue}{\circle*{5}}}
        \put(280,10){\color{blue}{\circle*{5}}}
	
	\multiput(300,0)(0,10){4}{\line(1,0){140}}
	\multiput(300,0)(10,0){15}{\line(0,1){30}}
	\put(300,10){\color{blue}{\circle*{5}}}
	\put(330,30){\color{blue}{\circle*{5}}}
	\put(340,0){\color{blue}{\circle*{5}}}
	\put(370,20){\color{blue}{\circle*{5}}}
	\put(400,0){\color{blue}{\circle*{5}}}
	\put(410,30){\color{blue}{\circle*{5}}}
        \put(440,10){\color{blue}{\circle*{5}}}
\end{picture}
\caption{$4\times 4$ to $4\times15$ Domination Pattern for (3,1) domination} \label{fig:3,1-4xn}
\end{figure}

The construction of these sets follows a similar process to the construction of the (2,2) domination sets above.
Progressing from left to right, we will dominate the graph using the pattern shown in Figure \ref{1001100}.

\begin{figure}[H]
\centering
\begin{picture}(80,30)(40,0)
	\multiput(10,0)(0,10){4}{\line(1,0){80}}
	\multiput(20,0)(10,0){7}{\line(0,1){30}}
	\put(20,10){\color{blue}{\circle*{5}}}
	\put(50,30){\color{blue}{\circle*{5}}}
	\put(60,0){\color{blue}{\circle*{5}}}

	\multiput(110,0)(0,10){4}{\line(1,0){80}}
	\multiput(120,0)(10,0){7}{\line(0,1){30}}
	\put(120,20){\color{blue}{\circle*{5}}}
	\put(150,0){\color{blue}{\circle*{5}}}
	\put(160,30){\color{blue}{\circle*{5}}}
 \end{picture}
\caption{The 1-0-0-1-1-0-0 Pattern for $4\times n$ domination.} \label{1001100}
\end{figure}

The pattern can be placed in the grid with one of the two orientations shown.
In order to dominate the grid, we alternate its placement between the two orientations.  
This process is repeated until there are seven or less columns remaining to be dominated.
Then we use one of the seven patterns presented in Figure \ref{3,1-4xn finish} to fill in the remaining columns. Note that these patterns may need to be flipped vertically to fit with the last installment of the 1-0-0-1-1-0-0 pattern.

\begin{figure}[H]
\centering
\begin{picture}(320,30)(40,0)
	\multiput(10,0)(0,10){4}{\line(1,0){10}}
	\multiput(20,0)(10,0){1}{\line(0,1){30}}
	\put(20,20){\color{blue}{\circle*{5}}}

	\multiput(30,0)(0,10){4}{\line(1,0){20}}
	\multiput(40,0)(10,0){2}{\line(0,1){30}}
	\put(40,20){\color{blue}{\circle*{5}}}
	\put(50,10){\color{blue}{\circle*{5}}}

	\multiput(60,0)(0,10){4}{\line(1,0){30}}
	\multiput(70,0)(10,0){3}{\line(0,1){30}}
	\put(70,20){\color{blue}{\circle*{5}}}
	\put(90,10){\color{blue}{\circle*{5}}}

	\multiput(100,0)(0,10){4}{\line(1,0){40}}
	\multiput(110,0)(10,0){4}{\line(0,1){30}}
	\put(110,20){\color{blue}{\circle*{5}}}
	\put(130,0){\color{blue}{\circle*{5}}}
	\put(140,30){\color{blue}{\circle*{5}}}

	\multiput(150,0)(0,10){4}{\line(1,0){50}}
	\multiput(160,0)(10,0){5}{\line(0,1){30}}
	\put(160,20){\color{blue}{\circle*{5}}}
	\put(190,0){\color{blue}{\circle*{5}}}
	\put(200,30){\color{blue}{\circle*{5}}}

	\multiput(210,0)(0,10){4}{\line(1,0){60}}
	\multiput(220,0)(10,0){6}{\line(0,1){30}}
	\put(220,20){\color{blue}{\circle*{5}}}
	\put(250,0){\color{blue}{\circle*{5}}}
	\put(260,30){\color{blue}{\circle*{5}}}
	\put(270,10){\color{blue}{\circle*{5}}}

	\multiput(280,0)(0,10){4}{\line(1,0){70}}
	\multiput(290,0)(10,0){7}{\line(0,1){30}}
	\put(290,20){\color{blue}{\circle*{5}}}
	\put(320,0){\color{blue}{\circle*{5}}}
	\put(330,30){\color{blue}{\circle*{5}}}
	\put(350,10){\color{blue}{\circle*{5}}}
 \end{picture}
\caption{The ending patterns for 4 $\times$ n domination.} \label{3,1-4xn finish}
\end{figure}
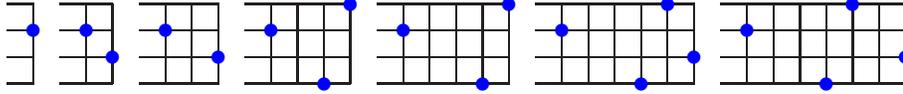

\begin{theorem}
Let $n \geq 4$.  The $(3,1)$  broadcast domination number of $G_{4,n}$ is 
\[
 \gamma_{3,1}(G_{4, n}) = \left\lfloor \frac{n+1}{7} \right\rfloor + \left\lfloor \frac{n+3}{7} \right\rfloor + \left\lfloor \frac{n+5}{7} \right\rfloor +1 .
\]
\end{theorem}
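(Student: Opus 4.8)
The plan is to mirror the two-part strategy used for the earlier $(2,2)$ and $(3,1)$ results: first exhibit the explicit construction $D_n$ and compute $|D_n|$, then invoke the Minimal Pattern Search algorithm to rule out every more efficient column pattern. A useful opening reduction is that, because $r=1$, a vertex is dominated exactly when its reception strength is positive, i.e.\ exactly when it lies within graph distance $2$ of some broadcasting vertex. Thus $(3,1)$ broadcast domination of $G_{4,n}$ is precisely distance-$2$ domination, and throughout I may replace ``reception strength $\geq 1$'' by ``within distance $2$ of $D_n$,'' which makes the covering analysis purely geometric.

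For the upper bound, I would first verify that the sets $D_n$ of Figures~\ref{fig:3,1-4xn}, \ref{1001100}, and~\ref{3,1-4xn finish} are genuine distance-$2$ dominating sets: the periodic block 1-0-0-1-1-0-0 (with alternating vertical orientation) covers its seven columns, the two orientations match across block boundaries, and each of the seven terminal patterns caps off the remaining columns. Each period contributes exactly three vertices over seven columns, so the construction satisfies $|D_{n+7}| = |D_n| + 3$. I would then check the seven base cases $4 \le n \le 10$ directly from the figures and observe that the claimed formula obeys the same recursion, since
\[ \left\lfloor \frac{(n+7)+a}{7} \right\rfloor = \left\lfloor \frac{n+a}{7} \right\rfloor + 1 \]
for each $a \in \{1,3,5\}$, so each of the three floor terms increases by one as $n \mapsto n+7$ while the additive constant is unchanged. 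A routine strong induction then gives $|D_n|$ equal to the stated expression; the three floors simply count the three residue classes modulo $7$ in which the construction places vertices, and the $+1$ records the boundary adjustment forced by the terminal patterns.

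The substance of the theorem is the lower bound, that no dominating set is smaller than $|D_n|$. I want to flag at the outset that a volume bound is too weak: in the height-$4$ strip a single broadcasting vertex covers at most $12$ vertices (fewer near the top and bottom rows), so counting gives only $\gamma_{3,1}(G_{4,n}) \ge 4n/12 = n/3$, well below the true density $3/7$. The correct argument is therefore combinatorial and proceeds column by column: writing any candidate dominating set as a pattern $c_1$-$c_2$-$\cdots$-$c_n$, I would use the MPS algorithm to enumerate the forbidden subpatterns --- the local configurations of a few consecutive columns that either already fail Property~1 of a dominating subpattern or cannot be extended without a compensating heavy column --- and argue that these obstructions force at least three selected vertices in every window of seven consecutive columns. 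Concretely, I expect to show that any pattern using fewer than three vertices per seven columns must contain one of the MPS-excluded subpatterns (the distance-$2$ analogue of the forbidden 1-0-0-0-1 from the $3\times n$ case), so the block 1-0-0-1-1-0-0 is optimal.

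The main obstacle will be making the density bound rigorous and, in particular, pinning down the exact additive constant: the MPS algorithm verifies only finitely many forbidden subpatterns, and translating ``every seven-column window needs three vertices'' into the sharp global count $\left\lfloor \frac{n+1}{7} \right\rfloor + \left\lfloor \frac{n+3}{7} \right\rfloor + \left\lfloor \frac{n+5}{7} \right\rfloor + 1$ requires careful bookkeeping at the two ends of the grid, where the truncated balls behave differently and a naive windowing argument would miscount. I would handle this by phrasing the lower bound as an amortized charge --- assigning each selected vertex responsibility for the columns it helps cover --- and checking that the charging is tight exactly on $D_n$, so that the three residue-class counts emerge with the correct boundary correction.
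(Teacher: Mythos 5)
Your proposal follows essentially the same route as the paper's own proof: the paper likewise takes the periodic construction $D_n$ with its terminal cap patterns, delegates the lower bound to the MPS computation (certifying that no six-column subpattern lighter than 1-0-1-1-0-0 extends to a dominating pattern, plus a by-hand check that the seven ending patterns are optimal), and then obtains the closed formula by induction from the recursion $|D_n| = |D_{n-1}|+1$ when $n \equiv 2,4,6 \bmod 7$ and $|D_n| = |D_{n-1}|$ otherwise. Your only departures are cosmetic refinements of the same argument: you induct in steps of seven (using that each floor term increases by one under $n \mapsto n+7$) rather than via the paper's one-step recursion, and your amortized-charging caveat about converting the local window bound into the sharp global constant makes explicit a bookkeeping step that the paper compresses into its MPS verification and end-pattern check.
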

\begin{proof}
Let $D_n$ denote the dominating set constructed above for $G_{4, n}$. 
In particular, the MPS program verifies that no six-column subpattern with fewer vertices than the subpattern 1-0-1-1-0-0  can be extended to dominate $G_{4,n}$. 
All that remains to be checked is that the right end of the graph is dominated in a minimal fashion. One can verify by hand or by computation
 that the patterns shown in Figure \ref{3,1-4xn finish} are the best ways to finish the domination set.

For $n \geq 4$ the cardinality of the set $D_n$ satisfies:  
\[ |D_n| = \begin{cases} |D_{n-1}|+1 & \text{ when } n \equiv 2,4, 6\mod 7   \\ |D_{n-1}| & \text{ otherwise } \end{cases}. \]
A simple induction argument shows that $|D_n| = \left\lfloor \frac{n+1}{7} \right\rfloor + \left\lfloor \frac{n+3}{7} \right\rfloor + \left\lfloor \frac{n+5}{7} \right\rfloor +1 $ for all $n \geq 4$.  
\end{proof}

\subsection{(3,2) broadcast domination numbers of 3 by n grids}

In this section, we will construct a dominating set for $(3,2)$  broadcast domination of a $3 \times n$ grid. Then we will show that this construction finds a minimal dominating set.

\begin{figure}[H]
\centering
\begin{picture}(405,20)(0,0)
	\linethickness{.1 mm}
	\multiput(10,0)(0,10){3}{\line(1,0){20}}
	\multiput(10,0)(10,0){3}{\line(0,1){20}}
	\put(10,0){\color{blue}{\circle*{5}}}
	\put(30,20){\color{blue}{\circle*{5}}}

	\multiput(50,0)(0,10){3}{\line(1,0){30}}
	\multiput(50,0)(10,0){4}{\line(0,1){20}}
	
	\put(50,0){\color{blue}{\circle*{5}}}
	\put(70,20){\color{blue}{\circle*{5}}}
	\put(80,0){\color{blue}{\circle*{5}}}
	\multiput(100,0)(0,10){3}{\line(1,0){40}}
	\multiput(100,0)(10,0){5}{\line(0,1){20}}
	\put(100,0){\color{blue}{\circle*{5}}}
	\put(120,20){\color{blue}{\circle*{5}}}
	\put(140,0){\color{blue}{\circle*{5}}}
	\multiput(160,0)(0,10){3}{\line(1,0){50}}
	\multiput(160,0)(10,0){6}{\line(0,1){20}}
    \put(160,0){\color{blue}{\circle*{5}}}
	\put(180,20){\color{blue}{\circle*{5}}}
	\put(200,00){\color{blue}{\circle*{5}}}
	\put(210,20){\color{blue}{\circle*{5}}}
	\multiput(230,0)(0,10){3}{\line(1,0){60}}
	\multiput(230,0)(10,0){7}{\line(0,1){20}}
	\put(230,0){\color{blue}{\circle*{5}}}
	\put(250,20){\color{blue}{\circle*{5}}}
	\put(270,0){\color{blue}{\circle*{5}}}
	\put(290,20){\color{blue}{\circle*{5}}}
	\multiput(310,0)(0,10){3}{\line(1,0){70}}
	\multiput(310,0)(10,0){8}{\line(0,1){20}}
	\put(310,0){\color{blue}{\circle*{5}}}
	\put(330,20){\color{blue}{\circle*{5}}}
	\put(350,0){\color{blue}{\circle*{5}}}
	\put(370,20){\color{blue}{\circle*{5}}}
	\put(380,0){\color{blue}{\circle*{5}}}
	
\end{picture}
\end{figure}

The construction of the above dominating sets follows a similar methodology to the previous cases. 
We start with choosing the bottom vertex in the first column. Then we choose a vertex in every other column.  Finally, we always choose a vertex in the last column.

\begin{theorem} Let $n \geq 3$.  
The $(3,2)$  broadcast domination number of $G_{3,n}$ is 
\[
 \gamma_{3,2} (G_{3,n}) = \left\lceil \frac{n+1}{2} \right\rceil .
\]
\end{theorem}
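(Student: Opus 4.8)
The plan is to follow the two-step template used in the preceding theorems: first verify that the displayed construction is a $(3,2)$ broadcast dominating set and count its vertices by induction, then use the MPS algorithm together with a short amortized count to certify that no smaller set dominates $G_{3,n}$. Write $D_n$ for the construction, which places one vertex in each odd-indexed column, alternating the bottom and top rows, plus one extra vertex in column $n$ when $n$ is even.

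To see that $D_n$ dominates, I would argue column by column. Every occupied column has reception at least $3$ at its chosen vertex and at least $2$ at the vertices directly above and below it, so those columns satisfy Property~1 immediately. Each empty column $c$ lies between two occupied columns whose vertices sit in opposite rows, say $(1,c-1)$ and $(3,c+1)$; a direct computation gives reception exactly $2$ at each of $(1,c),(2,c),(3,c)$, since the bottom vertex feeds $(1,c)$ at strength $2$ and $(2,c)$ at strength $1$, and the top vertex feeds $(3,c)$ at strength $2$ and $(2,c)$ at strength $1$, with no farther column contributing. The two end columns are dominated by the same alternation. For the cardinality, the recursion built into the construction gives $|D_n|=|D_{n-1}|$ when $n$ is odd and $|D_n|=|D_{n-1}|+1$ when $n$ is even; since $\lceil (n+1)/2\rceil$ obeys the identical recursion and agrees at the base cases $n=3,4,5$, a routine induction yields $|D_n|=\lceil (n+1)/2\rceil$.

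For optimality I would let $S$ be an arbitrary $(3,2)$ dominating set and write $s_c$ for the number of its vertices in column $c$, aiming to show $\sum_c s_c\ge\lceil (n+1)/2\rceil$. The engine is a pair of MPS-certified facts about forbidden subpatterns. First, three consecutive empty columns cannot occur: the off-diagonal vertices of the middle empty column can only be reached, and then only at strength $1$, by a row-aligned broadcaster two columns away, which forces three vertices in each flanking column, a configuration MPS rejects as non-minimal. Second, $1$-$0$-$0$-$1$ is not a dominating subpattern: with a single vertex in each outer column, the four corner vertices of the two empty columns cannot simultaneously reach reception $2$, which MPS confirms over all $3\times 3$ placements. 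These facts imply that any two consecutive empty columns must be flanked by columns each carrying at least two vertices, so a block of two empty columns never lowers the running density below $1/2$. Combined with the boundary observation that columns $1,2$ cannot both be empty (else $(2,1)$ receives at most $1$), and its mirror at the right end, an amortized charge of two columns per vertex then delivers the bound, with the boundary supplying the final $+1$ beyond the bulk density of one vertex per two columns.

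The main obstacle is this lower bound. The total-reception (linear-programming) estimate is genuinely loose here: a middle-row broadcaster distributes total reception $17$, far above the requirement of $2$ per vertex, so the fractional relaxation only yields roughly $6n/17$, well below $n/2$. Hence the density bound of $1/2$ is inherently combinatorial, and the configuration $2$-$0$-$0$-$2$ shows it is tight but not strict, so the accounting must be handled with care rather than by a crude inequality. The delicate step is converting the local, MPS-verified constraints into the exact global count and extracting the boundary $+1$; I would organize this as an induction that removes two columns at a time, noting that naively deleting the last two columns can leave columns $n-2$ and $n-3$ under-dominated. Controlling that leakage is exactly where the forbidden-subpattern facts must be invoked, and I expect the cleanest route is to strengthen the inductive hypothesis to track the reception already guaranteed at the two rightmost columns, so that the bookkeeping balances in every case.
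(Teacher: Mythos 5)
Your upper-bound half is correct and matches the paper: the same set $D_n$ (one vertex in each odd column, alternating rows $1$ and $3$, plus a forced vertex in column $n$ when $n$ is even), the same column-by-column reception check, and the same recursion $|D_n|=|D_{n-1}|+1$ for $n$ even, $|D_n|=|D_{n-1}|$ for $n$ odd, with induction against $\lceil (n+1)/2\rceil$. Your second MPS fact ($1$-$0$-$0$-$1$ is not a dominating subpattern) is exactly the engine the paper uses; the paper's lower bound consists essentially of that single assertion plus the conclusion, and is itself terse at the points discussed below (its blanket sentence that no dominating set can contain two adjacent empty columns is stated too strongly, since e.g. the pattern $2$-$0$-$0$-$2$ with vertices in rows $1$ and $3$ dominates the two empty columns locally).

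The genuine gap is in converting your local facts into the exact count, and it is not just bookkeeping: the constraint list you give is provably too weak to yield the $+1$ when $n$ is even. Take $n$ even and occupy every odd column with a single vertex (alternating rows, i.e. $D_{n-1}$ viewed inside $G_{3,n}$ with column $n$ empty). This set has no two adjacent empty columns at all, satisfies both of your boundary conditions (columns $1$ and $n-1$ are occupied), and meets your density-$1/2$ amortization with equality, yet it has only $n/2=\lceil (n+1)/2\rceil -1$ vertices. So no charging scheme based on facts (i), (ii) and the two end conditions can ever produce the final $+1$; what is missing is the finer end analysis showing this configuration fails to dominate: if column $n$ is empty and column $n-1$ holds a singleton in row $1$ or $3$, then the opposite corner of column $n$ has reception at most $1$ outright, and if the singleton is in row $2$, then $(1,n)$ and $(3,n)$ each need a vertex in rows $1$ and $3$ of column $n-2$, which costs at least as much as the extra vertex — and this saving must then be propagated through the induction you only sketch. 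Separately, your intermediate claim that a $0$-$0$ block forces at least two vertices in \emph{each} flanking column is false as stated (the local pattern $1$-$0$-$0$-$2$, with a middle-row singleton on one side and rows $1,3$ on the other, dominates both empty columns), though this particular slip does not damage the density count. You correctly identified the even-$n$ endpoint as the delicate step, but the proposal stops short of closing it, so the lower bound is incomplete.
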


\begin{proof}
We now prove that $D_n$ is a minimal dominating set for $G_{4,n}$.
The dominating set $D_n$ uses one vertex from every other column.  It is easy to verify (using the MPS algorithm or by hand) that no set of vertices with two adjacent empty columns can dominate $G_{3,n}$.  Hence
the pattern -1-0-0-1- is never a dominating subpattern of $G_{3,n}$, and the dominating set $D_n$ is minimal.

In constructing $D_n$ we choose a vertex in every other column, and we add a vertex in the last column if there is not already one there.  One can easily verify by inspection that 
$|D_n| = \left\lceil \frac{n+1}{2} \right\rceil$ for $n \leq 5$, and a  simple induction argument shows that 
$|D_n| = \left\lceil \frac{n+1}{2} \right\rceil$ for all $n >5$. 
\end{proof}

\subsection{(3,2) broadcast domination numbers of 4 by n grids}
In this section we construct a $(3,2)$ broadcast domination set for arbitrary $4 \times n$ grids, and then we find a closed formula for the $(3,2)$ broadcast domination number for all $4 \times n$ grids, where $n \geq 4$.
\begin{figure}[H]
\centering
\begin{picture}(450,30)(0,0)
	\linethickness{.1 mm}
	\multiput(10,0)(0,10){4}{\line(1,0){30}}
	\multiput(10,0)(10,0){4}{\line(0,1){30}}
	\put(10,0){\color{blue}{\circle*{5}}}
 	\put(20,30){\color{blue}{\circle*{5}}}
 	\put(40,10){\color{blue}{\circle*{5}}}
	\multiput(60,0)(0,10){4}{\line(1,0){40}}
	\multiput(60,0)(10,0){5}{\line(0,1){30}}
 	\put(60,00){\color{blue}{\circle*{5}}}
    \put(70,30){\color{blue}{\circle*{5}}}
    \put(90,10){\color{blue}{\circle*{5}}}
    \put(100,20){\color{blue}{\circle*{5}}}
	\multiput(120,0)(0,10){4}{\line(1,0){50}}
 	\multiput(120,0)(10,0){6}{\line(0,1){30}}
 	\put(120,0){\color{blue}{\circle*{5}}}
 	\put(130,30){\color{blue}{\circle*{5}}}
	\put(150,10){\color{blue}{\circle*{5}}}
 	\put(160,30){\color{blue}{\circle*{5}}}
 	\put(170,0){\color{blue}{\circle*{5}}}
	\multiput(190,0)(0,10){4}{\line(1,0){60}}
	\multiput(190,0)(10,0){7}{\line(0,1){30}}
	\put(190,0){\color{blue}{\circle*{5}}}
	\put(200,30){\color{blue}{\circle*{5}}}
	\put(220,10){\color{blue}{\circle*{5}}}
	\put(240,30){\color{blue}{\circle*{5}}}
	\put(250,0){\color{blue}{\circle*{5}}}
	\multiput(270,0)(0,10){4}{\line(1,0){70}}
	\multiput(270,0)(10,0){8}{\line(0,1){30}}
	\put(270,0){\color{blue}{\circle*{5}}}
	\put(280,30){\color{blue}{\circle*{5}}}
	\put(300,10){\color{blue}{\circle*{5}}}
	\put(320,30){\color{blue}{\circle*{5}}}
	\put(330,0){\color{blue}{\circle*{5}}}
	\put(340,20){\color{blue}{\circle*{5}}}
	\multiput(360,0)(0,10){4}{\line(1,0){80}}
	\multiput(360,0)(10,0){9}{\line(0,1){30}}
	\put(360,0){\color{blue}{\circle*{5}}}
	\put(370,30){\color{blue}{\circle*{5}}}
	\put(390,10){\color{blue}{\circle*{5}}}
	\put(410,30){\color{blue}{\circle*{5}}}
	\put(420,0){\color{blue}{\circle*{5}}}
	\put(440,20){\color{blue}{\circle*{5}}}
\end{picture}
\end{figure}

\begin{figure}[H]
\centering
\begin{picture}(380,30)(0,0)
	\linethickness{.1 mm}
	\multiput(10,0)(0,10){4}{\line(1,0){90}}
	\multiput(10,0)(10,0){10}{\line(0,1){30}}
	\put(10,0){\color{blue}{\circle*{5}}}
	\put(20,30){\color{blue}{\circle*{5}}}
	\put(40,10){\color{blue}{\circle*{5}}}
	\put(60,30){\color{blue}{\circle*{5}}}
	\put(70,0){\color{blue}{\circle*{5}}}
	\put(90,20){\color{blue}{\circle*{5}}}
	\put(100,10){\color{blue}{\circle*{5}}}
	\multiput(120,0)(0,10){4}{\line(1,0){100}}
	\multiput(120,0)(10,0){11}{\line(0,1){30}}
	\put(120,0){\color{blue}{\circle*{5}}}
	\put(130,30){\color{blue}{\circle*{5}}}
	\put(150,10){\color{blue}{\circle*{5}}}
	\put(170,30){\color{blue}{\circle*{5}}}
	\put(180,0){\color{blue}{\circle*{5}}}
    \put(200,20){\color{blue}{\circle*{5}}}
    \put(210,0){\color{blue}{\circle*{5}}}
    \put(220,30){\color{blue}{\circle*{5}}}
	\multiput(240,0)(0,10){4}{\line(1,0){110}}
	\multiput(240,0)(10,0){12}{\line(0,1){30}}
	\put(240,0){\color{blue}{\circle*{5}}}
	\put(250,30){\color{blue}{\circle*{5}}}
	\put(270,10){\color{blue}{\circle*{5}}}
	\put(290,30){\color{blue}{\circle*{5}}}
	\put(300,0){\color{blue}{\circle*{5}}}
	\put(320,20){\color{blue}{\circle*{5}}}
	\put(340,0){\color{blue}{\circle*{5}}}
	\put(350,30){\color{blue}{\circle*{5}}}
	
\end{picture}
\end{figure}

\begin{figure}[H]
\centering
\begin{picture}(380,30)(40,0)
	\multiput(10,0)(0,10){4}{\line(1,0){120}}
	\multiput(10,0)(10,0){13}{\line(0,1){30}}
	\put(10,0){\color{blue}{\circle*{5}}}
	\put(20,30){\color{blue}{\circle*{5}}}
	\put(40,10){\color{blue}{\circle*{5}}}
	\put(60,30){\color{blue}{\circle*{5}}}
	\put(70,0){\color{blue}{\circle*{5}}}
	\put(90,20){\color{blue}{\circle*{5}}}
    \put(110,0){\color{blue}{\circle*{5}}}
    \put(120,30){\color{blue}{\circle*{5}}}
    \put(130,10){\color{blue}{\circle*{5}}}

	\multiput(150,0)(0,10){4}{\line(1,0){130}}
	\multiput(150,0)(10,0){14}{\line(0,1){30}}
	\put(150,0){\color{blue}{\circle*{5}}}
	\put(160,30){\color{blue}{\circle*{5}}}
	\put(180,10){\color{blue}{\circle*{5}}}
	\put(200,30){\color{blue}{\circle*{5}}}
	\put(210,0){\color{blue}{\circle*{5}}}
	\put(230,20){\color{blue}{\circle*{5}}}
    \put(250,0){\color{blue}{\circle*{5}}}
    \put(260,30){\color{blue}{\circle*{5}}}
    \put(280,10){\color{blue}{\circle*{5}}}
	
	\multiput(300,0)(0,10){4}{\line(1,0){140}}
	\multiput(300,0)(10,0){15}{\line(0,1){30}}
	\put(300,0){\color{blue}{\circle*{5}}}
	\put(310,30){\color{blue}{\circle*{5}}}
	\put(330,10){\color{blue}{\circle*{5}}}
	\put(350,30){\color{blue}{\circle*{5}}}
	\put(360,0){\color{blue}{\circle*{5}}}
	\put(380,20){\color{blue}{\circle*{5}}}
    \put(400,0){\color{blue}{\circle*{5}}}
    \put(410,30){\color{blue}{\circle*{5}}}
    \put(430,10){\color{blue}{\circle*{5}}}
    \put(440,20){\color{blue}{\circle*{5}}}

\end{picture}
\caption{4x4 to 4x15 Domination Pattern for (3,2) domination} \label{fig:3,2-4xn}
\end{figure}
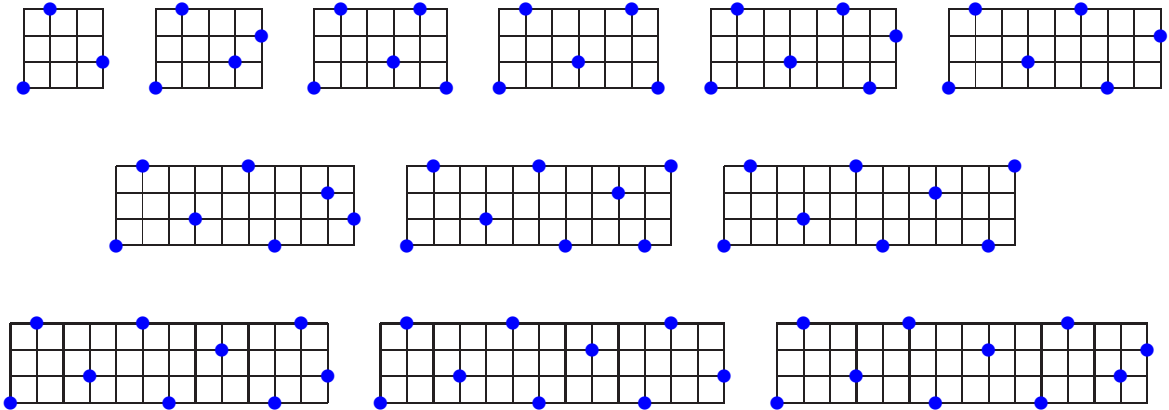

We start the construction by using the dominating pattern 1-1-0-1-0 shown below. 
We alternate between the two configurations shown below in Figure \ref{32pattern}.
We repeat this pattern until there are between 2 and 6 columns remaining.

\begin{figure}[H]
\centering
\begin{picture}(100,30)(50,0)
	\multiput(10,0)(0,10){4}{\line(1,0){60}}
	\multiput(20,0)(10,0){5}{\line(0,1){30}}
	\put(20,0){\color{blue}{\circle*{5}}}
	\put(30,30){\color{blue}{\circle*{5}}}
	\put(50,10){\color{blue}{\circle*{5}}}

	\multiput(110,0)(0,10){4}{\line(1,0){60}}
	\multiput(120,0)(10,0){5}{\line(0,1){30}}
	\put(120,30){\color{blue}{\circle*{5}}}
	\put(130,0){\color{blue}{\circle*{5}}}
	\put(150,20){\color{blue}{\circle*{5}}}
\end{picture}
\caption{Repeated $(3,2)$  broadcast domination pattern for $4 \times n$ grid.} \label{32pattern}
\end{figure}
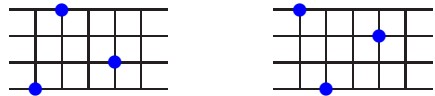

All that remains to be done, is to finish the dominating set off with one of the 5 patterns listed below.

\begin{figure}[H]
\centering
\begin{picture}(220,30)(40,0)
	\multiput(10,0)(0,10){4}{\line(1,0){20}}
	\multiput(20,0)(10,0){2}{\line(0,1){30}}
	\put(20,30){\color{blue}{\circle*{5}}}
	\put(30,0){\color{blue}{\circle*{5}}}

	\multiput(40,0)(0,10){4}{\line(1,0){30}}
	\multiput(50,0)(10,0){3}{\line(0,1){30}}
	\put(50,30){\color{blue}{\circle*{5}}}
	\put(60,0){\color{blue}{\circle*{5}}}
	\put(70,20){\color{blue}{\circle*{5}}}

	\multiput(80,0)(0,10){4}{\line(1,0){40}}
	\multiput(90,0)(10,0){4}{\line(0,1){30}}
	\put(90,30){\color{blue}{\circle*{5}}}
	\put(100,0){\color{blue}{\circle*{5}}}
	\put(120,20){\color{blue}{\circle*{5}}}

	\multiput(130,0)(0,10){4}{\line(1,0){50}}
	\multiput(140,0)(10,0){5}{\line(0,1){30}}
	\put(140,30){\color{blue}{\circle*{5}}}
	\put(150,0){\color{blue}{\circle*{5}}}
	\put(170,20){\color{blue}{\circle*{5}}}
	\put(180,10){\color{blue}{\circle*{5}}}

	\multiput(190,0)(0,10){4}{\line(1,0){60}}
	\multiput(200,0)(10,0){6}{\line(0,1){30}}
	\put(200,30){\color{blue}{\circle*{5}}}
	\put(210,0){\color{blue}{\circle*{5}}}
	\put(230,20){\color{blue}{\circle*{5}}}
	\put(240,0){\color{blue}{\circle*{5}}}
	\put(250,30){\color{blue}{\circle*{5}}}
	
 \end{picture}
\caption{The ending patterns for $(3,2)$ broadcast domination of $4 \times n$ grids.} \label{3,2-4xn finish}
\end{figure}
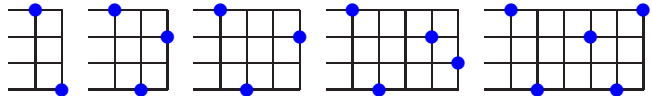

\begin{theorem}
Let $n \geq 4$.  The $(3,2)$ broadcast domination number of $G_{4,n}$ is 
\[
 \gamma_{3,2}(G_{4,n}) = \left\lceil \frac{n+4}{5}\right\rceil + \left\lceil \frac{n+2}{5}\right\rceil + \left\lceil \frac{n}{5}\right\rceil +1 
\]
\end{theorem}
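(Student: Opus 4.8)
The plan is to mirror the two-step template of the preceding $4\times n$ theorems: exhibit that the construction $D_n$ drawn in Figures~\ref{fig:3,2-4xn}, \ref{32pattern}, and \ref{3,2-4xn finish} has the asserted size, and then use the MPS algorithm to certify that no broadcasting set of smaller cardinality can $(3,2)$-dominate $G_{4,n}$. I would state these as the two halves of the proof up front, then dispatch the counting half quickly and spend the bulk of the argument on minimality.

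For the count, I would first record how $|D_n|$ grows as a single column is appended. Because the interior of $D_n$ repeats the length-$5$ block $1$-$1$-$0$-$1$-$0$ (three vertices per five columns) with an alternating orientation, and because the finishing patterns of Figure~\ref{3,2-4xn finish} are spliced on only over the last $2$ through $6$ columns, the construction adds one vertex on three out of every five consecutive values of $n$ and none on the other two. Concretely I would read off the recursion
\[ |D_n| = \begin{cases} |D_{n-1}|+1 & \text{on three of the residues modulo } 5, \\ |D_{n-1}| & \text{otherwise,} \end{cases} \]
directly from the construction, and then check that the three periodic summands in the closed form each jump by one on a distinct residue modulo $5$, so that their sum obeys the same recursion. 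After verifying the base cases $4\le n\le 8$ by inspection of Figure~\ref{fig:3,2-4xn}, a routine induction finishes the cardinality claim.

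The substantive work is minimality, handled exactly as in the $(2,2)$ and $(3,1)$ arguments. Since the interior block spends only three vertices per five columns, any strictly cheaper dominating set must contain a five-column window carrying at most two vertices, that is, a subpattern sparser than $1$-$1$-$0$-$1$-$0$. The plan is to enumerate every such candidate window with the MPS program and confirm that each one forces some interior vertex to have reception strength below $2$, violating Property~1 of a dominating subpattern. The key structural reason is that a strength-$3$ broadcast reaches only vertices within distance $2$, hence at most two columns away, so a vertex three or more columns from every chosen vertex receives strength $0$; thus a too-sparse pair of adjacent columns can never be repaired from outside. This rules out all denser interior behavior and pins the optimal interior block to $1$-$1$-$0$-$1$-$0$. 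I would then treat the two boundaries, using MPS to verify that a dominating set cannot begin or end more sparsely than the displayed end configurations and that the five leftover patterns of Figure~\ref{3,2-4xn finish} are the cheapest completions consistent with Property~2.

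I expect the principal obstacle to be bookkeeping at the ends rather than in the interior: one must confirm that each of the five finishing cases splices onto the alternating $1$-$1$-$0$-$1$-$0$ interior without sharing or omitting a column and without leaving a boundary vertex under strength $2$. Once the MPS output certifies every leftover case and the base cases anchor the induction, the minimum cardinality coincides with the closed form, yielding $\gamma_{3,2}(G_{4,n})$.
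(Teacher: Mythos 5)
Your proposal is correct and follows essentially the same route as the paper: the paper likewise has the MPS algorithm certify that every five-column pattern with fewer vertices than 1-1-0-1-0 fails to be a dominating subpattern and that the displayed patterns for the last 2 through 6 columns are the minimal completions, and it then establishes the closed form via the recursion $|D_n| = |D_{n-1}|+1$ for $n \equiv 0,1,3 \bmod 5$ and $|D_n| = |D_{n-1}|$ otherwise, matched against $F(n) = \left\lceil \frac{n+4}{5}\right\rceil + \left\lceil \frac{n+2}{5}\right\rceil + \left\lceil \frac{n}{5}\right\rceil +1$ by a simple induction. Your added pigeonhole justification for why a cheaper set must contain a sparse five-column window, and the explicit residue bookkeeping, are details the paper leaves implicit but not a different method.
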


\begin{proof}
We now show that the $D_n$ are minimal dominating sets.
Consider that the pattern used in the above dominating sets is 1-1-0-1-0 repeating until the graph ends. 
The MPS algorithm confirms that all five-column patterns containing fewer vertices than 1-1-0-1-0 are not dominating subpatterns. \footnote{The calculation is available at \\ {\tt https://cloud.sagemath.com/projects/26b983ae-a894-47c0-bd54-c73b071e555c/files/(3,2)BDPC.sagews }}  Similarly, the program also shows that the patterns we 
use to dominate the last 2, 3, 4, 5, or 6 columns are the minimal dominating patterns.   
Therefore, we conclude that the dominating sets presented above are the smallest (3,2) broadcast dominating sets for $G_{4,n}$.

We note that by its construction that $D_n$ satisfies
\[ |D_n| = \begin{cases} |D_{n-1}|+1 & \text{ when } n \equiv 0,1,3 \mod 5 \\  |D_{n-1}| & \text{otherwise}  \end{cases}  \]
Letting $F(n) =\left\lceil \frac{n+4}{5}\right\rceil + \left\lceil \frac{n+2}{5}\right\rceil + \left\lceil \frac{n}{5}\right\rceil +1,$ a simple induction argument shows that $F(n)$ follows the same pattern:
\[ F(n) = \begin{cases} F(n-1)+1 & \text{ when } n \equiv 0,1,3 \mod 5 \\  F(n-1) & \text{otherwise}  \end{cases}  .\] 

Since $D_n$ has the smallest cardinality of any $(3,2)$ dominating set for $G_{4,n}$, we conclude that $ \gamma_{3,2}(G_{4,n}) =  |D_n| =F(n) =
\left\lceil \frac{n+4}{5}\right\rceil + \left\lceil \frac{n+2}{5}\right\rceil + \left\lceil \frac{n}{5}\right\rceil +1$ for $n \geq 4$.
\end{proof}

\section{Upper Bounds on Broadcast Domination Numbers} \label{section:theorems}

In this section we construct efficient dominating sets for the $(2,2)$, $(3,1)$, $(3,2)$ and $(3,3)$ broadcast domination of $m \times n$ grids $G_{m,n}$. 
We conjecture these sets are in fact minimum dominating sets when $m$ and $n$ are sufficiently large. 
Then we prove formulas counting the cardinality of each dominating set, giving upper bounds for the broadcast domination numbers of $G_{m,n}$.  

To construct these broadcast dominating sets, we start by identifying a family of optimal dominating sets for $\ZZ \times \ZZ$ under $(2,2)$, $(3,1)$, $(3,2)$, and $(3,3)$ broadcast domination.
The intersection of these optimal sets with the appropriate neighborhood of $G_{m,n}$ will dominate $G_{m,n}$.
We then show how to condense the resulting dominating set to construct an efficient dominating set that is entirely contained in $G_{m,n}$.
We adapt several of the techniques used by Chang in finding upper bounds for the regular domination number of $m \times n$ grids \cite{Cha92}, which in our notation is the $(2,1)$ broadcast domination number.
However, the implementation of these techniques in more general $(\ww,\tw)$ broadcast domination is remarkably more nuanced than in regular domination.

\subsection{(2,2) broadcast domination}
In this subsection we describe (2,2) broadcast dominating sets of $G_{m,n}$ with the smallest cardinality. 
When $m$ and $n$ are sufficiently large, 
we conjecture that this set is an optimal dominating set of $G_{m,n}$ in the sense that it contains the minimum number of vertices of any dominating set
of $G_{m,n}$.  We conclude this section by counting the number of elements in this efficient dominating set, thus giving an upper bound on the broadcast domination number.

We start by describing a family of dominating sets for $\ZZ \times \ZZ$ under $(2,2)$ broadcast domination.
Define a map $\phi: \ZZ \times \ZZ \rightarrow \ZZ_{3}$ by $(x,y) \mapsto x+2y$. 
Let $P(i) = \phi^{-1}(i)$ denote the inverse image  for $i \in \ZZ_3$.  
The sets $P(0)$, $P(1)$, and $P(2)$ are shown in Figure \ref{P(0)}.

\begin{figure}[H]
\begin{picture}(410,110)(-40,0)
	\linethickness{.1 mm}
	\multiput(10,10)(0,10){9}{\line(1,0){100}}
	\multiput(20,0)(10,0){9}{\line(0,1){100}}
	\put(60,50){\color{blue}{\circle*{5}}}
	\put(70,60){\color{blue}{\circle*{5}}}
	\put(80,70){\color{blue}{\circle*{5}}}
	\put(90,80){\color{blue}{\circle*{5}}}
	\put(100,90){\color{blue}{\circle*{5}}}
	\put(50,40){\color{blue}{\circle*{5}}}  
	\put(40,30){\color{blue}{\circle*{5}}} 
	\put(30,20){\color{blue}{\circle*{5}}} 
	\put(20,10){\color{blue}{\circle*{5}}} 
	\put(30,50){\color{blue}{\circle*{5}}}
	\put(40,60){\color{blue}{\circle*{5}}}
	\put(50,70){\color{blue}{\circle*{5}}}
	\put(60,80){\color{blue}{\circle*{5}}}
	\put(70,90){\color{blue}{\circle*{5}}}
	\put(20,40){\color{blue}{\circle*{5}}}   
	\put(20,70){\color{blue}{\circle*{5}}}
	\put(30,80){\color{blue}{\circle*{5}}}
	\put(40,90){\color{blue}{\circle*{5}}}
	\put(90,50){\color{blue}{\circle*{5}}}
	\put(100,60){\color{blue}{\circle*{5}}}
	\put(80,40){\color{blue}{\circle*{5}}}  
	\put(70,30){\color{blue}{\circle*{5}}} 
	\put(60,20){\color{blue}{\circle*{5}}} 
	\put(50,10){\color{blue}{\circle*{5}}} 
	\put(100,30){\color{blue}{\circle*{5}}} 
	\put(90,20){\color{blue}{\circle*{5}}} 
	\put(80,10){\color{blue}{\circle*{5}}} 
  
	\linethickness{0.4mm}
	\put(60,0){\line(0,1){100}}
	\put(10,50){\line(1,0){100}}

	\linethickness{.1 mm}
	\multiput(130,10)(0,10){9}{\line(1,0){100}}
	\multiput(140,0)(10,0){9}{\line(0,1){100}}
	\put(190,50){\color{blue}{\circle*{5}}}
	\put(200,60){\color{blue}{\circle*{5}}}
	\put(210,70){\color{blue}{\circle*{5}}}
	\put(220,80){\color{blue}{\circle*{5}}}
	\put(140,90){\color{blue}{\circle*{5}}}
	\put(180,40){\color{blue}{\circle*{5}}}  
	\put(170,30){\color{blue}{\circle*{5}}} 
	\put(160,20){\color{blue}{\circle*{5}}} 
	\put(150,10){\color{blue}{\circle*{5}}} 
	\put(160,50){\color{blue}{\circle*{5}}}
	\put(170,60){\color{blue}{\circle*{5}}}
	\put(180,70){\color{blue}{\circle*{5}}}
	\put(190,80){\color{blue}{\circle*{5}}}
	\put(200,90){\color{blue}{\circle*{5}}}
	\put(150,40){\color{blue}{\circle*{5}}}   
	\put(150,70){\color{blue}{\circle*{5}}}
	\put(160,80){\color{blue}{\circle*{5}}}
	\put(170,90){\color{blue}{\circle*{5}}}
	\put(220,50){\color{blue}{\circle*{5}}}
	\put(140,60){\color{blue}{\circle*{5}}}
	\put(210,40){\color{blue}{\circle*{5}}}  
	\put(200,30){\color{blue}{\circle*{5}}} 
	\put(190,20){\color{blue}{\circle*{5}}} 
	\put(180,10){\color{blue}{\circle*{5}}} 
	\put(140,30){\color{blue}{\circle*{5}}} 
	\put(220,20){\color{blue}{\circle*{5}}} 
	\put(210,10){\color{blue}{\circle*{5}}} 
  
	\linethickness{0.4mm}
	\put(180,0){\line(0,1){100}}
	\put(130,50){\line(1,0){100}}

	\linethickness{.1 mm}
	\multiput(250,10)(0,10){9}{\line(1,0){100}}
	\multiput(260,0)(10,0){9}{\line(0,1){100}}
	\put(320,50){\color{blue}{\circle*{5}}}
	\put(330,60){\color{blue}{\circle*{5}}}
	\put(340,70){\color{blue}{\circle*{5}}}
	\put(260,80){\color{blue}{\circle*{5}}}
	\put(270,90){\color{blue}{\circle*{5}}}
	\put(310,40){\color{blue}{\circle*{5}}}  
	\put(300,30){\color{blue}{\circle*{5}}} 
	\put(290,20){\color{blue}{\circle*{5}}} 
	\put(280,10){\color{blue}{\circle*{5}}} 
	\put(290,50){\color{blue}{\circle*{5}}}
	\put(300,60){\color{blue}{\circle*{5}}}
	\put(310,70){\color{blue}{\circle*{5}}}
	\put(320,80){\color{blue}{\circle*{5}}}
	\put(330,90){\color{blue}{\circle*{5}}}
	\put(280,40){\color{blue}{\circle*{5}}}   
	\put(280,70){\color{blue}{\circle*{5}}}
	\put(290,80){\color{blue}{\circle*{5}}}
	\put(300,90){\color{blue}{\circle*{5}}}
	\put(260,50){\color{blue}{\circle*{5}}}
	\put(270,60){\color{blue}{\circle*{5}}}
	\put(340,40){\color{blue}{\circle*{5}}}  
	\put(330,30){\color{blue}{\circle*{5}}} 
	\put(320,20){\color{blue}{\circle*{5}}} 
	\put(310,10){\color{blue}{\circle*{5}}} 
	\put(270,30){\color{blue}{\circle*{5}}} 
	\put(260,20){\color{blue}{\circle*{5}}} 
	\put(340,10){\color{blue}{\circle*{5}}} 
  
	\linethickness{0.4mm}
	\put(300,0){\line(0,1){100}}
	\put(250,50){\line(1,0){100}}
\end{picture}
\caption{ Three dominating sets $P(0)$, $P(1)$, and $P(2)$ } \label{P(0)}
\end{figure}
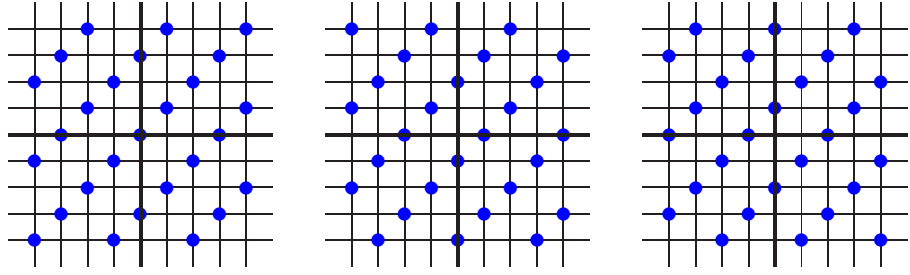

Since the elements of $P(i)$ appear on every third diagonal of $\ZZ \times \ZZ$, and every diagonal dominates itself and the two diagonals closest to it with a reception strength of $2$,
 it follows that each set $P(i)$ dominates $\ZZ \times \ZZ$ optimally. 

Embed $G_{m,n}$ into $\ZZ \times \ZZ$ as the following set: \[ G_{m,n} = \{ (a,b) \in \ZZ \times \ZZ  \mid 1 \leq a \leq n  \text{ and } 1 \leq b \leq m \}, \]
and let $Y_{m,n} \cong G_{m+2,n+2} $ denote the neighborhood of $G_{m,n}$: \[Y_{m,n} = \{ (a,b) \in \ZZ \times \ZZ \mid 0 \leq a \leq n+1 \text{ and }  0 \leq b \leq m+1 \}. \]
For each $i \in \ZZ_3$ the set $P(i) \cap Y_{m,n}$ completely dominates $G_{m,n}$ under $(2,2)$ broadcast domination.
Figure \ref{GYexample} shows $G_{7,8} \subset Y_{7,8}$ and the intersection $P(0) \cap Y_{7,8}$. One can easily verify by inspection that $P(0) \cap Y_{7,8}$ dominates $G_{7,8}$

\begin{figure}[H]
\begin{picture}(410,110)(-100,0)
	\linethickness{.1 mm}
	\multiput(10,10)(0,10){10}{\line(1,0){110}}
	\multiput(20,0)(10,0){10}{\line(0,1){110}}
    
	\linethickness{0.4mm}
	\put(20,0){\line(0,1){100}}
	\put(10,10){\line(1,0){100}}
        \put(30,20){\color{red}{\line(1,0){70}}}
        \put(100,20){\color{red}{\line(0,1){60}}}
        \put(30,20){\color{red}{\line(0,1){60}}}
        \put(30,80){\color{red}{\line(1,0){70}}}

        \put(20,10){\color{green}{\line(1,0){90}}}
        \put(110,10){\color{green}{\line(0,1){80}}}
        \put(20,10){\color{green}{\line(0,1){80}}}
        \put(20,90){\color{green}{\line(1,0){90}}}
	\linethickness{.1 mm}
	\multiput(130,10)(0,10){10}{\line(1,0){110}}
	\multiput(140,0)(10,0){10}{\line(0,1){110}}
    
	\put(180,50){\color{blue}{\circle*{5}}}
	\put(190,60){\color{blue}{\circle*{5}}}
 	\put(200,70){\color{blue}{\circle*{5}}}
 	\put(210,80){\color{blue}{\circle*{5}}}
 	\put(220,90){\color{blue}{\circle*{5}}}
	\put(170,40){\color{blue}{\circle*{5}}}  
	\put(160,30){\color{blue}{\circle*{5}}} 
	\put(150,20){\color{blue}{\circle*{5}}} 
	\put(140,10){\color{blue}{\circle*{5}}} 
	\put(150,50){\color{blue}{\circle*{5}}}
	\put(160,60){\color{blue}{\circle*{5}}}
 	\put(170,70){\color{blue}{\circle*{5}}}
 	\put(180,80){\color{blue}{\circle*{5}}}
 	\put(190,90){\color{blue}{\circle*{5}}}
	\put(140,40){\color{blue}{\circle*{5}}}   
 	\put(140,70){\color{blue}{\circle*{5}}}
 	\put(150,80){\color{blue}{\circle*{5}}}
 	\put(160,90){\color{blue}{\circle*{5}}}
 	\put(210,50){\color{blue}{\circle*{5}}}
 	\put(220,60){\color{blue}{\circle*{5}}}
	\put(200,40){\color{blue}{\circle*{5}}}  
	\put(190,30){\color{blue}{\circle*{5}}} 
	\put(180,20){\color{blue}{\circle*{5}}} 
	\put(170,10){\color{blue}{\circle*{5}}} 
 	\put(220,30){\color{blue}{\circle*{5}}} 
 	\put(210,20){\color{blue}{\circle*{5}}} 
 	\put(200,10){\color{blue}{\circle*{5}}} 
 	\put(230,10){\color{blue}{\circle*{5}}} 
        \put(230,40){\color{blue}{\circle*{5}}} 
        \put(230,70){\color{blue}{\circle*{5}}} 
  
	\linethickness{0.4mm}
	\put(140,0){\line(0,1){110}}
	\put(130,10){\line(1,0){110}}
        \put(150,20){\color{red}{\line(1,0){70}}}
        \put(220,20){\color{red}{\line(0,1){60}}}
        \put(150,20){\color{red}{\line(0,1){60}}}
        \put(150,80){\color{red}{\line(1,0){70}}}

        \put(140,10){\color{green}{\line(1,0){90}}}
        \put(230,10){\color{green}{\line(0,1){80}}}
        \put(140,10){\color{green}{\line(0,1){80}}}
        \put(140,90){\color{green}{\line(1,0){90}}}
\end{picture}
\caption{The sets $G_{7,8} \subset Y_{7,8}$ and the set $P(0) \cap Y_{7,8}$ } \label{GYexample}
\end{figure}
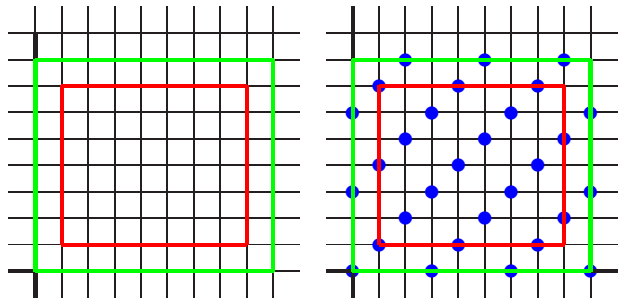

The following lemmas describe how to remove and rearrange some of the vertices in 
$P(i) \cap Y_{m,n}$ to obtain a smaller dominating set for $G_{m,n}$.

\begin{lemma}
There are three configurations of the NW and SE corners of $P(i) \cap Y_{m,n}$. 
In all three configurations we can create a smaller dominating set for that corner of $G_{m,n} \subset Y_{m,n}$ by moving some of the vertices of 
$P(i) \cap Y_{m,n} $ so that they lie inside $G_{m,n}$. In the process we can remove one vertex from our original dominating set $P(i) \cap Y_{m,n}$.
\end{lemma}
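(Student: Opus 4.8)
The plan is to exploit the slope-one diagonal structure of $P(i)$ together with a central symmetry, reducing everything to a bounded local check at a single corner. Recall that $P(i)$ is the union of the diagonals $\{x-y\equiv i \pmod 3\}$, and that the NW corner vertex of $Y_{m,n}$, namely $(0,m+1)$, lies on the diagonal $x-y=-(m+1)$, which meets $Y_{m,n}$ in that one vertex; symmetrically the SE corner $(n+1,0)$ lies on $x-y=n+1$. Thus near the NW and SE corners the diagonals of $P(i)$ merely clip the corner, in contrast to the SW and NE corners where a diagonal runs into the grid. First I would record that $(x,y)\mapsto(n+1-x,\,m+1-y)$ is a $180^{\circ}$ symmetry of $Y_{m,n}$ fixing $G_{m,n}$ setwise and sending $P(i)$ to $P(j)$ for some $j\in\ZZ_3$ (indeed $j\equiv (n-m)-i$). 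This lets me analyze only the NW corner and transport the conclusion to the SE corner for free.

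For the NW corner I would classify the picture by the residue of $-(m+1)-i \pmod 3$, equivalently by how far the extreme (most negative $x-y$) diagonal of $P(i)$ meeting $Y_{m,n}$ sits from the corner. This produces the three configurations: in (A) the extreme diagonal is the single corner vertex $(0,m+1)$; in (B) the extreme diagonal $x-y=-m$ meets $Y_{m,n}$ in the two border vertices $(0,m)$ and $(1,m+1)$; in (C) the extreme diagonal $x-y=-m+1$ meets $Y_{m,n}$ in $(0,m-1)$, the grid vertex $(1,m)$, and the border vertex $(2,m+1)$. In every case the vertices of $P(i)\cap Y_{m,n}$ that lie outside $G_{m,n}$ broadcast almost entirely into the complement of $G_{m,n}$, so they can be traded for fewer vertices placed inside $G_{m,n}$.

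Then I would exhibit the explicit local move and verify it in each case. In (A) the vertex $(0,m+1)$ has reception strength $0$ at every vertex of $G_{m,n}$, since the nearest grid vertex $(1,m)$ lies at distance $2$; hence it is simply deleted, and one checks that $(1,m)$ still receives strength $2$ from the diagonal $x-y=-m+2$ via $(1,m-1)$ and $(2,m)$. In (B) I replace $\{(0,m),(1,m+1)\}$ by the single grid vertex $(1,m)$; in (C) I keep $(1,m)$ and replace $\{(0,m-1),(2,m+1)\}$ by the single grid vertex $(2,m-1)$. In each case a short computation confirms that the grid vertices formerly served only by the discarded border vertices — $(1,m)$ in (B), and $(1,m-1)$ and $(2,m)$ in (C) — again receive strength $2$, while every other nearby grid vertex is already dominated by the unchanged interior diagonals. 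Each move lowers the vertex count by exactly one.

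The main obstacle, and the only delicate point, is confirming that each swap is genuinely local: I must check that the deleted border vertices contribute reception strength $0$ to all grid vertices other than the handful I explicitly repair, and that the inserted interior vertex neither depends on nor disturbs the coverage supplied along the adjacent edge of the grid (which later lemmas adjust). Because the reception strength of a vertex depends only on broadcasters within distance $t=2$, this reduces to inspecting a fixed-size window around the corner in each of the three configurations, after which the $180^{\circ}$ symmetry delivers the identical savings at the SE corner.
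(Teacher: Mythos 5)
Your proof is correct and takes essentially the same route as the paper: the paper's Figure~\ref{NW} exhibits exactly your three configurations (one per value of the residue you compute), performs the same kind of local replacement of $k$ circled border vertices by $k-1$ vertices inside the grid, and disposes of the SE corner by the $180^{\circ}$ rotation that you make explicit via $(x,y)\mapsto(n+1-x,\,m+1-y)$. Your replacement vertices differ harmlessly from the paper's in two of the three cases (e.g.\ $\{(1,m),(2,m-1)\}$ in place of the paper's $\{(1,m-1),(2,m)\}$ in the three-vertex configuration, and a bare deletion of $(0,m+1)$ where the paper's figure also pre-relocates two border vertices that the later nearest-neighbor step would move anyway), and your algebraic classification makes rigorous what the paper establishes by pictures.
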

\begin{proof}
The three configurations for the NW corner of $P(i) \cap Y_{m,n}$ where $i =0,1,2$ are portrayed in order in Figure \ref{NW}. 
In each instance we have highlighted how  $k$ vertices circled in red can
be replaced with $k-1$ vertices highlighted by a bullseye.  
The pictures for the SE corner are the same up to $180^\circ$ rotation.
\end{proof}

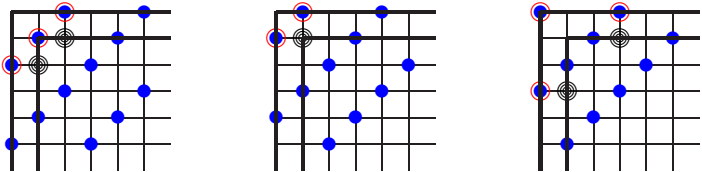
\begin{figure}[H]
\begin{picture}(300,70)(0,0)
	\linethickness{.1 mm}
	\multiput(10,10)(0,10){6}{\line(1,0){60}}
	\multiput(10,0)(10,0){6}{\line(0,1){60}}
	\put(10,40){\color{blue}{\circle*{5}}}
	\put(10,40){\color{red}{\circle{7}}}
	\put(20,50){\color{blue}{\circle*{5}}}
	\put(20,50){\color{red}{\circle{7}}}
	\put(30,60){\color{blue}{\circle*{5}}}
	\put(30,60){\color{red}{\circle{7}}}
	\put(20,40){\color{black}{\circle{5}}}
	\put(20,40){\color{black}{\circle{3}}}
	\put(20,40){\color{black}{\circle{7}}}
	\put(30,50){\color{black}{\circle{5}}}
	\put(30,50){\color{black}{\circle{3}}}
	\put(30,50){\color{black}{\circle{7}}}
	\put(10,10){\color{blue}{\circle*{5}}}
	\put(20,20){\color{blue}{\circle*{5}}}
	\put(30,30){\color{blue}{\circle*{5}}}
	\put(40,40){\color{blue}{\circle*{5}}}
	\put(50,50){\color{blue}{\circle*{5}}}
	\put(60,60){\color{blue}{\circle*{5}}}
	\put(40,10){\color{blue}{\circle*{5}}}
	\put(50,20){\color{blue}{\circle*{5}}}
	\put(60,30){\color{blue}{\circle*{5}}}
	
	\linethickness{0.4mm}
	\put(10,0){\line(0,1){60}}
	\put(10,60){\line(1,0){60}}
	\put(20,0){\line(0,1){50}}
	\put(20,50){\line(1,0){50}}
	\linethickness{.1 mm}
	\multiput(110,10)(0,10){6}{\line(1,0){60}}
	\multiput(110,0)(10,0){6}{\line(0,1){60}}
	\put(110,50){\color{blue}{\circle*{5}}}
	\put(110,50){\color{red}{\circle{7}}}
	\put(120,60){\color{blue}{\circle*{5}}}
	\put(120,60){\color{red}{\circle{7}}}
	\put(120,50){\color{black}{\circle{5}}}
	\put(120,50){\color{black}{\circle{3}}}
	\put(120,50){\color{black}{\circle{7}}}
	\put(110,20){\color{blue}{\circle*{5}}}
	\put(120,30){\color{blue}{\circle*{5}}}
	\put(130,40){\color{blue}{\circle*{5}}}
	\put(140,50){\color{blue}{\circle*{5}}}
	\put(150,60){\color{blue}{\circle*{5}}}
	\put(130,10){\color{blue}{\circle*{5}}}
	\put(140,20){\color{blue}{\circle*{5}}}
	\put(150,30){\color{blue}{\circle*{5}}}
	\put(160,40){\color{blue}{\circle*{5}}}
	
	\linethickness{0.4mm}
	\put(110,0){\line(0,1){60}}
	\put(110,60){\line(1,0){60}}
	\put(120,0){\line(0,1){50}}
	\put(120,50){\line(1,0){50}}
\linethickness{.1 mm}
\multiput(210,10)(0,10){6}{\line(1,0){60}}
\multiput(210,0)(10,0){6}{\line(0,1){60}}

 
 \put(210,60){\color{blue}{\circle*{5}}}
 \put(210,60){\color{red}{\circle{7}}}
%

 \put(210,30){\color{blue}{\circle*{5}}}
 \put(210,30){\color{red}{\circle{7}}}

  \put(220,30){\color{black}{\circle{5}}}
  \put(220,30){\color{black}{\circle{3}}}
  \put(220,30){\color{black}{\circle{7}}}

 \put(220,40){\color{blue}{\circle*{5}}}
 \put(230,50){\color{blue}{\circle*{5}}}
 \put(240,60){\color{blue}{\circle*{5}}}
 \put(250,40){\color{blue}{\circle*{5}}}
 \put(260,50){\color{blue}{\circle*{5}}}
 \put(240,60){\color{red}{\circle{7}}}
 \put(240,50){\color{black}{\circle{5}}}
 \put(240,50){\color{black}{\circle{3}}}
 \put(240,50){\color{black}{\circle{7}}}

 \put(220,10){\color{blue}{\circle*{5}}}
 \put(230,20){\color{blue}{\circle*{5}}}
 \put(240,30){\color{blue}{\circle*{5}}}

\linethickness{0.4mm}
\put(210,0){\line(0,1){60}}
\put(210,60){\line(1,0){60}}
\put(220,0){\line(0,1){50}}
\put(220,50){\line(1,0){50}}
\end{picture}
\caption{The three possible configurations for NW corner} \label{NW}
\end{figure}

\begin{lemma} \label{lemma:stones1}
There are three configurations of the SW and NE corners of $P(i) \cap Y_{m,n}$.  When $i =1$ or $2$ we can remove one vertex from $P(i) \cap Y_{m,n} $ and still dominate that corner of $G_{m,n}$.
For $P(0) \cap Y_{m,n} $ we can remove two vertices from that corner and dominate $G_{m,n}$.  
\end{lemma}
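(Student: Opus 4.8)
The plan is to mirror the proof of the preceding lemma, exploiting a $180^\circ$ rotational symmetry to halve the work and then treating the SW corner directly. Under $(a,b)\mapsto(-a,-b)$ the set $P(i)$ is carried to $P(-i)$, and this rotation interchanges the NE and SW corners of $Y_{m,n}$; hence the NE corner of $P(i)\cap Y_{m,n}$ presents the same local picture as the SW corner of $P(-i)\cap Y_{m,n}$, and it suffices to analyze the SW corner for each $i\in\ZZ_3$. Since the symmetry fixes $P(0)$ and exchanges $P(1)$ with $P(2)$, this accounts for exactly the three configurations named in the statement. Throughout I would use the description $P(i)=\{(a,b): a-b\equiv i \bmod 3\}$, so that the members of $P(i)$ lie on the slope-$1$ diagonals; the crucial structural point is that these diagonals run \emph{radially into} the SW and NE corners rather than draping across them as at the NW and SE corners, which is precisely why those two corners require a separate argument and behave differently.

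First I would tabulate the reception strengths that $P(i)\cap Y_{m,n}$ delivers to the grid vertices in a fixed finite block at the SW corner, say $1\le a,b\le 4$ together with the surrounding frame. For $i=1$ the two frame vertices nearest the corner are $(1,0)$ on the bottom edge and $(0,2)$ on the left edge, and each supplies the single missing unit of reception to $(1,1)$ and $(1,2)$ respectively. Since $1-1\equiv 0\not\equiv 1$, the corner point $(1,1)$ is not itself a broadcasting vertex of $P(1)$, so I can delete $\{(1,0),(0,2)\}$ and insert the single vertex $(1,1)$: this restores reception $2$ at $(1,1)$ (from itself) and at $(1,2)$ (from $(1,1)$ together with the unchanged broadcaster $(1,3)$), while leaving every other vertex of the block at reception $\ge 2$. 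That is a net removal of one vertex, and $i=2$ is identical after reflecting the block across its main diagonal. I would display both moves in a figure analogous to Figure~\ref{NW}, with the deleted vertices circled in red and the inserted vertex marked by a bullseye.

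For $i=0$ the corner lattice point $(0,0)$ lies in $P(0)$, yet its broadcast neighborhood $N_2\big((0,0)\big)=\{(0,0),(\pm 1,0),(0,\pm 1)\}$ meets $G_{m,n}$ in the empty set, so $(0,0)$ dominates no grid vertex and may simply be deleted; this is one removal with no analogue for $i=1,2$. The second removal should come from pushing the boundary diagonals $a-b=\pm 3$ inward near the corner and consolidating, and \emph{verifying this is the main obstacle}: since every grid vertex in the corner block receives reception exactly $2$, there is no slack whatsoever, so I must check that after deleting the relevant frame vertices and re-inserting fewer interior vertices, the reception at every vertex of the block—and in particular at the seam where the rearranged corner meets the unaltered edge pattern produced by the condensation step—remains at least $2$. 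I expect to certify this finite check by hand or with the MPS algorithm, exactly as in the $3\times n$, $4\times n$, and $5\times n$ theorems, and to confirm by the same search that no third vertex can be removed. Combining the automatic deletion of $(0,0)$ with this rearrangement then gives the claimed net removal of two vertices for $P(0)$, while the merge move above gives the single removal for $P(1)$ and $P(2)$.
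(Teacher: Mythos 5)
Your overall strategy is the same as the paper's: a finite case analysis of the three local corner configurations, with explicit delete-and-reinsert exchanges, reducing the NE corner to the SW corner by a $180^\circ$ rotation (the paper's proof is exactly this, carried out in Figure~\ref{SW}). Your $i=1$ move (delete $(1,0)$ and $(0,2)$, insert $(1,1)$) is correct and fully verified --- the only grid vertices losing reception are $(1,1)$ and $(1,2)$, and both are restored --- and the diagonal reflection $(a,b)\mapsto(b,a)$ does carry $P(1)$ to $P(2)$, so the $i=2$ case follows; this matches the paper's exchanges up to the choice of inserted vertex. Your observation that $(0,0)\in P(0)$ broadcasts to no vertex of $G_{m,n}$ and can be deleted outright is also correct and is a cleaner way to obtain the first $P(0)$ removal than the paper's picture. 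However, there is a genuine gap: the second removal in the $P(0)$ case --- the one claim that distinguishes $P(0)$ from $P(1),P(2)$ and gives the lemma its content --- is never actually established. You describe it only as ``pushing the boundary diagonals $a-b=\pm 3$ inward and consolidating'' and explicitly defer verification to a future hand or MPS check, correctly noting that every corner vertex has reception exactly $2$ and there is no slack. A proof must exhibit the configuration; as written, the central case of the lemma is a plan, not an argument.

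The missing exchange is small and is what the paper's first panel of Figure~\ref{SW} shows: in addition to $(0,0)$, delete $(1,1)$, $(0,3)$, and $(3,0)$, and insert $(1,2)$ and $(2,1)$ (net removal of two in total). The check is short: $(1,1)$ receives $1+1$ from the adjacent new vertices $(1,2)$ and $(2,1)$; $(1,3)$ receives $1+1$ from $(1,2)$ and the retained $(1,4)$; $(3,1)$ receives $1+1$ from $(2,1)$ and the retained $(4,1)$; $(1,2)$ and $(2,1)$ broadcast strength $2$ to themselves; and no other vertex of $G_{m,n}$ received anything from the deleted vertices, while the insertions only increase reception. One further caveat on your symmetry reduction: the rotation $(a,b)\mapsto(-a,-b)$ fixes the origin, not $Y_{m,n}$; the rotation that swaps the NE and SW corners of $Y_{m,n}$ is $(a,b)\mapsto(n+1-a,\,m+1-b)$, under which $\phi$ shifts by $(n+1)+2(m+1)$, so the NE corner of $P(i)\cap Y_{m,n}$ matches the SW corner of $P(j)\cap Y_{m,n}$ with $j\equiv (n-m)-i \bmod 3$, not $P(-i)$ in general. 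This does not hurt the reduction --- every NE corner is still one of the three SW configurations, which is all the lemma needs, and the dependence on $m-n \bmod 3$ is exactly what the paper exploits in the theorem that follows --- but your stated correspondence is only valid when $m\equiv n \bmod 3$.
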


\begin{proof}
The configurations of $P(i) \cap Y_{m,n}$ for $i =0,1,2$ are depicted in Figure \ref{SW}.  
In each picture the vertices circled in red can be replaced by the vertices in the black bullseyes.
The NE corners have the same configuration after a $180^\circ$ rotation.
\end{proof}

 \begin{figure}[H]
\begin{picture}(70,70)(120,0)
\linethickness{.1 mm}
\multiput(10,10)(0,10){6}{\line(1,0){60}}
\multiput(10,10)(10,0){6}{\line(0,1){60}}

 \put(10,40){\color{blue}{\circle*{5}}}
 \put(10,40){\color{red}{\circle{7}}}

 \put(20,50){\color{blue}{\circle*{5}}}
 \put(30,60){\color{blue}{\circle*{5}}}
 
  \put(20,30){\color{black}{\circle{5}}}
  \put(20,30){\color{black}{\circle{3}}}
  \put(20,30){\color{black}{\circle{7}}}
 
  \put(30,20){\color{black}{\circle{5}}}
  \put(30,20){\color{black}{\circle{3}}}
  \put(30,20){\color{black}{\circle{7}}}

 \put(10,10){\color{blue}{\circle*{5}}}
 \put(10,10){\color{red}{\circle{7}}}

 \put(20,20){\color{blue}{\circle*{5}}}
 \put(20,20){\color{red}{\circle{7}}}

 \put(30,30){\color{blue}{\circle*{5}}}
 \put(40,40){\color{blue}{\circle*{5}}}
 \put(50,50){\color{blue}{\circle*{5}}}
 \put(60,60){\color{blue}{\circle*{5}}}

 \put(40,10){\color{blue}{\circle*{5}}}
 \put(40,10){\color{red}{\circle{7}}}

 \put(50,20){\color{blue}{\circle*{5}}}
 \put(60,30){\color{blue}{\circle*{5}}} 

\linethickness{0.4mm}
\put(10,10){\line(0,1){60}}
\put(10,10){\line(1,0){60}}
\put(20,20){\line(0,1){50}}
\put(20,20){\line(1,0){50}}

\linethickness{.1 mm}
\multiput(110,10)(0,10){6}{\line(1,0){60}}
\multiput(110,10)(10,0){6}{\line(0,1){60}}
 \put(160,40){\color{blue}{\circle*{5}}}
 \put(110,50){\color{blue}{\circle*{5}}}
 \put(120,60){\color{blue}{\circle*{5}}}
%
   \put(130,20){\color{black}{\circle{5}}}
   \put(130,20){\color{black}{\circle{3}}}
   \put(130,20){\color{black}{\circle{7}}}
 \put(160,10){\color{blue}{\circle*{5}}}
 \put(110,20){\color{blue}{\circle*{5}}}
 \put(110,20){\color{red}{\circle{7}}}
 \put(120,30){\color{blue}{\circle*{5}}}
 \put(130,40){\color{blue}{\circle*{5}}}
 \put(140,50){\color{blue}{\circle*{5}}}
 \put(150,60){\color{blue}{\circle*{5}}}
 \put(130,10){\color{blue}{\circle*{5}}}
 \put(130,10){\color{red}{\circle{7}}}
 \put(140,20){\color{blue}{\circle*{5}}}
 \put(150,30){\color{blue}{\circle*{5}}} 

\linethickness{0.4mm}
\put(110,10){\line(0,1){60}}
\put(110,10){\line(1,0){60}}
\put(120,20){\line(0,1){50}}
\put(120,20){\line(1,0){50}}

\linethickness{.1 mm}
\multiput(210,10)(0,10){6}{\line(1,0){60}}
\multiput(210,10)(10,0){6}{\line(0,1){60}}

 \put(250,40){\color{blue}{\circle*{5}}}
 \put(260,50){\color{blue}{\circle*{5}}}
 \put(210,60){\color{blue}{\circle*{5}}}
 
   \put(220,30){\color{black}{\circle{5}}}
   \put(220,30){\color{black}{\circle{3}}}
   \put(220,30){\color{black}{\circle{7}}}
 \put(250,10){\color{blue}{\circle*{5}}}
 \put(260,20){\color{blue}{\circle*{5}}}
 \put(210,30){\color{blue}{\circle*{5}}}
 \put(220,40){\color{blue}{\circle*{5}}}
 \put(230,50){\color{blue}{\circle*{5}}}
 \put(240,60){\color{blue}{\circle*{5}}}
 \put(220,10){\color{blue}{\circle*{5}}}
 \put(220,10){\color{red}{\circle{7}}}
 \put(210,30){\color{red}{\circle{7}}}
 \put(230,20){\color{blue}{\circle*{5}}}
 \put(240,30){\color{blue}{\circle*{5}}} 

\linethickness{0.4mm}
\put(210,10){\line(0,1){60}}
\put(210,10){\line(1,0){60}}
\put(220,20){\line(0,1){50}}
\put(220,20){\line(1,0){50}}
\end{picture}
\caption{The three possible configurations for SW corner} \label{SW}
\end{figure}
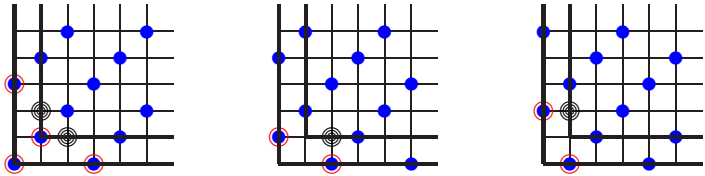

Lemmas \ref{lemma:stones1} and \ref{lemma:stones} show how to delete certain vertices from the corners of $P(i) \cap Y_{m,n}$ to obtain smaller dominating sets for $G_{m,n}$.  
However, these dominating sets contain points in the complement $Y_{m,n}-G_{m,n}$.
To create a dominating set that is completely contained in $G_{m,n}$ simply move each vertex in $Y_{m,n} - G_{m,n}$ to its nearest neighbor inside $G_{m,n}$. 
Denote the resulting dominating set $D_{m,n}$.  Figure \ref{Dmnexample} shows how the set $P(0) \cap Y_{7,8}$ can be modified to obtain the dominating set $D_{7,8}$ 
\begin{figure}[H]
\begin{picture}(410,110)(-30,0)
	
\linethickness{.1 mm}
	\multiput(130,10)(0,10){10}{\line(1,0){110}}
	\multiput(140,0)(10,0){10}{\line(0,1){110}}
    
	\put(180,50){\color{blue}{\circle*{5}}}
	\put(190,60){\color{blue}{\circle*{5}}}
 	\put(200,70){\color{blue}{\circle*{5}}}
 	\put(210,80){\color{blue}{\circle*{5}}}
	\put(170,40){\color{blue}{\circle*{5}}}  
	\put(160,30){\color{blue}{\circle*{5}}} 
	\put(150,30){\color{black}{\circle{3}}} 
        \put(150,30){\color{black}{\circle{5}}}
        \put(150,30){\color{black}{\circle{7}}}
 
        \put(160,20){\color{black}{\circle{3}}} 
        \put(160,20){\color{black}{\circle{5}}}
        \put(160,20){\color{black}{\circle{7}}}
	\put(150,50){\color{blue}{\circle*{5}}}
	\put(160,60){\color{blue}{\circle*{5}}}
        \put(170,70){\color{blue}{\circle*{5}}}
 	\put(180,80){\color{blue}{\circle*{5}}}
 	\put(190,80){\color{black}{\circle{5}}} 
	\put(190,80){\color{black}{\circle{3}}} 
	\put(190,80){\color{black}{\circle{7}}}
 	\put(150,70){\color{black}{\circle{5}}}
        \put(150,70){\color{black}{\circle{3}}}
        \put(150,70){\color{black}{\circle{7}}}
     	\put(160,80){\color{black}{\circle{5}}}
        \put(160,80){\color{black}{\circle{3}}}
        \put(160,80){\color{black}{\circle{7}}}
 	\put(210,50){\color{blue}{\circle*{5}}}
        \put(220,60){\color{blue}{\circle*{5}}}
 	\put(220,70){\color{black}{\circle{5}}}
 	\put(220,70){\color{black}{\circle{3}}}
 	\put(220,70){\color{black}{\circle{7}}}
	\put(200,40){\color{blue}{\circle*{5}}}  
	\put(190,30){\color{blue}{\circle*{5}}} 
	\put(180,20){\color{blue}{\circle*{5}}} 
 	\put(220,30){\color{blue}{\circle*{5}}} 
 	\put(210,20){\color{blue}{\circle*{5}}} 
 	\put(200,20){\color{black}{\circle{5}}} 
	\put(200,20){\color{black}{\circle{3}}}
	\put(200,20){\color{black}{\circle{7}}}
        \put(220,40){\color{black}{\circle{5}}}
        \put(220,40){\color{black}{\circle{3}}}
        \put(220,40){\color{black}{\circle{7}}} 
  
	\linethickness{0.4mm}
	\put(140,0){\line(0,1){110}}
	\put(130,10){\line(1,0){110}}
        \put(150,20){\color{red}{\line(1,0){70}}}
        \put(220,20){\color{red}{\line(0,1){60}}}
        \put(150,20){\color{red}{\line(0,1){60}}}
        \put(150,80){\color{red}{\line(1,0){70}}}

        \put(140,10){\color{green}{\line(1,0){90}}}
        \put(230,10){\color{green}{\line(0,1){80}}}
        \put(140,10){\color{green}{\line(0,1){80}}}
        \put(140,90){\color{green}{\line(1,0){90}}}
\end{picture}
\caption{The dominating set $D_{7,8}$} \label{Dmnexample}
\end{figure}
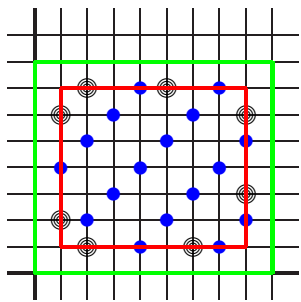
The cardinality of the set $D_{m,n}$ gives an upper bound on the domination number of $G_{m,n}$.
We now set about counting the number of elements in this dominating set.

First we will see that for any $m$ and $n$ in $\ZZ$ and $i \in \ZZ_3$ the number of elements in the intersection $P(i) \cap Y_{m,n}$
is either $ \left \lceil \frac{ (m+2)(n+2)}{3}  \right \rceil$ or $\left \lfloor \frac{(m+2)(n+2)}{3} \right \rfloor $.
The exact number depends on the three values of $i$, $\overline{m}$, and $\overline{n}$, where $\overline{m}$ and $\overline{n}$ are the
residues of $m$ and $n$ modulo $3$. 

\begin{lemma} \label{lemma:stones} The number of elements in $P(i) \cap Y_{m,n}$ is
\[ |P(i) \cap Y_{m,n} | =  \left \lceil \frac{ (m+2)(n+2)}{3} \right \rceil  \]
when the triple $(\overline{m}, \overline{n}, i)$ is contained in the set $\Psi$ where 
\[ \Psi = \left \{  \begin{matrix}     	
(0, 0, 0) ,
(0, 1, 0) ,
(0, 1, 1) ,
(0, 1, 2),
(0, 2, 0),
(0, 2, 1) ,
(1, 0, 0),
(1, 0, 1) , \\
(1, 0, 2), 
(1, 1, 0), 
(1, 1, 1),
(1, 1, 2), 
(1, 2 ,0), 
(1, 2, 1),
(1, 2, 2),
(2, 0, 0) ,\\
(2, 0, 2),
(2, 1, 0),
(2, 1, 1), 
(2, 1, 2), 
(2, 2, 0) \end{matrix} \right \}  \]
For all other triples $(\overline{m}, \overline{n}, i)$
\[ |P(i) \cap Y_{m,n} | =  \left \lfloor \frac{ (m+2)(n+2)}{3} \right \rfloor  .\]

\end{lemma}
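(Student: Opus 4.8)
The plan is to treat the statement as a pure lattice-point counting problem. Since $Y_{m,n}$ is the full rectangle consisting of the $(m+2)(n+2)$ lattice points with $0\le a\le n+1$ and $0\le b\le m+1$, and since $P(0),P(1),P(2)$ partition $\ZZ\times\ZZ$ according to the value of $\phi(a,b)=a+2b$ modulo $3$, the three numbers $|P(i)\cap Y_{m,n}|$ are exactly the sizes of the three residue classes of $a+2b\pmod 3$ inside this rectangle. They sum to $(m+2)(n+2)$, so it suffices to show each class size deviates from the average $(m+2)(n+2)/3$ by strictly less than $1$, and then to identify, for each admissible triple, whether the class is the large one (the ceiling) or a small one (the floor); that second conclusion is what defines $\Psi$.

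First I would compute the one-dimensional residue distributions. Writing $N=n+2$ and $M=m+2$, let $\alpha_r=\#\{0\le a\le N-1: a\equiv r\}$ and $\beta_{r'}=\#\{0\le b\le M-1: b\equiv r'\}$. If $N=3q_N+s_N$ with $s_N\in\{0,1,2\}$ then $\alpha_r=q_N+[\,r<s_N\,]$, and similarly for $\beta$; since $s_N\equiv \overline n-1$ and $s_M\equiv\overline m-1 \pmod 3$, these distributions are governed exactly by $\overline n$ and $\overline m$. Then
\[ |P(i)\cap Y_{m,n}| \;=\; \sum_{r+2r'\equiv i} \alpha_r\,\beta_{r'}. \]
Each residue $i$ is hit by exactly three pairs $(r,r')$, so expanding $\alpha_r\beta_{r'}=(q_N+e_r)(q_M+f_{r'})$ with $e_r=[\,r<s_N\,]$ and $f_{r'}=[\,r'<s_M\,]$ collapses the three terms independent of the indicators into a single $i$-independent integer $K=3q_Nq_M+q_Ns_M+q_Ms_N$. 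What remains is the correction $g_i=\sum_{r+2r'\equiv i} e_r f_{r'}$, so that $|P(i)\cap Y_{m,n}|=K+g_i$, while $(m+2)(n+2)=3K+s_Ns_M$.

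From here the lemma reduces to comparing $g_i$ with $\lfloor s_Ns_M/3\rfloor$ and $\lceil s_Ns_M/3\rceil$: since $K+\lfloor s_Ns_M/3\rfloor=\lfloor (m+2)(n+2)/3\rfloor$ and $K+\lceil s_Ns_M/3\rceil=\lceil (m+2)(n+2)/3\rceil$, I only have to prove $g_i\in\{\lfloor s_Ns_M/3\rfloor,\lceil s_Ns_M/3\rceil\}$ and decide which. Because the number of pairs $(r,r')$ with $e_rf_{r'}=1$ is exactly $s_Ns_M\le 4$, and these pairs land in distinct residue classes unless $s_N=s_M=2$, one checks directly that $g_i$ never exceeds the ceiling; this gives the uniform ``within $1$'' bound. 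The only remaining work is the finite table: for each of the nine pairs $(s_N,s_M)$ — equivalently $(\overline n,\overline m)$ — and each $i\in\ZZ_3$, read off $g_i$, record whether $|P(i)\cap Y_{m,n}|$ equals the ceiling, and collect those ceiling triples into $\Psi$.

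I expect the main obstacle to be entirely in this last bookkeeping, specifically the asymmetry forced by the coefficient $2$ in $\phi$. When $\overline n=1$ or $\overline m=1$ (i.e. $3\mid N$ or $3\mid M$) the product is divisible by $3$, all three classes coincide, and those triples are harmless; the genuine content sits in the four cases with $\overline m,\overline n\in\{0,2\}$. There $r+2r'\equiv r-r'\pmod 3$ is antisymmetric in the two coordinates, so the ceiling classes for $(\overline m,\overline n)$ and for $(\overline n,\overline m)$ are related by the reflection $i\mapsto -i$; keeping this reflection straight — rather than treating $\phi$ as symmetric in $a$ and $b$ — is exactly the step where a mislabeling of $\Psi$ could creep in, so I would cross-check every such entry against a direct count on a small grid such as $G_{3,2}$ versus $G_{2,3}$. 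A clean alternative that avoids the indicator bookkeeping is a roots-of-unity filter: with $\omega=e^{2\pi i/3}$ one has $|P(i)\cap Y_{m,n}|=\tfrac13\big((m+2)(n+2)+2\,\mathrm{Re}(\omega^{-i}A(\omega)B(\omega))\big)$, where $A(\omega)=\sum_{a=0}^{N-1}\omega^a$ and $B(\omega)=\sum_{b=0}^{M-1}\omega^{2b}$ take the values $0,1,-\omega^2$ and $0,1,-\omega$ respectively according to $N,M\bmod 3$; evaluating the three real parts makes both the uniform bound and the precise set $\Psi$ drop out mechanically.
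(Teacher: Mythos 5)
Your reduction is correct, and it reaches the lemma by a more self-contained route than the paper. The paper splits $Y_{m,n}$ into four regions of sizes $3a\times 3b$, $3a\times\overline{n+2}$, $\overline{m+2}\times 3b$, and $\overline{m+2}\times\overline{n+2}$, counts exact thirds in the three large regions (one element of $P(i)$ per three consecutive vertices in a row or column), and then disposes of the residual block by checking all $27$ triples $(\overline{m+2},\overline{n+2},i)$ in SAGE. Your identity $|P(i)\cap Y_{m,n}|=K+g_i$ with $(m+2)(n+2)=3K+s_Ns_M$ is the same reduction arrived at algebraically rather than geometrically: your correction term $g_i$ is exactly the paper's residual count $|P(i)\cap R_4|$. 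The difference is that where the paper outsources the residual to a computer, your observation that only $s_Ns_M\le 4$ pairs $(r,r')$ contribute, landing in distinct residue classes except when $s_N=s_M=2$, settles all cases by hand in a few lines (one checks $g_i\in\{\lfloor s_Ns_M/3\rfloor,\lceil s_Ns_M/3\rceil\}$ directly, with $g=(2,1,1)$ in the $s_N=s_M=2$ case), and your roots-of-unity filter packages the same computation even more mechanically. What the paper's template buys is reusability: the identical region decomposition recurs in Lemma~\ref{lemma:domin} (mod $13$) and in Theorem~\ref{32dominationbound} (mod $8$), where the residual block is too large to enumerate pleasantly by hand. What yours buys is a fully human-checkable proof in the mod-$3$ case and a closed form for the deviation from the mean.

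Your warning about the reflection $i\mapsto -i$ is not hypothetical: carrying out your table under the paper's stated conventions ($\phi(x,y)=x+2y$ with $0\le a\le n+1$ and $0\le b\le m+1$, so the coefficient-$1$ coordinate has $n+2$ values) disagrees with the printed $\Psi$ in exactly the asymmetric entries. For instance $Y_{2,3}$ (so $\overline m=2$, $\overline n=0$, twenty points) has $\bigl(|P(0)|,|P(1)|,|P(2)|\bigr)=(7,7,6)$, so the ceiling triples are $(2,0,0)$ and $(2,0,1)$, whereas $\Psi$ lists $(2,0,0)$ and $(2,0,2)$; dually, a direct count on $Y_{3,2}$ gives $(7,6,7)$, so $(0,2,1)\in\Psi$ should read $(0,2,2)$. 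The printed $\Psi$ is precisely what one obtains from $\phi(x,y)=2x+y$, i.e.\ with the roles of the two coordinates transposed, so the cross-check you propose on $G_{3,2}$ versus $G_{2,3}$ would catch exactly this. The slip is harmless downstream: since it only permutes the labels $i$ within each fixed pair $(\overline m,\overline n)$, the multiset of cardinalities, the floor/ceiling dichotomy, and hence the bounds in the subsequent theorem are unaffected. But your write-up should either fix those two entries of $\Psi$ or state the convention matching the paper's code, and you should actually include the nine-row table rather than deferring it, since it is the entire content of the set $\Psi$.
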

\begin{proof}
The set $Y_{m,n}$ is a subset of $\ZZ \times \ZZ$ containing $(m+2) \times (n+2)$ vertices.
Use the Euclidean algorithm to write $m+2 = 3a+\overline{m+2}$ and $n+2 = 3b + \overline{n+2}$ where $\overline{m+2}$ and $\overline{n+2}$ are the residues modulo $3$. 
Break $Y_{m,n}$ into four regions $R_1, R_2, R_3,$ and $R_4$ as depicted in Figure 
\ref{fourparts1} with $3a\times 3b$,  $3a \times \overline{n+2}$, $\overline{m +2 } \times 3b$, and $\overline{m+ 2} \times \overline{n+2}$ vertices in each region respectively. 
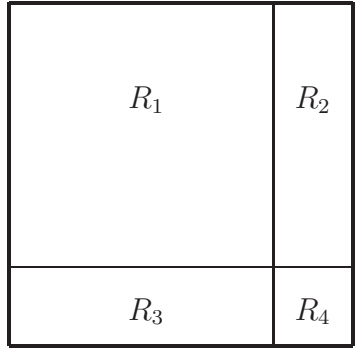
\begin{figure}[h]
\begin{picture}(130,130)(0,0)
	\linethickness{0.2mm}
	\put(100,0){\line(0,1){130}}
	\put(0,30){\line(1,0){130}}
        \put(45,90){$R_1$}
        \put(108,90){$R_2$}
        \put(45,10){$R_3$}
        \put(108,10){$R_4$}
	\linethickness{0.4mm}
	\put(0,0){\line(0,1){130}}
	\put(0,130){\line(1,0){130}}
	\put(130,0){\line(0,1){130}}
	\put(0,0){\line(1,0){130}}
\end{picture}
  \caption{Four regions of $Y_{m,n}$} \label{fourparts1}
\end{figure}

We now count how many elements of $P(i)$ are in each region.  
Since $\phi$ is a homomorphism from $\ZZ \times \ZZ$ to $\ZZ_3$, every three consecutive vertices in any row or column of $G_{m,n}$ will have exactly one element 
in the inverse image $\phi^{-1}(i) =P(i)$.  It follows that $|P(i) \cap R_1| = 3ab$, $|P(i) \cap R_2|= a (\overline{n+2})$, and $|P(i) \cap R_3| = (\overline{m+2})b$.

All that remains is to count the number of elements in $P(i) \cap R_4$.  There are $9$ possible sizes for the  $\overline{m+2} \times \overline{n+2}$ grid $R_4$ and three possible values for $i \in \ZZ_3$ defining $P(i)$.
Hence by checking all 27 of these cases, we can verify that when  $(\overline{m+2},\overline{n+2}, i)$ is in the set 
\[ \Psi' = \left \{ \begin{matrix} (0, 0, 0), 
(0, 0, 1),
(0, 0, 2), 
(0, 1, 0), 
(0, 1, 1),
(0, 1, 2), 
(0, 2, 0), 
(0, 2, 1), \\
(0, 2, 2),
(1, 0, 0),
(1, 0, 1),
(1, 0, 2), 
(1, 1, 0),
(1, 2, 0),
(1, 2, 2),
(2, 0, 0),\\
(2, 0, 1),
(2, 0, 2),
(2, 1, 0),
(2, 1, 1),
(2, 2, 0) \end{matrix} \right \} \] the cardinality of $P(i) \cap R_4$ is
$ | P(i) \cap R_4 | =  \left \lceil \frac{(\overline{m+2})(\overline{n+2})}{3} \right \rceil $, and 
$| P(i) \cap R_4 | =  \left \lfloor \frac{(\overline{m+2})(\overline{n+2})}{3} \right \rfloor $ otherwise \footnote{The calculation is available at the site \\
{\tt https://cloud.sagemath.com/projects/26b983ae-a894-47c0-bd54-c73b071e555c/files/(2,2)BDC.sagews }}.
However, a triple $(\overline{m+2}, \overline{n+2}, i) $ is contained in the set $\Psi'$ if and only if the triple $(\overline{m}, \overline{n} , i )$ is contained in the set
\[ \Psi = \left \{  \begin{matrix}     	
(0, 0, 0) ,
(0, 1, 0) ,
(0, 1, 1) ,
(0, 1, 2),
(0, 2, 0),
(0, 2, 1) ,
(1, 0, 0),
(1, 0, 1) , \\
(1, 0, 2), 
(1, 1, 0), 
(1, 1, 1),
(1, 1, 2), 
(1, 2 ,0), 
(1, 2, 1),
(1, 2, 2),
(2, 0, 0) ,\\
(2, 0, 2),
(2, 1, 0),
(2, 1, 1), 
(2, 1, 2), 
(2, 2, 0) \end{matrix} \right \} .\]
Thus, the cardinality of $P(i) \cap R_4$ is $ | P(i) \cap R_4| = \left \lceil \frac{(\overline{m+2})(\overline{n+2})}{3} \right \rceil $
when $(\overline{m}, \overline{n}, i)$ is in $\Psi$, and it is  $ | P(i) \cap R_4| = \left \lfloor \frac{(\overline{m+2})(\overline{n+2})}{3} \right \rfloor $ 
when $(\overline{m}, \overline{n}, i)$ is not in $\Psi$. 
It follows that \begin{align*} |P(i) \cap Y_{m,n}| & =  |R_1 \cap P(i) |+  |R_2 \cap P(i) | +  |R_3 \cap P(i)|+ |R_4 \cap P(i)| \\
          & =  3ab+ (\overline{m+2}) a + b( \overline{n+2})+  |R_4 \cap P(i)| \\
          & =  3ab+ (\overline{m+2}) a + b( \overline{n+2})+ \left \lceil \frac{(\overline{m+2})(\overline{n+2})}{3} \right \rceil \\
          & = \left \lceil \frac{( 3a+\overline{m+2})(3b + \overline{n+2} )}{3}  \right \rceil  \\ & = \left \lceil \frac{(m+2)(n+2)}{3}  \right \rceil  \end{align*}
when $(\overline{m}, \overline{n}, i)$ is in $\Psi$, and the analagous calculation shows that $|P(i) \cap Y_{m,n}| =  \left \lfloor \frac{(m+2)(n+2)}{3}  \right \rfloor$
when $(\overline{m}, \overline{n}, i)$ is not in $\Psi$.
\end{proof}

Having counted the number of elements in $P(i) \cap Y_{m,n}$, we are now ready to count the number of elements in the dominating sets $D_{m,n}$. 
All we have to do is count the number of elements in $P(i) \cap Y_{m,n}$ that can be deleted by using Lemmas \ref{lemma:stones1} and \ref{lemma:stones}.  
Note that in order to delete vertices from all four corners of $P(i) \cap Y_{m,n}$ we must assume that $m$ and $n$ are both at least $6 = 2 \cdot 3$. 
Otherwise there would be overlap between the regions depicted in Figures \ref{NW} and \ref{SW}, and we may not be able to delete a vertex from each corner.
\begin{theorem}
 When $m,n \geq 6$, the $(2,2)$  broadcast domination number of $G_{m,n}$ satisfies the inequality
\[ \gamma_{2,2} (G_{m,n}) \leq  \left \lceil  \frac{(m+2)(n+2)}{3}  \right \rceil  - 6 \] when $m \equiv n  \mod 3$, and otherwise it satisfies
\[ \gamma_{2,2} (G_{m,n}) \leq \left  \lceil \frac{(m+2)(n+2)}{3} \right \rceil- 5. \]

\end{theorem}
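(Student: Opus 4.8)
The plan is to realize the dominating set $D_{m,n}$ explicitly from the single pattern $P(0)$ and then count its vertices using the machinery already assembled. First I would fix $i = 0$ and start from $P(0) \cap Y_{m,n}$, which was shown above to dominate $G_{m,n}$. By Lemma \ref{lemma:stones}, the cardinality is controlled by whether $(\overline{m}, \overline{n}, 0)$ lies in $\Psi$; since all nine triples of the form $(\cdot, \cdot, 0)$ appear in $\Psi$, this holds for every $\overline{m}, \overline{n} \in \ZZ_3$, so
\[ |P(0) \cap Y_{m,n}| = \left\lceil \frac{(m+2)(n+2)}{3} \right\rceil. \]
This fixes the starting count, and everything else is a matter of tallying how many vertices the corner lemmas allow us to delete.

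Next I would apply the two corner lemmas at each of the four corners, invoking the hypothesis $m, n \geq 6$ to guarantee (as noted after Lemma \ref{lemma:stones}) that the four corner regions are pairwise disjoint, so that the deletions are independent and their savings add. The first corner lemma removes one vertex from each of the NW and SE corners, for a total of $2$. For the SW corner, whose corner vertex $(0,0)$ satisfies $\phi(0,0) = 0$ and hence lies in $P(0)$, we are in the configuration of Lemma \ref{lemma:stones1} that permits deleting $2$ vertices. The NE corner, at $(n+1, m+1)$, has $\phi(n+1,m+1) = (n+1) + 2(m+1) \equiv n + 2m \equiv n - m \pmod 3$; by the $180^\circ$ rotational symmetry recorded in Lemma \ref{lemma:stones1}, this corner sits in the ``delete two'' configuration exactly when $n - m \equiv 0 \pmod 3$, i.e.\ when $m \equiv n \pmod 3$, and otherwise only one vertex can be removed there.

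Collecting the savings, the number of vertices removed is $1 + 1 + 2 + 2 = 6$ when $m \equiv n \pmod 3$ and $1 + 1 + 2 + 1 = 5$ otherwise. Finally, relocating every remaining vertex of $Y_{m,n} - G_{m,n}$ to its nearest neighbor inside $G_{m,n}$ produces a set $D_{m,n} \subseteq G_{m,n}$ that still dominates $G_{m,n}$ and whose cardinality does not exceed the count after deletion. Therefore
\[ \gamma_{2,2}(G_{m,n}) \le |D_{m,n}| \le \left\lceil \frac{(m+2)(n+2)}{3} \right\rceil - 6 \]
when $m \equiv n \pmod 3$, and $\left\lceil \tfrac{(m+2)(n+2)}{3} \right\rceil - 5$ otherwise, which is the assertion.

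The step I expect to be the main obstacle is the NE corner analysis: correctly matching the rotated configuration of Lemma \ref{lemma:stones1} to the residue $\phi(n+1,m+1) \equiv n - m$, and thereby isolating $m \equiv n \pmod 3$ as precisely the case in which the extra deletion is available. One must check that the $180^\circ$ rotation carries the ``corner occupied'' ($i=0$) configuration of the SW corner to the NE configuration that occurs exactly when the NE corner vertex is itself occupied, and one must ensure there is no double counting between the corner deletions and the subsequent relocation of $Y_{m,n} - G_{m,n}$. The pairwise disjointness guaranteed by $m, n \geq 6$ is exactly what makes these two operations commute cleanly and lets the four corner contributions be summed without interference.
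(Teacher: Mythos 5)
Your proposal is correct and takes essentially the same route as the paper's own proof: start from $P(0)\cap Y_{m,n}$, use Lemma \ref{lemma:stones} to get the count $\left\lceil \frac{(m+2)(n+2)}{3}\right\rceil$ (all nine triples $(\overline{m},\overline{n},0)$ indeed lie in $\Psi$), harvest one deletion at each of the NW and SE corners and two at the SW corner, with the second NE deletion available precisely when the corner vertex lies in the dominating set, and finish by relocating the vertices of $Y_{m,n}-G_{m,n}$ inward under the disjointness guaranteed by $m,n\geq 6$. Your only deviation is cosmetic and arguably cleaner: you work with $P(0)$ alone and make explicit the residue computation $\phi(n+1,m+1)\equiv n-m \pmod 3$ governing the NE configuration, whereas the paper, in the case $m\not\equiv n \pmod 3$, instead notes that the set $P(i)$ with $i\equiv m-n \pmod 3$ yields two NE deletions, both readings producing the same totals of $6$ and $5$.
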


\begin{proof}
When $m \equiv n \mod 3$, Lemma \ref{lemma:stones1}  implies that we can delete one vertex from each of the NW and SE corners of $P(0) \cap Y_{m,n}$ and
 two vertices each from the NE and SW corners of $P(0) \cap Y_{m,n}$ to obtain the dominating set $D_{m,n}$. Thus $|D_{m,n}| = |P(0) \cap Y_{m,n}|-6$.  
Also, when $m \equiv n \mod 3$, there are $\left \lceil \frac{(m+2)(n+2)}{3} \right \rceil$ vertices in $P(0) \cap Y_{m,n}$ by Lemma \ref{lemma:stones}. 
The resulting dominating set $D_{m,n}$ has $\left  \lceil \frac{(n+2)(m+2)}{3} \right \rceil - 6$ vertices in it. 
On the other hand, for $i =1$ and $2$ there are $\left  \lfloor \frac{(n+2)(m+2)}{3} \right \rfloor$ vertices in $P(i) \cap Y_{m, n}$,
but we can only remove 4 vertices and still dominate $G_{m,n}$.
So the best bound on the $(2,2)$  broadcast domination number is  
\[ \gamma_{2,2}(G_{m,n}) \leq \left  \lceil \frac{(n+2)(m+2)}{3} \right \rceil - 6 \]
when $m \equiv n \mod 3$.

When $ m \not \equiv n \mod 3$, there are  $ \left  \lceil \frac{(n+2)(m+2)}{3} \right \rceil $ vertices in $P(0)\cap Y_{m,n}$ and $P(i) \cap Y_{m,n}$  where $m - n \equiv i \mod 3$. 
Also when $m-n \equiv i \mod 3$, we can delete two vertices from the SW corner of $P(0) \cap Y_{m,n}$
and two vertices from the NE corner of $P(i) \cap Y_{m,n}.$ Thus we can delete a total five vertices from both $P(0) \cap Y_{m,n}$ and $P(i) \cap Y_{m,n}$.  

On the other hand, there are  $ \left  \lfloor \frac{(n+2)(m+2)}{3} \right \rfloor $ vertices in $P(j) \cap Y_{m,n}$ where $j \neq 0$ and $m -n \not \equiv j \mod 3$.  
In this case, we can only remove four vertices from $P(j)\cap Y_{m,n}$ and still dominate $G_{m,n}$.  
In any case, the $(2,2)$ broadcast domination number satisfies
\[ \gamma_{2,2}(G_{m,n}) \leq \left  \lceil \frac{(n+2)(m+2)}{3} \right \rceil - 5 \]
when $m \not \equiv n \mod 3$.   
\end{proof}

\subsection{(3,1) broadcast domination}
In this subsection we give an upper bound for the $(3,1)$ broadcast domination number of an $m \times n$ grid.   
We start by identifying a family of optimal $(3,1)$ broadcast dominating sets for $\ZZ \times \ZZ$.
Then we show how these sets can be modified to define an efficient dominating set for each $G_{m,n}$.  
  
Let $i \in \ZZ_{13}$ and let  $P(i)= \phi^{-1}(i)$ denote the preimage of $i$ under the homomorphism $\phi: \ZZ \times \ZZ \rightarrow \ZZ_{13}$ defined by $\phi(x,y) = 4x+7y$. 
The set $P(0)$ is depicted in Figure \ref{P(t)}.  The other sets are obtained by shifting $P(0)$ appropriately.

The set $P(i)$ dominates $\ZZ \times \ZZ $ optimally in the sense that all of $\ZZ \times \ZZ$ is dominated, and no vertex in $\ZZ \times \ZZ$ is dominated by more than one vertex of $P(i)$.
For any $i \in \ZZ_{13}$ every thirteen consecutive vertices in a row or column contain exactly one element of $P(i).$

\begin{figure}[h]
\begin{picture}(210,210)(100,0)
	\linethickness{.1 mm}
	\multiput(110,10)(0,10){19}{\line(1,0){200}}
	\multiput(120,0)(10,0){19}{\line(0,1){200}}
        \put(210,100){\color{blue}{\circle*{5}}}
        \put(240,120){\color{blue}{\circle*{5}}}
        \put(180,80){\color{blue}{\circle*{5}}}
        \put(150,60){\color{blue}{\circle*{5}}}
        \put(120,40){\color{blue}{\circle*{5}}}
        \put(270,140){\color{blue}{\circle*{5}}}
        \put(300,160){\color{blue}{\circle*{5}}}

        \put(190,130){\color{blue}{\circle*{5}}}
        \put(220,150){\color{blue}{\circle*{5}}}
        \put(160,110){\color{blue}{\circle*{5}}}
        \put(130,90){\color{blue}{\circle*{5}}}
        \put(250,170){\color{blue}{\circle*{5}}}
        \put(280,190){\color{blue}{\circle*{5}}}
        \put(170,160){\color{blue}{\circle*{5}}}
        \put(200,180){\color{blue}{\circle*{5}}}
        \put(140,140){\color{blue}{\circle*{5}}}
        \put(150,190){\color{blue}{\circle*{5}}}
        \put(120,170){\color{blue}{\circle*{5}}}
        
        \put(230,70){\color{blue}{\circle*{5}}}
        \put(260,90){\color{blue}{\circle*{5}}}
        \put(200,50){\color{blue}{\circle*{5}}}
        \put(170,30){\color{blue}{\circle*{5}}}
        \put(140,10){\color{blue}{\circle*{5}}}
        \put(290,110){\color{blue}{\circle*{5}}}
       
        \put(250,40){\color{blue}{\circle*{5}}}
        \put(280,60){\color{blue}{\circle*{5}}}
        \put(220,20){\color{blue}{\circle*{5}}}
        \put(270,10){\color{blue}{\circle*{5}}}
        \put(300,30){\color{blue}{\circle*{5}}}

	\linethickness{0.4mm}
	\put(210,0){\line(0,1){200}}
	\put(110,100){\line(1,0){200}}
\end{picture}
\caption{ The optimal dominating set $P(0)$ of $\ZZ \times \ZZ$  } \label{P(t)}
\end{figure}
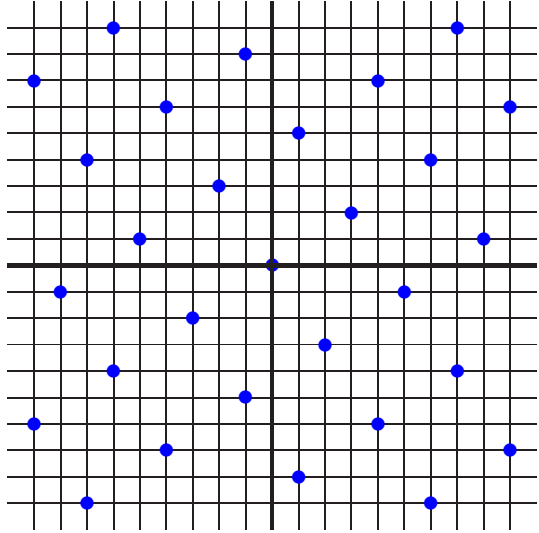
%
%

Embed $G_{m,n}$ into $\ZZ \times \ZZ$ as the following set: \[ G_{m,n} = \{ (a,b) \in \ZZ \times \ZZ  \mid 2 \leq a \leq n+1  \text{ and } 2 \leq b \leq m+1 \}, \]
and let $Y_{m,n} \cong G_{m+4,n+4} $ denote the neighborhood of $G_{m,n}$: \[Y_{m,n} = \{ (a,b) \in \ZZ \times \ZZ \mid 0 \leq a \leq n+3 \text{ and }  0 \leq b \leq m+3 \}. \]

\begin{lemma} \label{lemma:rearrange}
We can delete one vertex from any of the dominating sets $P(i) \cap Y_{m,n}$ at each corner of $G_{m,n}$ and the resulting set will still dominate $G_{m,n}$. 
\end{lemma}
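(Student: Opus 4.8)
The plan is to reduce the lemma to a purely local statement at a single corner and then to verify that statement by a short finite case analysis. First I would record the governing reinterpretation: since a broadcasting vertex of strength $3$ reaches exactly the vertices within distance $2$, and we only require reception strength $\tw = 1$, a set is a $(3,1)$ broadcast dominating set precisely when every vertex lies within distance $2$ of a chosen vertex; that is, $(3,1)$ domination is covering by Lee balls of radius $2$. The thirteen-cell balls centered at the points of $P(i) = \phi^{-1}(i)$ tile $\ZZ \times \ZZ$ perfectly, so every vertex of $G_{m,n}$ lies in exactly one such ball, whose center lies within distance $2$ of $G_{m,n}$ and hence inside $Y_{m,n}$. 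Thus $P(i) \cap Y_{m,n}$ already dominates $G_{m,n}$, which is the starting point for the deletion.

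Next I would localize. For $m$ and $n$ sufficiently large the four corners of $Y_{m,n}$ are pairwise far apart, so a modification made within a bounded neighborhood of one corner cannot affect the coverage of any $G_{m,n}$-vertex near another corner, nor of the vertices in the interior strips, where the perfect tiling already covers everything. It therefore suffices to treat one corner, say the SW corner; the opposite NE corner is then handled by the global $180^\circ$ rotation, which sends $P(i)$ to $P(-i)$ and interchanges the two corners, and the remaining two corners are treated identically.

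At the chosen corner I would isolate the \emph{boundary dominators}: those points of $P(i) \cap Y_{m,n}$ lying in $Y_{m,n} \setminus G_{m,n}$ whose radius-$2$ ball meets $G_{m,n}$. Because the border $Y_{m,n} \setminus G_{m,n}$ has width $2$ while each ball has radius $2$, each boundary dominator spends most of its coverage on the (irrelevant) border and contributes only a small \emph{cap} of $G_{m,n}$-vertices clustered against the corner. The key step is to show that the union of these caps can be re-covered by one fewer vertex placed inside $G_{m,n}$: for instance, when two boundary dominators meet the two corner edges, their caps can be merged and covered by a single interior ball centered near the inner corner. I would display the relevant configurations in figures, circling in red the boundary dominators to be removed and marking with a bullseye the interior vertices that replace them, exactly as in the $(2,2)$ corner lemmas, so that the net effect is the removal of one vertex while domination of $G_{m,n}$ is preserved.

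The main obstacle is that the position of the corner relative to the period-$13$ tiling varies with $i$, producing thirteen distinct local configurations (reduced somewhat by the rotational and reflective symmetries of the radius-$2$ ball), and in each one must exhibit a re-covering that leaves no $G_{m,n}$-vertex uncovered and uses interior centers only. I expect this verification to be mechanical once the caps have been identified, and it can be confirmed uniformly by the MPS-style search already employed in the paper; the one point requiring genuine care is the bookkeeping that the replacement vertices indeed lie inside $G_{m,n}$ and that no cap of a boundary dominator extends beyond the region handled by the chosen interior replacements.
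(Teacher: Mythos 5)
Your proposal is correct and is essentially the paper's own argument: the paper likewise reduces to a single corner by symmetry (it displays the NW corner and handles the other three by rotation or a diagonal flip) and then verifies the thirteen residue-determined local configurations of $P(i) \cap Y_{m,n}$ one by one in a figure, replacing the red-circled vertices near the corner by one fewer bullseye vertices so that domination of $G_{m,n}$ is preserved. Your perfect-code ball-covering reformulation and the ``cap'' bookkeeping simply make explicit why that finite case check is local and exhaustive, so no further comparison is needed.
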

\begin{proof}
 Let $P(i) = \phi^{-1}(i)$ be one of the optimal dominating sets of $\ZZ \times \ZZ$.
 There are 13 different cases for what a corner of $P(i) \cap Y_{m,n}$ can look like.  
They are all depicted in Figure \ref{delete}.  
We display the 13 different configurations of the Northwest corner of $P(i) \cap Y_{m,n}$.
The other three corners are identical up to a rotation or flip across the diagonal of the depicted configurations.
In each of the 13 cases, we can delete one vertex from each corner $P(i) \cap Y_{m,n}$ and still obtain a dominating set of $G_{m,n}$.
This is done by replacing the vertices circled in red with those highlighted by a bullseye.  
\end{proof}

\begin{figure}
\scalebox{.85}{
\begin{picture}(130,130)(150,0)
	\linethickness{.1 mm}
	\multiput(10,10)(0,10){13}{\line(1,0){130}}
	\multiput(10,0)(10,0){13}{\line(0,1){130}}
        \put(10,130){\color{red}{\circle{7}}}
	\put(10,130){\color{blue}{\circle*{5}}}
	\put(30,100){\color{blue}{\circle*{5}}}
	\put(50,70){\color{blue}{\circle*{5}}}
	\put(80,90){\color{blue}{\circle*{5}}}
	\put(110,110){\color{blue}{\circle*{5}}}
	\put(60,120){\color{blue}{\circle*{5}}}
	\put(20,50){\color{blue}{\circle*{5}}}
	\put(40,20){\color{blue}{\circle*{5}}}
	\put(70,40){\color{blue}{\circle*{5}}}
	\put(100,60){\color{blue}{\circle*{5}}}
	\put(130,80){\color{blue}{\circle*{5}}}
	\put(90,10){\color{blue}{\circle*{5}}}
	\put(120,30){\color{blue}{\circle*{5}}}
	
	\linethickness{0.4mm}
	\put(10,0){\line(0,1){130}}
	\put(10,130){\line(1,0){130}}
	\put(30,0){\line(0,1){110}}
	\put(30,110){\line(1,0){110}}

	\linethickness{.1 mm}
	\multiput(150,10)(0,10){13}{\line(1,0){130}}
	\multiput(150,0)(10,0){13}{\line(0,1){130}}
	\put(270,130){\color{blue}{\circle*{5}}}
	\put(160,100){\color{blue}{\circle*{5}}}
	\put(160,100){\color{red}{\circle{7}}}
	\put(180,70){\color{blue}{\circle*{5}}}
	\put(210,90){\color{blue}{\circle*{5}}}
	\put(240,110){\color{blue}{\circle*{5}}}
	\put(190,120){\color{blue}{\circle*{5}}}
	\put(190,120){\color{red}{\circle{7}}}
	\put(180,110){\color{black}{\circle{7}}}
	\put(180,110){\color{black}{\circle{3}}}
	\put(180,110){\color{black}{\circle{5}}}
	\put(170,70){\color{black}{\circle{5}}}
	\put(170,70){\color{black}{\circle{3}}}
	\put(170,70){\color{black}{\circle{7}}}
	\put(150,50){\color{blue}{\circle*{5}}}
	\put(150,50){\color{red}{\circle{7}}}
	\put(170,20){\color{blue}{\circle*{5}}}
	\put(200,40){\color{blue}{\circle*{5}}}
	\put(230,60){\color{blue}{\circle*{5}}}
	\put(260,80){\color{blue}{\circle*{5}}}
	\put(220,10){\color{blue}{\circle*{5}}}
	\put(250,30){\color{blue}{\circle*{5}}}
	
	\linethickness{0.4mm}
	\put(150,0){\line(0,1){130}}
	\put(150,130){\line(1,0){130}}
	\put(170,0){\line(0,1){110}}
	\put(170,110){\line(1,0){110}}

	\linethickness{.1 mm}
	\multiput(290,10)(0,10){13}{\line(1,0){130}}
	\multiput(290,0)(10,0){13}{\line(0,1){130}}
	\put(400,130){\color{blue}{\circle*{5}}}
	\put(290,100){\color{blue}{\circle*{5}}}
	\put(290,100){\color{red}{\circle{7}}}
	\put(310,70){\color{blue}{\circle*{5}}}
	\put(340,90){\color{blue}{\circle*{5}}}
	\put(370,110){\color{blue}{\circle*{5}}}
	\put(320,120){\color{blue}{\circle*{5}}}
	\put(320,120){\color{red}{\circle{7}}}
	\put(310,110){\color{black}{\circle{7}}}
	\put(310,110){\color{black}{\circle{5}}}
	\put(310,110){\color{black}{\circle{3}}}
	\put(410,50){\color{blue}{\circle*{5}}}
	\put(300,20){\color{blue}{\circle*{5}}}
	\put(330,40){\color{blue}{\circle*{5}}}
	\put(360,60){\color{blue}{\circle*{5}}}
	\put(390,80){\color{blue}{\circle*{5}}}
	\put(350,10){\color{blue}{\circle*{5}}}
	\put(380,30){\color{blue}{\circle*{5}}}
	
	\linethickness{0.4mm}
	\put(290,0){\line(0,1){130}}
	\put(290,130){\line(1,0){130}}
	\put(310,0){\line(0,1){110}}
	\put(310,110){\line(1,0){110}}
\end{picture}}\\

\vspace{12 pt}
\scalebox{.85}{
\begin{picture}(130,130)(150,0)
	\linethickness{.1 mm}
	\multiput(10,10)(0,10){13}{\line(1,0){130}}
	\multiput(10,0)(10,0){13}{\line(0,1){130}}
	\put(110,130){\color{blue}{\circle*{5}}}
	\put(130,100){\color{blue}{\circle*{5}}}
	\put(20,70){\color{blue}{\circle*{5}}}
	\put(50,90){\color{blue}{\circle*{5}}}
	\put(80,110){\color{blue}{\circle*{5}}}
	\put(30,120){\color{blue}{\circle*{5}}}
	\put(120,50){\color{blue}{\circle*{5}}}
	\put(10,20){\color{blue}{\circle*{5}}}
	\put(40,40){\color{blue}{\circle*{5}}}
	\put(70,60){\color{blue}{\circle*{5}}}
	\put(100,80){\color{blue}{\circle*{5}}}
	\put(60,10){\color{blue}{\circle*{5}}}
	\put(90,30){\color{blue}{\circle*{5}}}
	\put(110,130){\color{red}{\circle{7}}}
	\put(80,110){\color{red}{\circle{7}}}
	\put(20,70){\color{red}{\circle{7}}}
	\put(30,120){\color{red}{\circle{7}}}
	\put(50,90){\color{red}{\circle{7}}}
	\put(40,100){\color{black}{\circle{7}}}
	\put(40,100){\color{black}{\circle{5}}}
	\put(40,100){\color{black}{\circle{3}}}
	\put(60,90){\color{black}{\circle{7}}}
	\put(60,90){\color{black}{\circle{5}}}
	\put(60,90){\color{black}{\circle{3}}}
	\put(30,70){\color{black}{\circle{7}}}
	\put(30,70){\color{black}{\circle{5}}}
	\put(30,70){\color{black}{\circle{3}}}
	\put(90,110){\color{black}{\circle{7}}}
	\put(90,110){\color{black}{\circle{5}}}
	\put(90,110){\color{black}{\circle{3}}}
	
	\linethickness{0.4mm}
	\put(10,0){\line(0,1){130}}
	\put(10,130){\line(1,0){130}}
	\put(30,0){\line(0,1){110}}
	\put(30,110){\line(1,0){110}}
	\linethickness{.1 mm}
	\multiput(150,10)(0,10){13}{\line(1,0){130}}
	\multiput(150,0)(10,0){13}{\line(0,1){130}}
	\put(240,130){\color{blue}{\circle*{5}}}
	\put(260,100){\color{blue}{\circle*{5}}}
	\put(150,70){\color{blue}{\circle*{5}}}
	\put(150,70){\color{red}{\circle{7}}}
	\put(180,90){\color{blue}{\circle*{5}}}
	\put(180,90){\color{red}{\circle{7}}}
	\put(210,110){\color{red}{\circle{7}}}
	\put(210,110){\color{blue}{\circle*{5}}}
	\put(160,120){\color{blue}{\circle*{5}}}
	\put(160,120){\color{red}{\circle{7}}}
	\put(250,50){\color{blue}{\circle*{5}}}
	\put(270,20){\color{blue}{\circle*{5}}}
	\put(170,40){\color{blue}{\circle*{5}}}
	\put(200,60){\color{blue}{\circle*{5}}}
	\put(230,80){\color{blue}{\circle*{5}}}
	\put(200,10){\color{blue}{\circle*{5}}}
	\put(220,30){\color{blue}{\circle*{5}}}
	\put(170,110){\color{black}{\circle{7}}}
	\put(170,110){\color{black}{\circle{5}}}
	\put(170,110){\color{black}{\circle{3}}}
	\put(180,80){\color{black}{\circle{7}}}
	\put(180,80){\color{black}{\circle{5}}}
	\put(180,80){\color{black}{\circle{3}}}
	\put(210,100){\color{black}{\circle{7}}}
	\put(210,100){\color{black}{\circle{5}}}
	\put(210,100){\color{black}{\circle{3}}}
	
	\linethickness{0.4mm}
	\put(150,0){\line(0,1){130}}
	\put(150,130){\line(1,0){130}}
	\put(170,0){\line(0,1){110}}
	\put(170,110){\line(1,0){110}}

	\linethickness{.1 mm}
	\multiput(290,10)(0,10){13}{\line(1,0){130}}
	\multiput(290,0)(10,0){13}{\line(0,1){130}}
	\put(370,130){\color{blue}{\circle*{5}}}
	\put(390,100){\color{blue}{\circle*{5}}}
	\put(410,70){\color{blue}{\circle*{5}}}
	\put(310,90){\color{blue}{\circle*{5}}}
	\put(340,110){\color{blue}{\circle*{5}}}
	\put(290,120){\color{blue}{\circle*{5}}}
	\put(290,120){\color{red}{\circle{7}}}
	\put(380,50){\color{blue}{\circle*{5}}}
	\put(400,20){\color{blue}{\circle*{5}}}
	\put(300,40){\color{blue}{\circle*{5}}}
	\put(330,60){\color{blue}{\circle*{5}}}
	\put(360,80){\color{blue}{\circle*{5}}}
	\put(320,10){\color{blue}{\circle*{5}}}
	\put(350,30){\color{blue}{\circle*{5}}}
	
	\linethickness{0.4mm}
	\put(290,0){\line(0,1){130}}
	\put(290,130){\line(1,0){130}}
	\put(310,0){\line(0,1){110}}
	\put(310,110){\line(1,0){110}}
\end{picture}}\\

\vspace{12 pt}
\scalebox{.85}{
\begin{picture}(130,130)(150,0)

	\linethickness{.1 mm}
	\multiput(10,10)(0,10){13}{\line(1,0){130}}
	\multiput(10,0)(10,0){13}{\line(0,1){130}}
	\put(80,130){\color{blue}{\circle*{5}}}
	\put(80,130){\color{red}{\circle{7}}}
	\put(100,100){\color{blue}{\circle*{5}}}
	\put(120,70){\color{blue}{\circle*{5}}}
	\put(20,90){\color{blue}{\circle*{5}}}
	\put(20,90){\color{red}{\circle{7}}}
	\put(30,90){\color{black}{\circle{7}}}
	\put(30,90){\color{black}{\circle{5}}}
	\put(30,90){\color{black}{\circle{3}}}
	\put(50,110){\color{blue}{\circle*{5}}}
	\put(50,110){\color{red}{\circle{7}}}
	\put(60,110){\color{black}{\circle{7}}}
	\put(60,110){\color{black}{\circle{5}}}
	\put(60,110){\color{black}{\circle{3}}}
	\put(130,120){\color{blue}{\circle*{5}}}
	\put(90,50){\color{blue}{\circle*{5}}}
	\put(110,20){\color{blue}{\circle*{5}}}
	\put(10,40){\color{blue}{\circle*{5}}}
	\put(40,60){\color{blue}{\circle*{5}}}
	\put(70,80){\color{blue}{\circle*{5}}}
	\put(30,10){\color{blue}{\circle*{5}}}
	\put(60,30){\color{blue}{\circle*{5}}}
	
	\linethickness{0.4mm}
	\put(10,0){\line(0,1){130}}
	\put(10,130){\line(1,0){130}}
	\put(30,0){\line(0,1){110}}
	\put(30,110){\line(1,0){110}}

	\linethickness{.1 mm}
	\multiput(150,10)(0,10){13}{\line(1,0){130}}
	\multiput(150,0)(10,0){13}{\line(0,1){130}}
	\put(210,130){\color{blue}{\circle*{5}}}
	\put(210,130){\color{red}{\circle{7}}}
	\put(230,100){\color{blue}{\circle*{5}}}
	\put(250,70){\color{blue}{\circle*{5}}}
	\put(150,90){\color{blue}{\circle*{5}}}
	\put(150,90){\color{red}{\circle{7}}}
	\put(170,90){\color{black}{\circle{7}}}
	\put(170,90){\color{black}{\circle{5}}}
	\put(170,90){\color{black}{\circle{3}}}
	\put(180,110){\color{blue}{\circle*{5}}}
	\put(180,110){\color{red}{\circle{7}}}
	\put(190,110){\color{black}{\circle{7}}}
	\put(190,110){\color{black}{\circle{5}}}
	\put(180,110){\color{black}{\circle{3}}}
	\put(260,120){\color{blue}{\circle*{5}}}
	\put(220,50){\color{blue}{\circle*{5}}}
	\put(240,20){\color{blue}{\circle*{5}}}
	\put(270,40){\color{blue}{\circle*{5}}}
	\put(170,60){\color{blue}{\circle*{5}}}
	\put(200,80){\color{blue}{\circle*{5}}}
	\put(160,10){\color{blue}{\circle*{5}}}
	\put(190,30){\color{blue}{\circle*{5}}}
	
	\linethickness{0.4mm}
	\put(150,0){\line(0,1){130}}
	\put(150,130){\line(1,0){130}}
	\put(170,0){\line(0,1){110}}
	\put(170,110){\line(1,0){110}}

	\linethickness{.1 mm}
	\multiput(290,10)(0,10){13}{\line(1,0){130}}
	\multiput(290,0)(10,0){13}{\line(0,1){130}}
	\put(340,130){\color{blue}{\circle*{5}}}
	\put(340,130){\color{red}{\circle{7}}}
	\put(360,100){\color{blue}{\circle*{5}}}
	\put(380,70){\color{blue}{\circle*{5}}}
	\put(410,90){\color{blue}{\circle*{5}}}
	\put(310,70){\color{black}{\circle{7}}}
	\put(310,70){\color{black}{\circle{5}}}
	\put(310,70){\color{black}{\circle{3}}}
	\put(310,110){\color{blue}{\circle*{5}}}
	\put(310,110){\color{red}{\circle{7}}}
	\put(320,110){\color{black}{\circle{7}}}
	\put(320,110){\color{black}{\circle{5}}}
	\put(320,110){\color{black}{\circle{3}}}
	\put(390,120){\color{blue}{\circle*{5}}}
	\put(350,50){\color{blue}{\circle*{5}}}
	\put(370,20){\color{blue}{\circle*{5}}}
	\put(400,40){\color{blue}{\circle*{5}}}
	\put(300,60){\color{blue}{\circle*{5}}}
	\put(300,60){\color{red}{\circle{7}}}
	\put(330,80){\color{blue}{\circle*{5}}}
	\put(290,10){\color{blue}{\circle*{5}}}
	\put(320,30){\color{blue}{\circle*{5}}}
	
	\linethickness{0.4mm}
	\put(290,0){\line(0,1){130}}
	\put(290,130){\line(1,0){130}}
	\put(310,0){\line(0,1){110}}
	\put(310,110){\line(1,0){110}}
\end{picture}}\\

\vspace{12 pt}
\scalebox{.85}{
\begin{picture}(130,130)(150,0)
	\linethickness{.1 mm}
	\multiput(10,10)(0,10){13}{\line(1,0){130}}
	\multiput(10,0)(10,0){13}{\line(0,1){130}}
	\put(50,130){\color{blue}{\circle*{5}}}
	\put(50,130){\color{red}{\circle{7}}}
	\put(70,100){\color{blue}{\circle*{5}}}
	\put(90,70){\color{blue}{\circle*{5}}}
	\put(120,90){\color{blue}{\circle*{5}}}
	\put(20,110){\color{blue}{\circle*{5}}}
	\put(20,110){\color{red}{\circle{7}}}
	\put(30,110){\color{black}{\circle{7}}}
	\put(30,110){\color{black}{\circle{5}}}
	\put(30,110){\color{black}{\circle{3}}}
	\put(100,120){\color{blue}{\circle*{5}}}
	\put(60,50){\color{blue}{\circle*{5}}}
	\put(80,20){\color{blue}{\circle*{5}}}
	\put(110,40){\color{blue}{\circle*{5}}}
	\put(10,60){\color{blue}{\circle*{5}}}
	\put(40,80){\color{blue}{\circle*{5}}}
	\put(130,10){\color{blue}{\circle*{5}}}
	\put(30,30){\color{blue}{\circle*{5}}}
	
	\linethickness{0.4mm}
	\put(10,0){\line(0,1){130}}
	\put(10,130){\line(1,0){130}}
	\put(30,0){\line(0,1){110}}
	\put(30,110){\line(1,0){110}}

	\linethickness{.1 mm}
	\multiput(150,10)(0,10){13}{\line(1,0){130}}
	\multiput(150,0)(10,0){13}{\line(0,1){130}}
	\put(180,130){\color{blue}{\circle*{5}}}
	\put(200,100){\color{blue}{\circle*{5}}}
	\put(220,70){\color{blue}{\circle*{5}}}
	\put(250,90){\color{blue}{\circle*{5}}}
	\put(150,110){\color{blue}{\circle*{5}}}
	\put(150,110){\color{red}{\circle{7}}}
	\put(170,110){\color{black}{\circle{7}}}
	\put(170,110){\color{black}{\circle{5}}}
	\put(170,110){\color{black}{\circle{3}}}
	\put(230,120){\color{blue}{\circle*{5}}}
	\put(190,50){\color{blue}{\circle*{5}}}
	\put(210,20){\color{blue}{\circle*{5}}}
	\put(240,40){\color{blue}{\circle*{5}}}
	\put(270,60){\color{blue}{\circle*{5}}}
	\put(170,80){\color{blue}{\circle*{5}}}
	\put(260,10){\color{blue}{\circle*{5}}}
	\put(160,30){\color{blue}{\circle*{5}}}
	\put(180,130){\color{red}{\circle{7}}}
	
	\linethickness{0.4mm}
	\put(150,0){\line(0,1){130}}
	\put(150,130){\line(1,0){130}}
	\put(170,0){\line(0,1){110}}
	\put(170,110){\line(1,0){110}}

	\linethickness{.1 mm}
	\multiput(290,10)(0,10){13}{\line(1,0){130}}
	\multiput(290,0)(10,0){13}{\line(0,1){130}}
	\put(310,130){\color{blue}{\circle*{5}}}
	\put(310,130){\color{red}{\circle{7}}}
	\put(330,100){\color{blue}{\circle*{5}}}
	\put(350,70){\color{blue}{\circle*{5}}}
	\put(380,90){\color{blue}{\circle*{5}}}
	\put(410,110){\color{blue}{\circle*{5}}}
	\put(310,90){\color{black}{\circle{7}}}
	\put(310,90){\color{black}{\circle{5}}}
	\put(310,90){\color{black}{\circle{3}}}
	\put(360,120){\color{blue}{\circle*{5}}}
	\put(320,50){\color{blue}{\circle*{5}}}
	\put(340,20){\color{blue}{\circle*{5}}}
	\put(370,40){\color{blue}{\circle*{5}}}
	\put(400,60){\color{blue}{\circle*{5}}}
	\put(300,80){\color{blue}{\circle*{5}}}
	\put(300,80){\color{red}{\circle{7}}}
	\put(390,10){\color{blue}{\circle*{5}}}
	\put(290,30){\color{blue}{\circle*{5}}}
	
	\linethickness{0.4mm}
	\put(290,0){\line(0,1){130}}
	\put(290,130){\line(1,0){130}}
	\put(310,0){\line(0,1){110}}
	\put(310,110){\line(1,0){110}}
\end{picture}}\\

\vspace{12 pt}
\scalebox{.85}{
\begin{picture}(130,130)(150,0)
	\linethickness{.1 mm}
	\multiput(10,10)(0,10){13}{\line(1,0){130}}
	\multiput(10,0)(10,0){13}{\line(0,1){130}}
	\put(20,130){\color{blue}{\circle*{5}}}
	\put(20,130){\color{red}{\circle{7}}}
	\put(40,100){\color{blue}{\circle*{5}}}
	\put(60,70){\color{blue}{\circle*{5}}}
	\put(90,90){\color{blue}{\circle*{5}}}
	\put(120,110){\color{blue}{\circle*{5}}}

	\put(70,120){\color{blue}{\circle*{5}}}
	\put(30,50){\color{blue}{\circle*{5}}}
	\put(50,20){\color{blue}{\circle*{5}}}
	\put(80,40){\color{blue}{\circle*{5}}}
	\put(110,60){\color{blue}{\circle*{5}}}
	\put(10,80){\color{blue}{\circle*{5}}}
	
	\put(100,10){\color{blue}{\circle*{5}}}
	\put(130,30){\color{blue}{\circle*{5}}}
	
	\linethickness{0.4mm}
	\put(10,0){\line(0,1){130}}
	\put(10,130){\line(1,0){130}}
	\put(30,0){\line(0,1){110}}
	\put(30,110){\line(1,0){110}}
\end{picture}}
  \caption{Thirteen corner configurations for (3,1) broadcast domination } \label{delete}
\end{figure}
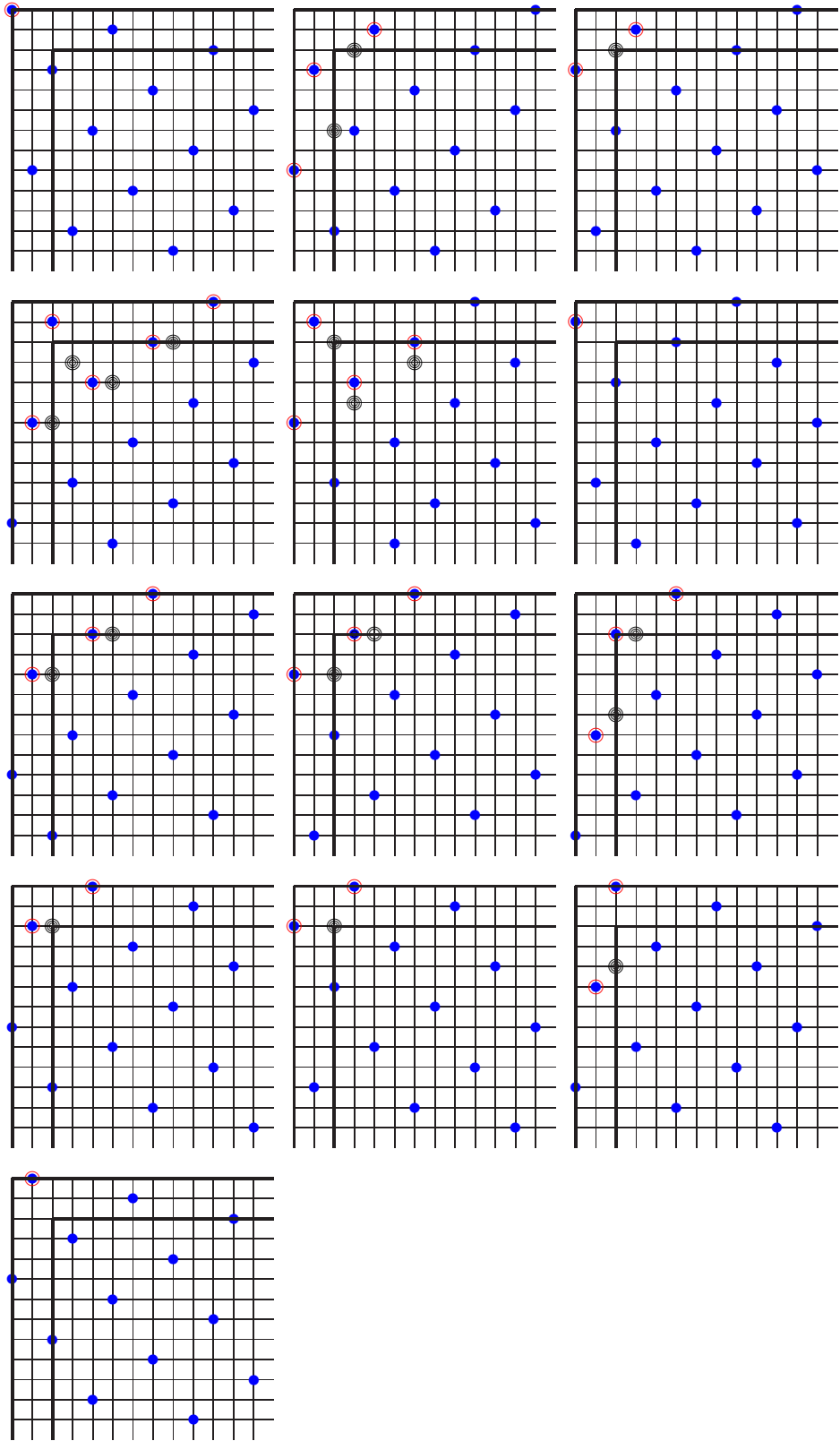

Taking the set obtained in Lemma \ref{lemma:rearrange}, we can obtain a minimal dominating subset $D_{m,n} \subset G_{m,n}$ by moving any vertex in $P(i) \cap Y_{m,n}$ to its nearest neighbor in $G_{m,n}$.
Since we can always remove four vertices from the set $P(i) \cap Y_{m,n}$ to obtain $D_{m,n}$, we wish to find the minimum number of vertices in $P(i) \cap Y_{m,n}$ for $i \in \ZZ_{13}$.  
Using the fact that $Y_{m,n} \cong G_{m+4, n+4}$ the following lemma does precisely that. 
\begin{lemma} \label{lemma:domin}
Let $i \in \ZZ_{13}$ and let  $P(i)= \phi^{-1}(i)$ denote the preimage of $i$ under the homomorphism $\phi: \ZZ \times \ZZ \rightarrow \ZZ_{13}$ defined by $\phi(x,y) = 4x+7y$. 
By the Euclidean algorithm, there exist integers $a$ and $b$ such that $m = 13a + \overline{m}$ and $n= 13b + \overline{n}$, where $\overline{m}$ and $\overline{n}$ are the residues of $m$ and $n$ modulo 13.
Let \[\Phi=\left  \{  \begin{matrix} (2, 7), (3, 9), (4, 4), (4, 7), (4, 10), (6, 7), (6, 9), (6, 11),\\ (7, 2), (7, 4), (7, 6), (9, 3), (9, 6), (9, 9), (10, 4), (11, 6)                   
                     \end{matrix}
\right  \} .\] 
Define \[\delta_\Phi= \begin{cases} 1 &  (\overline{m},\overline{n}) \in \Phi\\ 0 & \text{ else } \end{cases}. \]
Then \[ \min_{0\leq i \leq 12} | G_{m,n} \cap P(i) | =  \left \lfloor \frac{mn}{13}  \right \rfloor - \delta_\Phi \] 
\end{lemma}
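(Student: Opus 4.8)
The plan is to adapt the region-decomposition argument of Lemma~\ref{lemma:stones}, replacing the modulus $3$ by $13$, so as to isolate all of the $i$-dependence of $|G_{m,n}\cap P(i)|$ in a single $\overline m\times\overline n$ corner. Writing $m=13a+\overline m$ and $n=13b+\overline n$ as in the statement, I would partition $G_{m,n}$ into four rectangular regions $R_1,R_2,R_3,R_4$ of dimensions $13a\times 13b$, $13a\times\overline n$, $\overline m\times 13b$, and $\overline m\times\overline n$, exactly as in Figure~\ref{fourparts1}. Using the fact recorded above that every thirteen consecutive vertices in a row or a column meet $P(i)$ in exactly one point, I get $|R_1\cap P(i)|=13ab$ (the region is $ab$ disjoint $13\times 13$ blocks, each meeting $P(i)$ in $13$ points), while $|R_2\cap P(i)|=a\overline n$ and $|R_3\cap P(i)|=b\overline m$ (each column of $R_2$ is $a$ full vertical periods and each row of $R_3$ is $b$ full horizontal periods). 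All three of these counts are independent of $i$, so
\[ |G_{m,n}\cap P(i)| = 13ab + a\overline n + b\overline m + |R_4\cap P(i)|. \]

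Since $mn/13 = 13ab+a\overline n+b\overline m+\overline m\,\overline n/13$ and the first three summands are integers, $\lfloor mn/13\rfloor = 13ab+a\overline n+b\overline m+\lfloor\overline m\,\overline n/13\rfloor$. Subtracting, it suffices to show that
\[ \min_{0\le i\le 12}|R_4\cap P(i)| = \left\lfloor\frac{\overline m\,\overline n}{13}\right\rfloor - \delta_\Phi. \]
Two elementary observations govern this quantity. First, every vertex of $R_4$ lies in exactly one $P(i)$, so the thirteen numbers $|R_4\cap P(i)|$ are nonnegative integers summing to $\overline m\,\overline n$; their minimum is therefore at most the integer part of their average, which gives $\min_i|R_4\cap P(i)|\le\lfloor\overline m\,\overline n/13\rfloor$ unconditionally, i.e.\ $\delta_\Phi\ge 0$ in every case. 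Second, since $\phi(x+s,y+t)=\phi(x,y)+4s+7t$, translating $R_4$ by $(s,t)$ carries $P(i)$ to $P(i-4s-7t)$; hence $\min_i|R_4\cap P(i)|$ is translation-invariant and depends only on the shape $(\overline m,\overline n)$, not on where the corner sits in the embedded grid.

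It then remains to determine, for each of the $169$ shapes $(\overline m,\overline n)$ with $0\le\overline m,\overline n\le 12$, whether this minimum equals $\lfloor\overline m\,\overline n/13\rfloor$ or one less. Setting $S_{\overline n}=\{\,4x\bmod 13 : 0\le x<\overline n\,\}$, a row-by-row count (for each $y$ there is exactly one admissible $x$, occurring iff $i-7y\in S_{\overline n}$) yields
\[ |R_4\cap P(i)| = \#\{\,0\le y<\overline m : (i-7y)\bmod 13\in S_{\overline n}\,\}, \]
so $|R_4\cap P(i)|$ simply counts how many of the $\overline m$ shifted copies $7y+S_{\overline n}$ contain $i$. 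I would verify the minimum of this expression over $i$ for all $169$ shapes against the table $\Phi$; the symmetry of $\Phi$ under interchange of its two coordinates halves the labor, reflecting the fact that transposing $R_4$ swaps the roles of $4$ and $7$ in $\phi$. This exhaustive verification — carried out by the accompanying SAGE computation, in the same spirit as the $27$-case check in Lemma~\ref{lemma:stones} — is the one laborious step and the main obstacle: the averaging bound only forces the deficiency $\delta_\Phi$ to be at least $0$, and there is no short a priori reason it cannot exceed $1$, so the case analysis is genuinely needed both to show the deficiency never exceeds $1$ and to identify the exceptional shapes as precisely those in $\Phi$. Substituting the resulting value $\lfloor\overline m\,\overline n/13\rfloor-\delta_\Phi$ back into the displayed expression for $|G_{m,n}\cap P(i)|$ gives $\min_i|G_{m,n}\cap P(i)|=\lfloor mn/13\rfloor-\delta_\Phi$, as claimed.
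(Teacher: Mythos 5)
Your proposal is correct and takes essentially the same route as the paper: the identical four-region decomposition of $G_{m,n}$, the same $i$-independent counts $13ab$, $a\overline{n}$, $b\overline{m}$ for $R_1,R_2,R_3$, and a concluding exhaustive computer verification of $\min_{0\le i\le 12}|R_4\cap P(i)|$ over the residue classes (the paper checks $13^3$ triples in SAGE and, like you, observes that the case analysis is where the content of $\Phi$ lies). Your supplementary observations — the averaging bound $\min_i|R_4\cap P(i)|\le\lfloor\overline{m}\,\overline{n}/13\rfloor$, the translation invariance (which the paper uses implicitly when applying the lemma to $Y_{m,n}\cong G_{m+4,n+4}$), and the row-by-row formula reducing the check to $169$ shapes with the transposition symmetry of $\Phi$ — are sound refinements of the same argument rather than a different one, and in fact tidy up the paper's slightly loose step of writing the minimum of the sum as a sum of minima.
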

\begin{proof}
The set $P(0) = \phi^{-1}(0)$ is the subset $\ZZ \times \ZZ $ consisting of all integer multiples of $(1,5)$ and $(3,2)$. 
To obtain each of the sets $P(i)$ where $0 < i \leq 12$ shift the set $P(0)$ horizontally by $r$ units where $ 0 < r \leq 12$.  
To count the number of vertices in each $G_{m,n} \cap P(i)$ we break the $m \times n$ grid $G_{m,n}$ into four regions: $R_1, R_2, R_3,$ and $R_4$ with dimensions 
$13a \times 13b, 13a \times \overline{n} , \overline{m} \times 13b $ , and $\overline{m} \times \overline{n}$ respectively,
 as depicted in Figure \ref{fourparts}.  

\begin{figure}[h]
\begin{picture}(130,130)(0,0)
	\linethickness{0.2mm}
	\put(100,0){\line(0,1){130}}
	\put(0,30){\line(1,0){130}}
        \put(45,90){$R_1$}
        \put(108,90){$R_2$}
        \put(45,10){$R_3$}
        \put(108,10){$R_4$}
	\linethickness{0.4mm}
	\put(0,0){\line(0,1){130}}
	\put(0,130){\line(1,0){130}}
	\put(130,0){\line(0,1){130}}
	\put(0,0){\line(1,0){130}}
\end{picture}
  \caption{Four regions of $G_{m,n}$} \label{fourparts}
\end{figure}
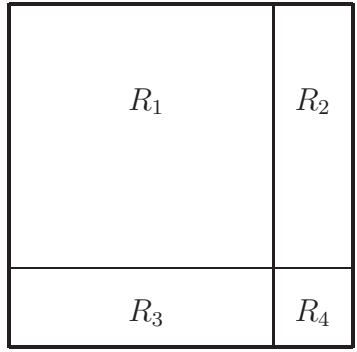

Since $\phi$ is a ring homomorphism from $\ZZ \times \ZZ $ to $\ZZ_{13}$, every thirteen consecutive vertices in a row or column of $G_{m,n} \subset \ZZ \times \ZZ$ 
will contain exactly one element of the preimage $P(i) = \phi^{-1}(i)$ for any $0 \leq i \leq 12$. 
Hence $R_1 \cap P(i)$ contains $\frac{(13a)(13b)}{13}= 13ab$ elements, 
and the $R_2 \cap P(i) $ and $R_3 \cap P(i)$ contain $a\overline{n}$ and $b \overline{m}$ elements of $P(i)$ respectively.

On the other hand, the number of vertices contained in $ R_4 \cap P(i)$ depends on the three quantities $i, \overline{ m},$ and $\overline{n}$.
There are $13^3$ cases to check, and we have written SAGE code to compute all of them.
\footnote{The code is available at the site \\ {  \tt https://cloud.sagemath.com/projects/26b983ae-a894-47c0-bd54-c73b071e555c/files/(3,1)BDC.sagews }}
The calculation shows that \[ |R_4 \cap P(i)| = \left \lfloor \frac{(\overline{m} \overline{n} )}{13}  \right \rfloor - 1 \] for the following values of $\overline{m}$ and $\overline{n}$
\[\Phi=\left  \{ \begin{matrix} (2, 7), (3, 9), (4, 4), (4, 7), (4, 10), (6, 7), (6, 9), (6, 11),\\ (7, 2), (7, 4), (7, 6), (9, 3), (9, 6), (9, 9), (10, 4), (11, 6)                   
                     \end{matrix}
\right  \} .\] 
Otherwise \[ |R_4 \cap P(i) | = \left \lfloor \frac{(\overline{m}\overline{n} )}{13}  \right \rfloor \] for all other values of $\overline{m}$ and $ \overline{n}$.

Hence the minimum number of elements in $P(i)$ as $i$ varies from $0$ to $12$ is 
\begin{align*} \min_{0 \leq i \leq 12} |P(i)| & = \min_{0 \leq i \leq 12} |R_1 \cap P(i) |+ \min_{0 \leq i \leq 12} |R_2 \cap P(i) | + \min_{0 \leq i \leq 12} 
|R_3 \cap P(i)|+ \min_{0 \leq i \leq 12} |R_4 \cap P(i)| \\
          & = 13ab+ a\overline{n} + b \overline{m} +  \left \lfloor \frac{(\overline{m}\overline{n})}{13}  \right \rfloor - \delta_\Phi   \\
          & = \left \lfloor \frac{( 13a+\overline{m})(13b + \overline{n} )}{13}  \right \rfloor - \delta_\Phi  \\ & = \left \lfloor \frac{(mn)}{13}  \right \rfloor - \delta_\Phi \end{align*}
\end{proof}

Using the construction of $D_{m,n}$ and the cardinality count in Lemma \ref{lemma:domin} we can obtain the upper bound on the $(3,1)$  broadcast domination number of $G_{m,n}$.  
\begin{theorem}
 For any sufficiently large $m$ and $n$, the $(3,1)$ broadcast domination number satisfies \[ \gamma_{3,1}(G_{m,n}) \leq \left \lfloor \frac{(m+4)(n+4)}{13} \right \rfloor - 5 \]
when $(\overline{m+4},\overline{n+4})$ are in $\Phi$ where
\[\Phi=\left  \{  \begin{matrix} (2, 7), (3, 9), (4, 4), (4, 7), (4, 10), (6, 7), (6, 9), (6, 11),\\ (7, 2), (7, 4), (7, 6), (9, 3), (9, 6), (9, 9), (10, 4), (11, 6)                   
                    \end{matrix} \right  \}, \] 
and 
 \[ \gamma_{3,1}(G_{m,n}) \leq \left \lfloor \frac{(m+4)(n+4)}{13} \right \rfloor - 4 \]
otherwise.
\end{theorem}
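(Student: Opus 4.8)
The plan is to exhibit, for a well-chosen residue $i \in \ZZ_{13}$, a dominating set $D_{m,n} \subseteq G_{m,n}$ whose size realizes the claimed bound, and then conclude via $\gamma_{3,1}(G_{m,n}) \le |D_{m,n}|$. First I would record the domination fact underlying the entire construction: for every $i$, the set $P(i) \cap Y_{m,n}$ already $(3,1)$-dominates $G_{m,n}$. Indeed, in $(3,1)$ broadcast domination a vertex is dominated exactly when it lies within distance $2$ of a broadcasting vertex, and $P(i)$ dominates all of $\ZZ \times \ZZ$; since the border $Y_{m,n} \setminus G_{m,n}$ is two rows and two columns thick on every side, every vertex of $\ZZ \times \ZZ$ within distance $2$ of $G_{m,n}$ already lies in $Y_{m,n}$. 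Hence all vertices of $P(i)$ that are needed to dominate $G_{m,n}$ are captured by the intersection with $Y_{m,n}$.

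Next I would choose $i$ to minimize $\left| P(i) \cap Y_{m,n} \right|$ and evaluate this minimum using that $Y_{m,n} \cong G_{m+4,n+4}$. The one point needing care is that Lemma \ref{lemma:domin} is stated for the grid in its standard embedding, whereas $Y_{m,n}$ sits in a translated position. This is harmless: for any translation vector $v$ one has $P(i)+v = P(i+\phi(v))$ because $\phi$ is a homomorphism, so translating a region merely permutes the fibers $P(i)$; consequently $\min_{0 \le i \le 12}\left| P(i) \cap S \right|$ is unchanged when $S$ is translated and depends only on the shape of $S$. Applying Lemma \ref{lemma:domin} with $m,n$ replaced by $m+4,n+4$ then gives
\[ \min_{0 \le i \le 12} \left| P(i) \cap Y_{m,n} \right| = \left\lfloor \frac{(m+4)(n+4)}{13} \right\rfloor - \delta, \]
where $\delta = 1$ if $(\overline{m+4},\overline{n+4}) \in \Phi$ and $\delta = 0$ otherwise.

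Fixing a minimizing index $i$, I would then prune the corners. Lemma \ref{lemma:rearrange} allows deleting one vertex from each of the four corners of $P(i) \cap Y_{m,n}$ while preserving domination of $G_{m,n}$; the thirteen corner configurations treated there cover every residue $i$, so the deletion is available for whichever $i$ attains the minimum. To ensure the four corner modifications are independent, I need $m$ and $n$ large enough that the corner regions (each occupying roughly a $13 \times 13$ block, as in Figure \ref{delete}) are pairwise disjoint; this is precisely the ``sufficiently large'' hypothesis, the $(3,1)$-analogue of the $m,n \ge 6$ requirement in the $(2,2)$ case. Having removed four vertices, I finally push each surviving vertex of $Y_{m,n} \setminus G_{m,n}$ to its nearest neighbor inside $G_{m,n}$, exactly as in the construction of $D_{m,n}$ preceding Lemma \ref{lemma:domin}. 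This produces a dominating set $D_{m,n} \subseteq G_{m,n}$ with $|D_{m,n}| \le \left\lfloor (m+4)(n+4)/13 \right\rfloor - \delta - 4$, since the push-in step can only decrease the count. Thus $\gamma_{3,1}(G_{m,n}) \le |D_{m,n}|$ yields $\left\lfloor (m+4)(n+4)/13 \right\rfloor - 5$ when $(\overline{m+4},\overline{n+4}) \in \Phi$ and $\left\lfloor (m+4)(n+4)/13 \right\rfloor - 4$ otherwise.

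The main obstacle I expect is making ``sufficiently large'' quantitative and fully rigorous: one must verify that the corner-deletion regions of Lemma \ref{lemma:rearrange} and the inward translations of the boundary vertices never interact, and confirm that the minimizing fiber $P(i)$ is simultaneously compatible with deletion at all four corners. Pinning down an explicit threshold on $m,n$ that guarantees disjointness of the four corner blocks, and checking that pushing the boundary vertices inward genuinely leaves a dominating set, is the part that takes real care; the cardinality bookkeeping then follows immediately from Lemmas \ref{lemma:rearrange} and \ref{lemma:domin}.
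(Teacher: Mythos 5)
Your proposal is correct and follows essentially the same route as the paper: apply Lemma \ref{lemma:domin} to $Y_{m,n} \cong G_{m+4,n+4}$ to find a minimizing fiber $P(i')$, delete one vertex per corner via Lemma \ref{lemma:rearrange}, and push the remaining boundary vertices into $G_{m,n}$ to obtain $D_{m,n}$ with $|D_{m,n}| \le \left\lfloor (m+4)(n+4)/13 \right\rfloor - \delta_\Phi - 4$. In fact you make explicit several points the paper's two-line proof leaves implicit (translation invariance of the fiber counts, why $P(i) \cap Y_{m,n}$ dominates, and why the push-in step preserves domination and does not increase cardinality), which strengthens rather than changes the argument.
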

\begin{proof}
Lemma \ref{lemma:domin} states that for each $(m,n) \in \ZZ \times \ZZ$ there exists an $i' \in \ZZ_{13}$ such that 
\[ |Y_{m,n} \cap P(i')| =  \left \lfloor \frac{(m+4)(n+4)}{13} \right \rfloor - \delta_{\Phi} .\] 
The dominating set $D_{m,n}$ was constructed so that $|D_{m,n}|= |Y_{m,n} \cap P(i') |-4$.  \end{proof}

\subsection{Upper Bound for (3,2) Broadcast Domination}
In this section we describe a dominating set that gives an upper bound for $(3,2)$  broadcast domination numbers of $m \times n$ grids.  We then count the number of elements in this set.
This gives an upper bound on the $\gamma_{3,2}(G_{m,n})$ which we conjecture is tight for sufficiently large $m$ and $n$.  

Define a function $\phi: \ZZ \times \ZZ \rightarrow \ZZ_ 8 $ by $\phi(x,y) = x+ 3y$.  For each $i \in \ZZ_8$, the inverse image $P(i) = \phi^{-1}(i)$ is an optimal dominating set 
of $\ZZ \times \ZZ$ in the sense that every element of $\ZZ \times \ZZ$ is dominated with a weight of $3$ or $2$.  In fact, every 8 vertices in a row or column will have weights 
3-2-3-2-3-2-3-2.  The next most efficient dominating set produces weights 3-2-4-2-3-2-4-2.  
Figure \ref{32P} below shows $P(0)$.

\begin{figure}[h]
\begin{picture}(210,200)(100,0)
	\linethickness{.1 mm}
	\multiput(110,10)(0,10){19}{\line(1,0){200}}
	\multiput(120,0)(10,0){19}{\line(0,1){200}}
        \put(210,100){\color{blue}{\circle*{5}}}
        \put(230,120){\color{blue}{\circle*{5}}}
        \put(190,80){\color{blue}{\circle*{5}}}
        \put(170,60){\color{blue}{\circle*{5}}}
        \put(150,40){\color{blue}{\circle*{5}}}
        \put(250,140){\color{blue}{\circle*{5}}}
        \put(270,160){\color{blue}{\circle*{5}}}
        \put(290,180){\color{blue}{\circle*{5}}}
        \put(130,20){\color{blue}{\circle*{5}}}

        \put(200,130){\color{blue}{\circle*{5}}}
        \put(220,150){\color{blue}{\circle*{5}}}
        \put(180,110){\color{blue}{\circle*{5}}}
        \put(160,90){\color{blue}{\circle*{5}}}
        \put(140,70){\color{blue}{\circle*{5}}}
        \put(240,170){\color{blue}{\circle*{5}}}
        \put(260,190){\color{blue}{\circle*{5}}}
        \put(120,50){\color{blue}{\circle*{5}}}

        \put(190,160){\color{blue}{\circle*{5}}}
        \put(210,180){\color{blue}{\circle*{5}}}
        \put(170,140){\color{blue}{\circle*{5}}}
        \put(150,120){\color{blue}{\circle*{5}}}
        \put(130,100){\color{blue}{\circle*{5}}}
       
        \put(180,190){\color{blue}{\circle*{5}}}
        \put(160,170){\color{blue}{\circle*{5}}}
        \put(140,150){\color{blue}{\circle*{5}}}
        \put(120,130){\color{blue}{\circle*{5}}}

        \put(130,180){\color{blue}{\circle*{5}}}

        \put(220,70){\color{blue}{\circle*{5}}}
        \put(240,90){\color{blue}{\circle*{5}}}
        \put(200,50){\color{blue}{\circle*{5}}}
        \put(180,30){\color{blue}{\circle*{5}}}
        \put(160,10){\color{blue}{\circle*{5}}}
        \put(260,110){\color{blue}{\circle*{5}}}
        \put(280,130){\color{blue}{\circle*{5}}}
        \put(300,150){\color{blue}{\circle*{5}}}

        \put(230,40){\color{blue}{\circle*{5}}}
        \put(250,60){\color{blue}{\circle*{5}}}
        \put(210,20){\color{blue}{\circle*{5}}}
        \put(270,80){\color{blue}{\circle*{5}}}
        \put(290,100){\color{blue}{\circle*{5}}}
        
        \put(240,10){\color{blue}{\circle*{5}}}
        \put(260,30){\color{blue}{\circle*{5}}}
        \put(280,50){\color{blue}{\circle*{5}}}
        \put(300,70){\color{blue}{\circle*{5}}}
         
        \put(290,20){\color{blue}{\circle*{5}}}

	\linethickness{0.4mm}
	\put(210,0){\line(0,1){200}}
	\put(110,100){\line(1,0){200}}
\end{picture}
\caption{ The optimal dominating set $P(0)$ of $\ZZ \times \ZZ$  } \label{32P}
\end{figure}
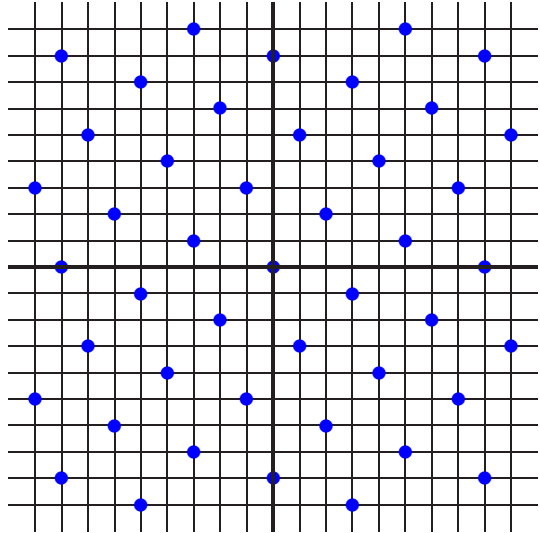

Embed $G_{m,n}$ into $\ZZ \times \ZZ$ as the following set: \[ G_{m,n} = \{ (a,b) \in \ZZ \times \ZZ  \mid 1 \leq a \leq n  \text{ and } 1 \leq b \leq m \}, \]
and let $Y_{m,n} \cong G_{m+2,n+2} $ denote the neighborhood of $G_{m,n}$: \[Y_{m,n} = \{ (a,b) \in \ZZ \times \ZZ \mid 0 \leq a \leq n+1 \text{ and }  0 \leq b \leq m+1 \}. \]

Unlike our previously studied examples of $(\ww, \tw)$  broadcast domination theories, we cannot always remove vertices from the corners of $P(i) \cap Y_{m,n}$ and still dominate $G_{m,n}$.  
There are eight possible configurations of $P(i) \cap Y_{m,n}$.  However, only three 
of these configurations allow us to remove a vertex from the set $P(i) \cap Y_{m,n}$ and still dominate the grid $G_{m,n}$.
Figures \ref{SWNE} and \ref{NWSE} show each of the eight configurations.  
When possible, they also show how to remove one vertex by replacing the vertices circled in red with the vertices decorated by a bullseye.

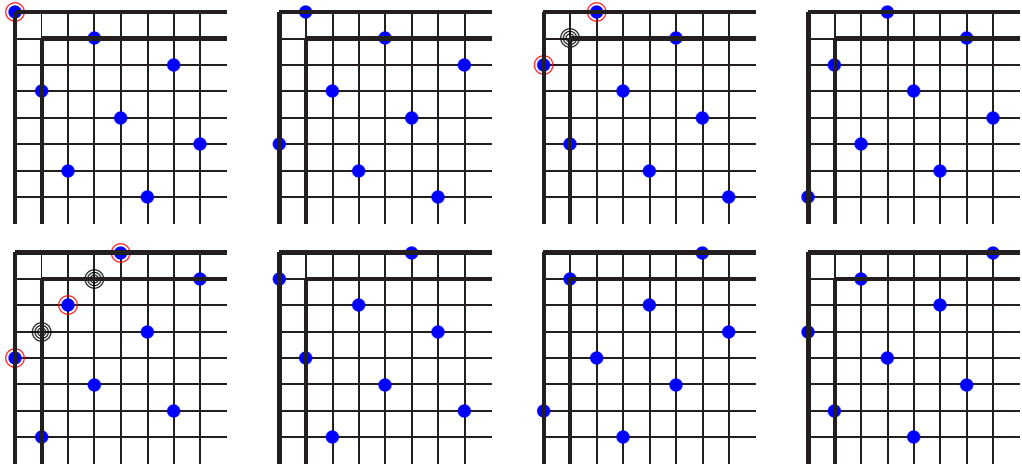
\begin{figure}[H]
\begin{picture}(330,90)(25,0)
	\linethickness{.1 mm}
	\multiput(10,10)(0,10){8}{\line(1,0){80}}
	\multiput(10,0)(10,0){8}{\line(0,1){80}}
	\put(10,80){\color{blue}{\circle*{5}}}
        \put(10,80){\color{red}{\circle{7}}}
	\put(20,50){\color{blue}{\circle*{5}}}
        \put(40,70){\color{blue}{\circle*{5}}}
	\put(30,20){\color{blue}{\circle*{5}}}
	\put(50,40){\color{blue}{\circle*{5}}}
	\put(60,10){\color{blue}{\circle*{5}}}
	\put(80,30){\color{blue}{\circle*{5}}}
        \put(70,60){\color{blue}{\circle*{5}}}
	\linethickness{0.4mm}
	\put(10,0){\line(0,1){80}}
	\put(10,80){\line(1,0){80}}
	\put(20,0){\line(0,1){70}}
	\put(20,70){\line(1,0){70}}

	\linethickness{.1 mm}
	\multiput(110,10)(0,10){8}{\line(1,0){80}}
	\multiput(110,0)(10,0){8}{\line(0,1){80}}
	\put(120,80){\color{blue}{\circle*{5}}}
	\put(130,50){\color{blue}{\circle*{5}}}
        \put(150,70){\color{blue}{\circle*{5}}}
	\put(140,20){\color{blue}{\circle*{5}}}
	\put(160,40){\color{blue}{\circle*{5}}}
	\put(170,10){\color{blue}{\circle*{5}}}
	\put(110,30){\color{blue}{\circle*{5}}}
        \put(180,60){\color{blue}{\circle*{5}}}
	\linethickness{0.4mm}
	\put(110,0){\line(0,1){80}}
	\put(110,80){\line(1,0){80}}
	\put(120,0){\line(0,1){70}}
	\put(120,70){\line(1,0){70}}

	\linethickness{.1 mm}
	\multiput(210,10)(0,10){8}{\line(1,0){80}}
	\multiput(210,0)(10,0){8}{\line(0,1){80}}
	\put(230,80){\color{blue}{\circle*{5}}}
	\put(240,50){\color{blue}{\circle*{5}}}
        \put(260,70){\color{blue}{\circle*{5}}}
	\put(250,20){\color{blue}{\circle*{5}}}
	\put(270,40){\color{blue}{\circle*{5}}}
	\put(280,10){\color{blue}{\circle*{5}}}
	\put(220,30){\color{blue}{\circle*{5}}}
        \put(220,70){\color{black}{\circle{5}}}
        \put(220,70){\color{black}{\circle{3}}}
        \put(220,70){\color{black}{\circle{7}}}
        \put(210,60){\color{blue}{\circle*{5}}}
        \put(210,60){\color{red}{\circle{7}}}
	\put(230,80){\color{red}{\circle{7}}}
	\linethickness{0.4mm}
	\put(210,0){\line(0,1){80}}
	\put(210,80){\line(1,0){80}}
	\put(220,0){\line(0,1){70}}
	\put(220,70){\line(1,0){70}}

	\linethickness{.1 mm}
	\multiput(310,10)(0,10){8}{\line(1,0){80}}
	\multiput(310,0)(10,0){8}{\line(0,1){80}}
	\put(340,80){\color{blue}{\circle*{5}}}
	\put(350,50){\color{blue}{\circle*{5}}}
        \put(370,70){\color{blue}{\circle*{5}}}
	\put(360,20){\color{blue}{\circle*{5}}}
	\put(380,40){\color{blue}{\circle*{5}}}
	\put(310,10){\color{blue}{\circle*{5}}}
	\put(330,30){\color{blue}{\circle*{5}}}
        \put(320,60){\color{blue}{\circle*{5}}}
	\linethickness{0.4mm}
	\put(310,0){\line(0,1){80}}
	\put(310,80){\line(1,0){80}}
	\put(320,0){\line(0,1){70}}
	\put(320,70){\line(1,0){70}}

\end{picture}

\begin{picture}(330,90)(25,0)
	\linethickness{.1 mm}
	\multiput(10,10)(0,10){8}{\line(1,0){80}}
	\multiput(10,0)(10,0){8}{\line(0,1){80}}
	\put(50,80){\color{blue}{\circle*{5}}}
        \put(50,80){\color{red}{\circle{7}}}
	\put(60,50){\color{blue}{\circle*{5}}}
        \put(80,70){\color{blue}{\circle*{5}}}
	\put(70,20){\color{blue}{\circle*{5}}}
	\put(10,40){\color{blue}{\circle*{5}}}
	\put(10,40){\color{red}{\circle{7}}}
	\put(20,10){\color{blue}{\circle*{5}}}
	\put(40,30){\color{blue}{\circle*{5}}}
        \put(30,60){\color{blue}{\circle*{5}}}
        \put(30,60){\color{red}{\circle{7}}}
        \put(40,70){\color{black}{\circle{7}}}
        \put(40,70){\color{black}{\circle{5}}}
        \put(40,70){\color{black}{\circle{3}}}
        \put(20,50){\color{black}{\circle{7}}}
        \put(20,50){\color{black}{\circle{5}}}
        \put(20,50){\color{black}{\circle{3}}}
	\linethickness{0.4mm}
	\put(10,0){\line(0,1){80}}
	\put(10,80){\line(1,0){80}}
	\put(20,0){\line(0,1){70}}
	\put(20,70){\line(1,0){70}}

	\linethickness{.1 mm}
	\multiput(110,10)(0,10){8}{\line(1,0){80}}
	\multiput(110,0)(10,0){8}{\line(0,1){80}}
	\put(160,80){\color{blue}{\circle*{5}}}
	\put(170,50){\color{blue}{\circle*{5}}}
        \put(110,70){\color{blue}{\circle*{5}}}
	\put(180,20){\color{blue}{\circle*{5}}}
	\put(120,40){\color{blue}{\circle*{5}}}
	\put(130,10){\color{blue}{\circle*{5}}}
	\put(150,30){\color{blue}{\circle*{5}}}
        \put(140,60){\color{blue}{\circle*{5}}}
	\linethickness{0.4mm}
	\put(110,0){\line(0,1){80}}
	\put(110,80){\line(1,0){80}}
	\put(120,0){\line(0,1){70}}
	\put(120,70){\line(1,0){70}}

	\linethickness{.1 mm}
	\multiput(210,10)(0,10){8}{\line(1,0){80}}
	\multiput(210,0)(10,0){8}{\line(0,1){80}}
	\put(270,80){\color{blue}{\circle*{5}}}
	\put(280,50){\color{blue}{\circle*{5}}}
        \put(220,70){\color{blue}{\circle*{5}}}
	\put(210,20){\color{blue}{\circle*{5}}}
	\put(230,40){\color{blue}{\circle*{5}}}
	\put(240,10){\color{blue}{\circle*{5}}}
	\put(260,30){\color{blue}{\circle*{5}}}
        \put(250,60){\color{blue}{\circle*{5}}}
	\linethickness{0.4mm}
	\put(210,0){\line(0,1){80}}
	\put(210,80){\line(1,0){80}}
	\put(220,0){\line(0,1){70}}
	\put(220,70){\line(1,0){70}}

	\linethickness{.1 mm}
	\multiput(310,10)(0,10){8}{\line(1,0){80}}
	\multiput(310,0)(10,0){8}{\line(0,1){80}}
	\put(380,80){\color{blue}{\circle*{5}}}
	\put(310,50){\color{blue}{\circle*{5}}}
        \put(330,70){\color{blue}{\circle*{5}}}
	\put(320,20){\color{blue}{\circle*{5}}}
	\put(340,40){\color{blue}{\circle*{5}}}
	\put(350,10){\color{blue}{\circle*{5}}}
	\put(370,30){\color{blue}{\circle*{5}}}
        \put(360,60){\color{blue}{\circle*{5}}}
	\linethickness{0.4mm}
	\put(310,0){\line(0,1){80}}
	\put(310,80){\line(1,0){80}}
	\put(320,0){\line(0,1){70}}
	\put(320,70){\line(1,0){70}}

\end{picture}
  \caption{Cases for the SW and NE Corner} \label{SWNE}
\end{figure}

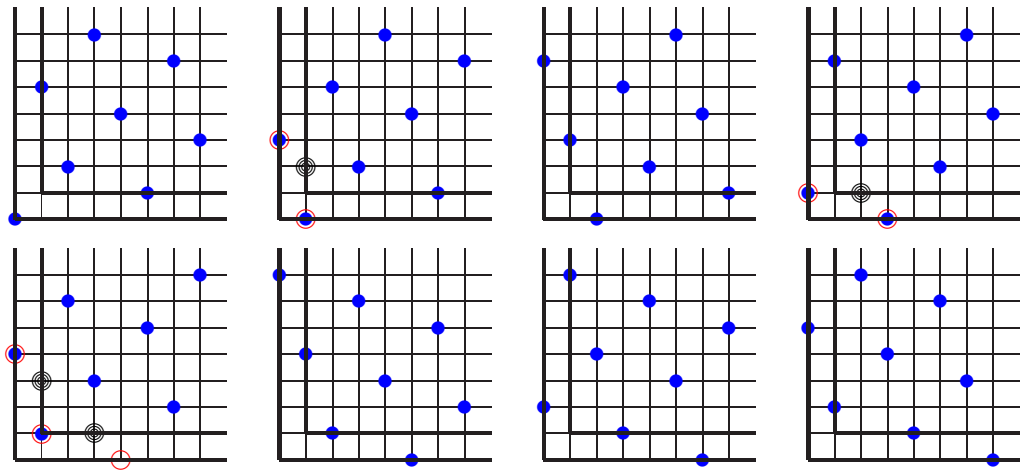
\begin{figure}[H]

\begin{picture}(330,90)(25,0)
	\linethickness{.1 mm}
	\multiput(10,0)(0,10){8}{\line(1,0){80}}
	\multiput(10,0)(10,0){8}{\line(0,1){80}}
	\put(10,0){\color{blue}{\circle*{5}}}
	\put(20,50){\color{blue}{\circle*{5}}}
        \put(40,70){\color{blue}{\circle*{5}}}
	\put(30,20){\color{blue}{\circle*{5}}}
	\put(50,40){\color{blue}{\circle*{5}}}
	\put(60,10){\color{blue}{\circle*{5}}}
	\put(80,30){\color{blue}{\circle*{5}}}
        \put(70,60){\color{blue}{\circle*{5}}}
	\linethickness{0.4mm}
	\put(10,0){\line(0,1){80}}
	\put(10,0){\line(1,0){80}}
	\put(20,10){\line(0,1){70}}
	\put(20,10){\line(1,0){70}}

	\linethickness{.1 mm}
	\multiput(110,0)(0,10){8}{\line(1,0){80}}
	\multiput(110,0)(10,0){8}{\line(0,1){80}}
	\put(120,0){\color{blue}{\circle*{5}}}
        \put(120,0){\color{red}{\circle{7}}}
        \put(120,20){\color{black}{\circle{7}}}
        \put(120,20){\color{black}{\circle{3}}}
        \put(120,20){\color{black}{\circle{5}}}
	\put(130,50){\color{blue}{\circle*{5}}}
        \put(150,70){\color{blue}{\circle*{5}}}
	\put(140,20){\color{blue}{\circle*{5}}}
	\put(160,40){\color{blue}{\circle*{5}}}
	\put(170,10){\color{blue}{\circle*{5}}}
	\put(110,30){\color{blue}{\circle*{5}}}
        \put(110,30){\color{red}{\circle{7}}}
        \put(180,60){\color{blue}{\circle*{5}}}
	\linethickness{0.4mm}
	\put(110,0){\line(0,1){80}}
	\put(110,0){\line(1,0){80}}
	\put(120,10){\line(0,1){70}}
	\put(120,10){\line(1,0){70}}

	\linethickness{.1 mm}
	\multiput(210,0)(0,10){8}{\line(1,0){80}}
	\multiput(210,0)(10,0){8}{\line(0,1){80}}
	\put(230,0){\color{blue}{\circle*{5}}}
	\put(240,50){\color{blue}{\circle*{5}}}
        \put(260,70){\color{blue}{\circle*{5}}}
	\put(250,20){\color{blue}{\circle*{5}}}
	\put(270,40){\color{blue}{\circle*{5}}}
	\put(280,10){\color{blue}{\circle*{5}}}
	\put(220,30){\color{blue}{\circle*{5}}}
        \put(210,60){\color{blue}{\circle*{5}}}
	\linethickness{0.4mm}
	\put(210,0){\line(0,1){80}}
	\put(210,0){\line(1,0){80}}
	\put(220,10){\line(0,1){70}}
	\put(220,10){\line(1,0){70}}

	\linethickness{.1 mm}
	\multiput(310,0)(0,10){8}{\line(1,0){80}}
	\multiput(310,0)(10,0){8}{\line(0,1){80}}
	\put(340,0){\color{blue}{\circle*{5}}}
        \put(340,0){\color{red}{\circle{7}}}
	\put(350,50){\color{blue}{\circle*{5}}}
        \put(370,70){\color{blue}{\circle*{5}}}
	\put(360,20){\color{blue}{\circle*{5}}}
	\put(380,40){\color{blue}{\circle*{5}}}
	\put(310,10){\color{blue}{\circle*{5}}}
        \put(310,10){\color{red}{\circle{7}}}
        \put(330,10){\color{black}{\circle{7}}}
	\put(330,10){\color{black}{\circle{5}}}
	\put(330,10){\color{black}{\circle{3}}}
	\put(330,30){\color{blue}{\circle*{5}}}
        \put(320,60){\color{blue}{\circle*{5}}}
	\linethickness{0.4mm}
	\put(310,0){\line(0,1){80}}
	\put(310,0){\line(1,0){80}}
	\put(320,10){\line(0,1){70}}
	\put(320,10){\line(1,0){70}}

\end{picture}

\begin{picture}(330,90)(25,0)
	\linethickness{.1 mm}
	\multiput(10,0)(0,10){8}{\line(1,0){80}}
	\multiput(10,0)(10,0){8}{\line(0,1){80}}
        \put(50,0){\color{red}{\circle{7}}}
	\put(60,50){\color{blue}{\circle*{5}}}
        \put(80,70){\color{blue}{\circle*{5}}}
	\put(70,20){\color{blue}{\circle*{5}}}
	\put(10,40){\color{blue}{\circle*{5}}}
        \put(10,40){\color{red}{\circle{7}}}
	\put(20,10){\color{blue}{\circle*{5}}}
        \put(20,10){\color{red}{\circle{7}}}
        \put(20,30){\color{black}{\circle{7}}}
        \put(20,30){\color{black}{\circle{5}}}
        \put(20,30){\color{black}{\circle{3}}}
        \put(40,10){\color{black}{\circle{3}}}
        \put(40,10){\color{black}{\circle{7}}}
        \put(40,10){\color{black}{\circle{5}}}
        
	\put(40,30){\color{blue}{\circle*{5}}}
        \put(30,60){\color{blue}{\circle*{5}}}
	\linethickness{0.4mm}
	\put(10,0){\line(0,1){80}}
	\put(10,0){\line(1,0){80}}
	\put(20,10){\line(0,1){70}}
	\put(20,10){\line(1,0){70}}

	\linethickness{.1 mm}
	\multiput(110,0)(0,10){8}{\line(1,0){80}}
	\multiput(110,0)(10,0){8}{\line(0,1){80}}
	\put(160,0){\color{blue}{\circle*{5}}}
	\put(170,50){\color{blue}{\circle*{5}}}
        \put(110,70){\color{blue}{\circle*{5}}}
	\put(180,20){\color{blue}{\circle*{5}}}
	\put(120,40){\color{blue}{\circle*{5}}}
	\put(130,10){\color{blue}{\circle*{5}}}
	\put(150,30){\color{blue}{\circle*{5}}}
        \put(140,60){\color{blue}{\circle*{5}}}
	\linethickness{0.4mm}
	\put(110,0){\line(0,1){80}}
	\put(110,0){\line(1,0){80}}
	\put(120,10){\line(0,1){70}}
	\put(120,10){\line(1,0){70}}

	\linethickness{.1 mm}
	\multiput(210,0)(0,10){8}{\line(1,0){80}}
	\multiput(210,0)(10,0){8}{\line(0,1){80}}
	\put(270,0){\color{blue}{\circle*{5}}}
	\put(280,50){\color{blue}{\circle*{5}}}
        \put(220,70){\color{blue}{\circle*{5}}}
	\put(210,20){\color{blue}{\circle*{5}}}
	\put(230,40){\color{blue}{\circle*{5}}}
	\put(240,10){\color{blue}{\circle*{5}}}
	\put(260,30){\color{blue}{\circle*{5}}}
        \put(250,60){\color{blue}{\circle*{5}}}
	\linethickness{0.4mm}
	\put(210,0){\line(0,1){80}}
	\put(210,0){\line(1,0){80}}
	\put(220,10){\line(0,1){70}}
	\put(220,10){\line(1,0){70}}

	\linethickness{.1 mm}
	\multiput(310,0)(0,10){8}{\line(1,0){80}}
	\multiput(310,0)(10,0){8}{\line(0,1){80}}
	\put(380,0){\color{blue}{\circle*{5}}}
	\put(310,50){\color{blue}{\circle*{5}}}
        \put(330,70){\color{blue}{\circle*{5}}}
	\put(320,20){\color{blue}{\circle*{5}}}
	\put(340,40){\color{blue}{\circle*{5}}}
	\put(350,10){\color{blue}{\circle*{5}}}
	\put(370,30){\color{blue}{\circle*{5}}}
        \put(360,60){\color{blue}{\circle*{5}}}
	\linethickness{0.4mm}
	\put(310,0){\line(0,1){80}}
	\put(310,0){\line(1,0){80}}
	\put(320,10){\line(0,1){70}}
	\put(320,10){\line(1,0){70}}

\end{picture}
  \caption{8 Cases for the NW and SE Corner} \label{NWSE}
\end{figure}

The following theorem gives an upper bound on the $(3,2)$ broadcast domination number of $G_{m,n}$.  
\begin{theorem} \label{32dominationbound}
 The $(3,2)$ broadcast domination number $\gamma_{3,2}(G_{m,n})$ of an $m \times n$ grid is bounded above by the following formula:

\[ \gamma_{3,2}(G_{m,n}) \leq \left \lfloor \frac{(m+2)(n+2)}{8} \right \rfloor - c_{\overline{m},\overline{n}} \]
where the constant $c_{\overline{m},\overline{n}}$ is defined by 
\[ c_{\overline{m},\overline{n}} = \begin{cases}1 &  \text{ if } (\overline{m}, \overline{n} ) \in A \\ 2 & \text{ if }  (\overline{m}, \overline{n} ) \in B \\ 3 & \text{ if }  (\overline{m}, \overline{n} ) \in C \end{cases} \] 
where the set $B$ is 
\[ B = \left \{  (0, 0), (0, 4), (0, 6), (2, 6), (4, 4), (4, 6), (4, 0), (6, 6), (6, 0), (6, 2), (6, 4) \right \} \]
the set $C$ is 
\[ C = \left \{  \right (0, 2), (2, 2), (2, 4), (2, 0), (4, 2) \} \]
and the set $A$ is all other possible residues $\overline{m} \equiv m \mod 8$ and $\overline{n} \equiv n \mod 8$.    
\end{theorem}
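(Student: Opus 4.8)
The plan is to follow the template established for the $(2,2)$ and $(3,1)$ upper bounds: first count $|P(i) \cap Y_{m,n}|$ exactly, then subtract the vertices that the corner analysis allows us to delete, and finally optimize the total over the choice of $i \in \ZZ_8$. For the cardinality count I would reuse the four-region decomposition of Lemmas \ref{lemma:stones} and \ref{lemma:domin}. Writing $m+2 = 8a + \overline{m+2}$ and $n+2 = 8b + \overline{n+2}$, I would split $Y_{m,n} \cong G_{m+2,n+2}$ into regions $R_1, R_2, R_3, R_4$ of dimensions $8a \times 8b$, $8a \times \overline{n+2}$, $\overline{m+2} \times 8b$, and $\overline{m+2} \times \overline{n+2}$. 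Because $\phi$ is a homomorphism onto $\ZZ_8$, every eight consecutive vertices in a row or column contain exactly one point of $P(i)$, so $R_1$, $R_2$, $R_3$ contribute $8ab$, $a\,\overline{n+2}$, and $b\,\overline{m+2}$ respectively, independent of $i$. Only the contribution of the small corner region $R_4$ depends on the triple $(\overline{m+2}, \overline{n+2}, i)$; a finite check of the $8^3$ possibilities (carried out in SAGE) shows that $|P(i) \cap Y_{m,n}|$ equals $\lfloor (m+2)(n+2)/8 \rfloor$ or $\lceil (m+2)(n+2)/8 \rceil$, and, exactly as in Lemma \ref{lemma:stones}, the governing condition can be rewritten from the residues $\overline{m+2},\overline{n+2}$ back to $\overline{m},\overline{n}$.

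Next I would account for the corner deletions recorded in Figures \ref{SWNE} and \ref{NWSE}. For fixed $i$, the residues $\overline{m}$ and $\overline{n}$ determine which of the eight SW/NE configurations and which of the eight NW/SE configurations appear at each of the four corners of $P(i) \cap Y_{m,n}$; exactly three configurations of each type permit replacing the red-circled vertices by a smaller bullseyed set, removing one vertex while still dominating that corner of $G_{m,n}$. Letting $d(i,\overline{m},\overline{n}) \in \{0,1,2,3,4\}$ be the number of corners admitting a deletion, the modified set dominates $G_{m,n}$, and after moving each vertex of $Y_{m,n}-G_{m,n}$ to its nearest neighbor in $G_{m,n}$ (as in the construction of $D_{m,n}$ in the earlier sections) it yields a dominating set of size $|P(i)\cap Y_{m,n}| - d(i,\overline{m},\overline{n})$.

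The bound then follows by minimizing this quantity over $i$. I would set
\[ \gamma_{3,2}(G_{m,n}) \leq \min_{0 \leq i \leq 7}\left( |P(i)\cap Y_{m,n}| - d(i,\overline{m},\overline{n}) \right) \]
and express the right side relative to $\lfloor (m+2)(n+2)/8 \rfloor$. Since both terms depend on $(\overline{m},\overline{n},i)$ only through the $R_4$ count and the four corner configurations, this is a finite optimization over the $64$ residue pairs; carrying it out in SAGE produces the constant $c_{\overline{m},\overline{n}}$ and sorts the residue pairs into the sets $A$, $B$, and $C$. As in the earlier theorems, I would require $m$ and $n$ sufficiently large so that the four corner regions of Figures \ref{SWNE} and \ref{NWSE} are pairwise disjoint and the deletions and the nearest-neighbor collapse do not interfere.

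The main obstacle is that, unlike the $(2,2)$ and $(3,1)$ cases where a deletion was available at every corner, here five of the eight corner configurations are rigid, so deletability is genuinely coupled to the choice of $i$. Consequently one cannot separately minimize the cardinality $|P(i)\cap Y_{m,n}|$ and maximize the number of deletions $d(i,\overline{m},\overline{n})$: the value of $i$ giving the smallest intersection need not be the one allowing the most corners to be trimmed. The real content of the theorem is therefore this joint optimization, and the irregular appearance of $B$ and $C$ reflects precisely the residue pairs for which a single $i$ happens to be favorable both for the $R_4$ count and for the corner geometry. Verifying that the modifications at distinct corners remain independent, and that a deletable corner configuration survives the collapse into $G_{m,n}$, is the one place where care beyond the finite computation is needed.
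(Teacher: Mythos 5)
Your proposal matches the paper's proof essentially step for step: the same four-region residue decomposition of $Y_{m,n} \cong G_{m+2,n+2}$ to count $|P(i) \cap Y_{m,n}|$, the same observation that the corner configurations depend only on $(\overline{m},\overline{n},i)$ with just three of the eight configurations at each corner type admitting a deletion, and the same SAGE-assisted joint minimization of $|P(i)\cap Y_{m,n}| - d(\overline{m},\overline{n},i)$ over the $8^3$ residue cases to produce $c_{\overline{m},\overline{n}}$ and the sets $A$, $B$, $C$. Your closing remark that the cardinality and the number of deletable corners cannot be optimized separately over $i$ is exactly why the paper computes the minimum of the combined quantity, so the proposal is correct and takes the paper's approach.
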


\begin{proof}
To find the minimum cardinality of any dominating set $D_{m,n}$, we use an algorithm written in SAGE
that computes the cardinality of $P(i) \cap Y_{ m,n}$.\footnote{The code is available at the site \\{
 \tt   https://cloud.sagemath.com/projects/26b983ae-a894-47c0-bd54-c73b071e555c/files/(3,2)BDC.sagews} }  As in the proofs of Lemmas \ref{lemma:stones} and \ref{lemma:domin} the cardinality of
 this set is determined by the values of the residues $\overline{m}, \overline{n}$, and $i \in \ZZ_8$.

We then note that when $m_1 \equiv m_2 \mod 8 $ and $n_1 \equiv n_2 \mod 8$ the corner configurations of the sets $P(i) \cap Y_{m_1, n_1}$ and $P(i) \cap Y_{m_2, n_2}$ are the same.  
Hence it suffices to check how many vertices can be deleted from each set $P(i) \cap Y_{ \overline{m}, \overline{n} }$ where $i \in \ZZ_8$ and where $\overline{m}$ and $\overline{n}$ 
are the residues of $m$ and $n \mod 8$ respectively.  
Figure \ref{SWNE} shows that we may delete a vertex from the Northeast or Southwest corners when there are two vertices in $P(i) \cap (Y_{m,n}-G_{m,n})$ 
within distance $4$ of that corner of $Y_{m,n}$
or when a vertex in $P(i) \cap (Y_{m,n}-G_{m,n}) $ is located at the corner of $Y_{m,n}$.
Figure   \ref{NWSE} illustrates that we are able to delete a vertex from the Northwest or Southeast corners of $Y_{m,n}$ when there are two vertices in $P(i) \cap (Y_{m,n}-G_{m,n})$ 
within distance $4$ of the corner of $Y_{m,n}$.

We have written code to count the number of corners that satisfy one of the three configurations in Figures \ref{SWNE} and \ref{NWSE}. 
We let $d(\overline{m},\overline{n},i) $ denote the number of such corners.  
Calculating the values $|P(i) \cap Y_{m,n}|- d(\overline{m},\overline{n},i)$ for each of the $8^3$ different 
cases takes only seconds on a personal computer, and the minimum of these values of $|P(i) \cap Y_{m,n}|- d(\overline{m},\overline{n},i)$  for each $\overline{m}$ and $\overline{n}$ are the ones recorded in the statement of the theorem.  
\end{proof}

\subsection{$(3,3)$  Broadcast Domination}
The optimal dominating set for $(3,3)$  broadcast domination is the preimage of the homomorphism $\phi: \ZZ \times \ZZ \rightarrow \ZZ_5$ with $\phi(x,y) = x+3y$.  
For $ i \in \ZZ_5$, let $P(i) = \phi^{-1}(i)$ denote its preimage.  
These sets $P(i)$ are precisely the same dominating sets as the regular dominating sets of $\ZZ \times \ZZ$ described in Chang's Ph.D. thesis \cite{Cha92}.
We shall see that Chang's bound for regular domination numbers also gives an upper bound for (3,3) broadcast domination numbers.  

\begin{figure}[H]
\begin{picture}(410,90)(-40,0)

	\linethickness{.1 mm}
	\multiput(130,10)(0,10){9}{\line(1,0){100}}
	\multiput(140,0)(10,0){9}{\line(0,1){100}}
	\put(180,50){\color{blue}{\circle*{5}}}
	\put(200,60){\color{blue}{\circle*{5}}}
	\put(220,70){\color{blue}{\circle*{5}}}
	\put(160,40){\color{blue}{\circle*{5}}}
	\put(140,30){\color{blue}{\circle*{5}}}
	\put(190,30){\color{blue}{\circle*{5}}}
	\put(170,20){\color{blue}{\circle*{5}}}
	\put(150,10){\color{blue}{\circle*{5}}}
	\put(210,40){\color{blue}{\circle*{5}}}
	\put(170,70){\color{blue}{\circle*{5}}}
	\put(150,60){\color{blue}{\circle*{5}}}
	\put(140,80){\color{blue}{\circle*{5}}}	
	\put(160,90){\color{blue}{\circle*{5}}}	
	\put(190,80){\color{blue}{\circle*{5}}}
	\put(210,90){\color{blue}{\circle*{5}}}
	\put(200,10){\color{blue}{\circle*{5}}}
	\put(220,20){\color{blue}{\circle*{5}}}
	\linethickness{0.4mm}
	\put(180,0){\line(0,1){100}}
	\put(130,50){\line(1,0){100}}

\end{picture}
\caption{ The dominating sets for (3,3) and (2,1) broadcast domination } \label{33P}
\end{figure}
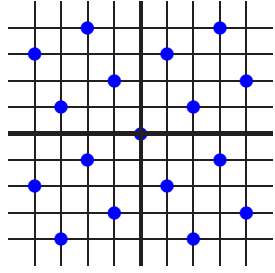 

Embed $G_{m,n}$ into $\ZZ \times \ZZ$ as the following set: \[ G_{m,n} = \{ (a,b) \in \ZZ \times \ZZ  \mid 1 \leq a \leq n  \text{ and } 1 \leq b \leq m \}, \]
and let $Y_{m,n} \cong G_{m+2,n+2} $ denote the neighborhood of $G_{m,n}$: \[Y_{m,n} = \{ (a,b) \in \ZZ \times \ZZ \mid 0 \leq a \leq n+1 \text{ and }  0 \leq b \leq m+1 \}. \]
Chang showed that there are $\left \lfloor \frac{(m+2)(n+2)}{5} \right \rfloor$ vertices in each $P(i) \cap Y_{m,n}$.  
Figure \ref{(3,3)Corners} shows that we can delete one vertex from each corner of $P(i) \cap (Y_{m,n}-G_{m,n})$ and still dominate $G_{m,n}$.  

\begin{figure}[H]
\begin{picture}(330,50)(80,30)
	\linethickness{.1 mm}
	\multiput(10,40)(0,10){5}{\line(1,0){50}}
	\multiput(10,30)(10,0){5}{\line(0,1){50}}
	\put(10,80){\color{blue}{\circle*{5}}}
	\put(10,80){\color{red}{\circle{7}}}
	
	\put(40,70){\color{blue}{\circle*{5}}}
	\put(20,60){\color{blue}{\circle*{5}}}
	\put(30,40){\color{blue}{\circle*{5}}}
         \put(50,50){\color{blue}{\circle*{5}}}
	\linethickness{0.4mm}
	\put(10,30){\line(0,1){50}}
	\put(10,80){\line(1,0){50}}
	\put(20,30){\line(0,1){40}}
	\put(20,70){\line(1,0){40}}

	\linethickness{.1 mm}
	\multiput(110,40)(0,10){5}{\line(1,0){50}}
	\multiput(110,30)(10,0){5}{\line(0,1){50}}
         \put(120,80){\color{red}{\circle{7}}}
	\put(120,80){\color{blue}{\circle*{5}}}
	\put(120,60){\color{black}{\circle{3}}}
         \put(120,60){\color{black}{\circle{5}}}
         \put(120,60){\color{black}{\circle{7}}}
	\put(150,70){\color{blue}{\circle*{5}}}
	\put(130,60){\color{blue}{\circle*{5}}}
	\put(140,40){\color{blue}{\circle*{5}}}
         \put(110,50){\color{blue}{\circle*{5}}}
         \put(110,50){\color{red}{\circle{7}}}

	\linethickness{0.4mm}
	\put(110,30){\line(0,1){50}}
	\put(110,80){\line(1,0){50}}
	\put(120,30){\line(0,1){40}}
	\put(120,70){\line(1,0){40}}
	\linethickness{.1 mm}
	\multiput(210,40)(0,10){5}{\line(1,0){50}}
	\multiput(210,30)(10,0){5}{\line(0,1){50}}
	\put(230,80){\color{blue}{\circle*{5}}}
	\put(210,70){\color{blue}{\circle*{5}}}
	\put(210,70){\color{red}{\circle{7}}}
	\put(240,60){\color{blue}{\circle*{5}}}
	\put(250,40){\color{blue}{\circle*{5}}}
         \put(220,50){\color{blue}{\circle*{5}}}
         \put(230,80){\color{red}{\circle{7}}}
         	\put(220,70){\color{black}{\circle{3}}}
         \put(220,70){\color{black}{\circle{5}}}
         \put(220,70){\color{black}{\circle{7}}}
	\linethickness{0.4mm}
	\put(210,30){\line(0,1){50}}
	\put(210,80){\line(1,0){50}}
	\put(220,30){\line(0,1){40}}
	\put(220,70){\line(1,0){40}}
	\linethickness{.1 mm}
	\multiput(310,40)(0,10){5}{\line(1,0){50}}
	\multiput(310,30)(10,0){5}{\line(0,1){50}}
	\put(340,80){\color{blue}{\circle*{5}}}
	\put(340,80){\color{red}{\circle{7}}}
	\put(320,70){\color{blue}{\circle*{5}}}
	\put(320,70){\color{red}{\circle{7}}}
        \put(330,70){\color{black}{\circle{7}}}
        \put(330,70){\color{black}{\circle{3}}}
        \put(330,70){\color{black}{\circle{5}}}
        \put(320,60){\color{black}{\circle{7}}}
        \put(320,60){\color{black}{\circle{3}}}
        \put(320,60){\color{black}{\circle{5}}}
	\put(350,60){\color{blue}{\circle*{5}}}
	\put(310,40){\color{blue}{\circle*{5}}}
        \put(310,40){\color{red}{\circle{7}}}
         \put(330,50){\color{blue}{\circle*{5}}}
	\linethickness{0.4mm}
	\put(310,30){\line(0,1){50}}
	\put(310,80){\line(1,0){50}}
	\put(320,30){\line(0,1){40}}
	\put(320,70){\line(1,0){40}}
	\linethickness{.1 mm}
	\multiput(410,40)(0,10){5}{\line(1,0){50}}
	\multiput(410,30)(10,0){5}{\line(0,1){50}}
	\put(450,80){\color{blue}{\circle*{5}}}
	\put(450,80){\color{red}{\circle{7}}}
	\put(430,70){\color{blue}{\circle*{5}}}
        \put(430,70){\color{red}{\circle{7}}}
	\put(410,60){\color{blue}{\circle*{5}}}
        \put(410,60){\color{red}{\circle{7}}}
	\put(420,40){\color{blue}{\circle*{5}}}
         \put(440,50){\color{blue}{\circle*{5}}}

        \put(440,70){\color{black}{\circle{7}}}
        \put(440,70){\color{black}{\circle{3}}}
        \put(440,70){\color{black}{\circle{5}}}
        \put(420,60){\color{black}{\circle{7}}}
        \put(420,60){\color{black}{\circle{3}}}
        \put(420,60){\color{black}{\circle{5}}}

	\linethickness{0.4mm}
	\put(410,30){\line(0,1){50}}
	\put(410,80){\line(1,0){50}}
	\put(420,30){\line(0,1){40}}
	\put(420,70){\line(1,0){40}}
\end{picture}
 \caption{5 Cases for (3,3) and (2,1) Broadcast domination } \label{(3,3)Corners}
\end{figure}
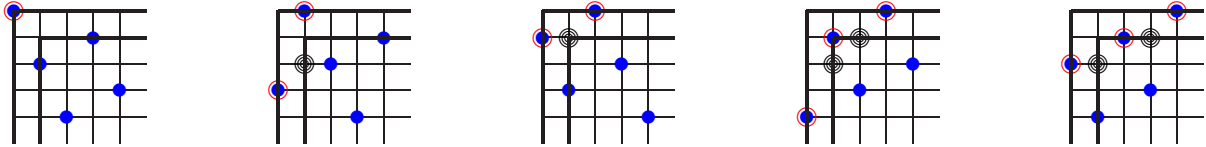

Thus we see that the optimal $(3,3)$ broadcast dominating set of $G_{m,n}$ is bounded above by \[ \gamma_{3,3}(G_{m,n}) \leq \left \lfloor \frac{(m+2)(n+2)}{5} \right \rfloor - 4 .\]
It then follows from the proof of Chang's conjecture given by Goncalves, Pinlou, Rao, and Thomass\'e \cite[Theorem 11]{GonPinRaoTho11} 
that this upper bound is in fact an equality when $n \geq m \geq 16$:
\begin{theorem} Let $ n \geq m \geq 16$.  Then the $(3,3)$ broadcast domination number of $G_{m,n}$ is the $(2,1)$ broadcast domination number, i.e.
 \[ \gamma_{3,3}(G_{m,n}) = \left \lfloor \frac{(m+2)(n+2)}{5} \right \rfloor - 4 .\]
\end{theorem}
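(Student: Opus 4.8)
The upper bound $\gamma_{3,3}(G_{m,n}) \le \lfloor (m+2)(n+2)/5 \rfloor - 4$ is already in hand, via the explicit construction $P(i)\cap Y_{m,n}$ together with the corner deletions of Figure \ref{(3,3)Corners}. Since Gon\c{c}alves, Pinlou, Rao, and Thomass\'e prove $\gamma_{2,1}(G_{m,n}) = \gamma(G_{m,n}) = \lfloor (m+2)(n+2)/5 \rfloor - 4$ for $n \ge m \ge 16$, the theorem reduces to the matching \emph{lower} bound $\gamma_{3,3}(G_{m,n}) \ge \lfloor (m+2)(n+2)/5 \rfloor - 4$; equivalently, since the upper bound already gives $\gamma_{3,3}\le\gamma_{2,1}$, it suffices to prove $\gamma_{3,3}(G_{m,n}) \ge \gamma_{2,1}(G_{m,n})$. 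The plan is to show that no $(3,3)$ broadcast dominating set can be smaller than a minimum regular dominating set, and then to quote the Gon\c{c}alves--Pinlou--Rao--Thomass\'e value.

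For the lower bound I would first record a convenient reformulation of the reception condition. Writing $b_k(u)=|\{v\in S : d(u,v)\le k\}|$ for the number of broadcasting vertices in the ball of radius $k$ about $u$, the identity $3-d=\sum_{k=0}^{2}\mathbf{1}[d\le k]$ for $d\in\{0,1,2\}$ shows that $S$ is a $(3,3)$ dominating set precisely when $b_0(u)+b_1(u)+b_2(u)\ge 3$ for every vertex $u$. The key structural input, emphasized by Figures \ref{33P} and \ref{(3,3)Corners}, is that the optimal sets $P(i)$ and the five admissible corner configurations are \emph{identical} to those for regular $(2,1)$ domination. I would then reprove the lower-bound half of the Gon\c{c}alves--Pinlou--Rao--Thomass\'e argument with the regular domination constraint $b_1(u)\ge 1$ replaced by the weaker reception constraint $b_0(u)+b_1(u)+b_2(u)\ge 3$: establish that the interior of $G_{m,n}$ forces the asymptotic density $1/5$, and that the four corners permit removing at most four vertices from an optimal $P(i)$-based packing, exactly as in Figure \ref{(3,3)Corners}.

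The hard part is precisely the adaptation of the interior density bound, because the $(3,3)$ reception condition is strictly weaker than regular domination at the local level: a vertex $u$ with $b_1(u)=0$ but three broadcasting vertices at distance exactly $2$ satisfies $b_0(u)+b_1(u)+b_2(u)=3$, so a $(3,3)$ dominating set need \emph{not} be a regular dominating set, and one cannot simply invoke a set containment. Worse, the fractional relaxation of the $(3,3)$ condition is feasible at the uniform density $3/19<1/5$, so the bound $1/5$ cannot follow from any averaging or linear-programming estimate and must use integrality in an essential way, exactly as in the computer-assisted core of \cite{GonPinRaoTho11}. I therefore expect the main obstacle to be showing that the inefficient ``distance-$2$ only'' coverage cannot be tiled below density $1/5$, so that the Gon\c{c}alves--Pinlou--Rao--Thomass\'e machinery carries over verbatim; an appealing alternative would be a local surgery that converts any $(3,3)$ dominating set into a regular dominating set of no larger cardinality by sliding, for each vertex with $b_1=0$, one of its distance-$2$ broadcasting vertices into its closed neighborhood, but controlling the interactions of such moves is itself delicate.
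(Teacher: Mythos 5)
Your upper-bound half coincides with the paper's: the sets $P(i)\cap Y_{m,n}$ together with the corner modifications of Figure \ref{(3,3)Corners} give $\gamma_{3,3}(G_{m,n})\le \left\lfloor (m+2)(n+2)/5\right\rfloor-4$, and this is all the paper establishes directly. The genuine gap in your proposal is the one you flag yourself: the matching lower bound is never proved, and neither of your two suggested routes is carried out. The surgery idea --- for each vertex $u$ with no broadcaster in its closed neighborhood, slide one of its distance-$2$ broadcasters into $N[u]$ --- is not an argument as stated: a single broadcaster can be the designated slider for several deficient vertices at once, and each move can create fresh deficiencies elsewhere, so without an injectivity or potential-function argument the process need not terminate in a regular dominating set of the same cardinality. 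Likewise, ``rerun the Gon\c{c}alves--Pinlou--Rao--Thomass\'e lower bound with the reception constraint $b_0(u)+b_1(u)+b_2(u)\ge 3$ in place of domination'' is a research program rather than a proof: their accounting is tied to each vertex of a dominating set covering at most $5$ vertices, whereas here a broadcaster distributes $19$ units of reception against a demand of $3$ per vertex, and, as your own fractional computation shows (uniform density $3/19<1/5$ is LP-feasible), no averaging of that kind can reach the density $1/5$, so the adaptation would have to exploit integrality in a way you do not supply.

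That said, your diagnosis is accurate and in fact sharper than the paper's own treatment. The paper's ``proof'' of the equality consists of the upper-bound construction followed by the assertion that equality follows from \cite[Theorem 11]{GonPinRaoTho11}; this silently assumes $\gamma_{3,3}(G_{m,n})\ge\gamma_{2,1}(G_{m,n})$, which --- as your example of a vertex served only by three broadcasters at distance exactly $2$ shows --- does not follow from any containment between the two families of dominating sets, and which the black-box statement of the Gon\c{c}alves--Pinlou--Rao--Thomass\'e theorem (a statement about classical domination only) does not supply. So relative to the paper you have correctly isolated exactly where the missing mathematics lies, and your reformulation of the $(3,3)$ condition and the non-containment observation are both correct; but since your proposal leaves that same inequality unproven, it is not a complete proof of the theorem, only a correct identification of what a complete proof would require.
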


\section{Broadcast Domination  Numbers and Open Problems } \label{section:algorithm}
 We have written a dynamic programming algorithm to find the optimal (t,r) broadcast dominating numbers of an $m \times n$ grid.
 The SAGE code for this algorithm and each calculation in this paper is available at the following website: \\
{ \small \tt https://cloud.sagemath.com/projects/26b983ae-a894-47c0-bd54-c73b071e555c/files/} \\
and the results of this algorithm for (2,2) and (3,1) broadcast domination are shown in the tables below.

\begin{figure}[H]
\centering
\begin{tabular}{|c|cccccccccc|}
	\hline
	m\textbackslash n&1&2&3&4&5&6&7&8&9&10\\  
	\hline
	1&1&&&&&&&&&\\
	2&2&2&&&&&&&&\\
	3&2&3&4&&&&&&&\\
	4&3&4&6&8&&&&&&\\
	5&3&5&7&10&11&&&&&\\
	6&4&6&8&12&14&16&&&&\\
	7&4&7&10&13&16&19&21&&&\\
	8&5&8&11&15&18&22&25&28&&\\
	9&5&9&12&17&20&24&28&32&35&\\
	10&6&10&14&19&22&27&30&35&39&42\\
	\hline
\end{tabular}\caption{(2,2) broadcast domination numbers for $m,n \leq 10$}
\end{figure}

\begin{figure}[H]
\centering
\begin{tabular}{|c|cccccccccc|}
	\hline
	m/n&1&2&3&4&5&6&7&8&9&10\\ 
	\hline
	1&1&&&&&&&&&\\
	2&1&1&&&&&&&&\\
	3&1&1&1&&&&&&&\\
	4&1&2&2&3&&&&&&\\
	5&1&2&2&3&4&&&&&\\
	6&2&2&2&4&4&4&&&&\\
	7&2&2&3&4&4&6&6&&&\\
	8&2&2&3&4&5&6&7&8&&\\
	9&2&3&3&5&6&6&7&8&9&\\
	10&2&3&4&5&6&7&8&9&10 &10 \\
	\hline
\end{tabular}\caption{(3,1) broadcast domination numbers for $m,n \leq 10$}
\end{figure}

\subsection{Conjectures and Open Problems}
In this subsection we state several open questions concerning the $(\ww,\tw)$  broadcast domination numbers of $m \times n$ grids.
\begin{enumerate}
 \item When are $m$ and $n$ sufficiently large the bounds given in this paper to be tight?  We conjecture that the bounds are tight when $n \geq m > 3 p$ where the optimal dominating set of 
$\ZZ \times \ZZ$ is the preimage of a homomorphism $\phi: \ZZ \times \ZZ \rightarrow \ZZ_p$.
 \item Can the techniques from Goncalves, et.al. \cite{GonPinRaoTho11}, be adapted to find the exact values of the $(\ww, \tw)$ broadcast domination numbers?
 \item For smaller grids, the $(\ww,\tw)$ broadcast domination is not yet well-understood and is much more unpredictable than it is for large grids.   
It would be interesting if one could develop a web app game to crowd-source the problem of finding optimal broadcast dominating sets for 
$m \times n$ grids when $m$ and $n$ are relatively small.  
 \item We have seen that $(3,3)$ and $(2,1)$ broadcast dominating sets are equal for large grids. We conjecture that the optimal $(\ww, \tw)$ and $(\ww-1, \tw-2)$ broadcast dominating sets of $\ZZ \times \ZZ$ are equal.
 If this is true, then all $(\ww, \tw)$ broadcast domination problems for large grids can be reduced to $(\ww,2)$ and $(\ww,1)$ broadcast domination problems.

\end{enumerate}


\subsection*{Acknowledgements}
The authors thank Shannon Talbott and Brian Johnson for helpful comments and discussions on this paper.  
All computations in this paper were conducted on the SageMath Cloud.  We thank the SAGE development team for their hard work in developing this excellent open-source mathematics software.

\end{document}